\DeclareMathOperator*{\esssup}{ess\,sup}
\definecolor{black}{rgb}{0.0, 0.0, 0.0}
\definecolor{red}{rgb}{1.0, 0.5, 0.5}
\newcommand{\margnote}[1]{
	\ifthenelse{\boolean{shownotes}}%
	{\marginpar{\raggedright\tiny\texttt{#1}}}%
	{}%
}
\newcommand{\hole}[1]{
	\ifthenelse{\boolean{shownotes}}%
	{\begin{center} \fbox{ \rule {.25cm}{0cm} \rule[-.1cm]{0cm}{.4cm}
				\parbox{.85\textwidth}{\begin{center} \texttt{#1}\end{center}} \rule
				{.25cm}{0cm}}\end{center}} {} }
\title[Global weak solutions to the nonlinear Vlasov--Fokker--Planck equation]
{Global existence of weak solutions to the nonlinear Vlasov--Fokker--Planck equation}
\author[Choi]{Young-Pil Choi}
\address[Young-Pil Choi]{\newline Department of Mathematics\newline
	Yonsei University, 50 Yonsei-Ro, Seodaemun-Gu, Seoul 03722, Republic of Korea}
\email{ypchoi@yonsei.ac.kr}
\author[Hwang]{Byung-Hoon Hwang}
\address[Byung-Hoon Hwang]{\newline Department of Mathematics Education\newline
	Sangmyung University, 20 Hongjimun 2-gil, Jongno-Gu, Seoul 03016, Republic of Korea}
\email{bhhwang@smu.ac.kr}
\author[Yoo]{Yeongseok Yoo}
\address[Yeongseok Yoo]{\newline Department of Mathematics\newline
	Yonsei University, 50 Yonsei-Ro, Seodaemun-Gu, Seoul 03722, Republic of Korea}
\email{mathysyoo@yonsei.ac.kr}
\numberwithin{equation}{section}
\newtheorem{theorem}{Theorem}[section]
\newtheorem{lemma}{Lemma}[section]
\newtheorem{proposition}{Proposition}[section]
\newtheorem{remark}{Remark}[section]
\newtheorem{definition}{Definition}[section]
\newcommand{\R}{\mathbb R}
\newcommand{\N}{\mathbb N}
\newcommand{\bbn}{\mathbb N}
\newcommand{\ls}{\lesssim}
\newcommand{\T}{\mathbb T}
\newcommand{\mc}{\mathcal C}
\newcommand{\bq}{\begin{equation}}
	\newcommand{\eq}{\end{equation}}
\newcommand{\e}{\epsilon}
\newcommand{\lt}{\left}
\newcommand{\rt}{\right}
\newcommand{\pa}{\partial}
\newcommand{\intr}{\int_{\R^N}}
\newcommand{\intrr}{\iint_{\T^N \times \R^N}}
\def\moverlay{\mathpalette\mov@rlay}
\def\mov@rlay#1#2{\leavevmode\vtop{%
		\baselineskip\z@skip \lineskiplimit-\maxdimen
		\ialign{\hfil$\m@th#1##$\hfil\cr#2\crcr}}}
\newcommand{\charfusion}[3][\mathord]{
	#1{\ifx#1\mathop\vphantom{#2}\fi
		\mathpalette\mov@rlay{#2\cr#3}
	}
	\ifx#1\mathop\expandafter\displaylimits\fi}
\newcommand*{\rom}[1]{\expandafter\@slowromancap\romannumeral #1@}
\begin{document}
	%%%%%%%%%%%%%%%%
	\allowdisplaybreaks
	
	\date{\today}
	
	\subjclass[]{}
	\keywords{Nonlinear Vlasov--Fokker--Planck equation, global weak solutions, velocity averaging, conservation laws, entropy inequality.}

	\begin{abstract} 
	In this paper, we study the nonlinear Fokker--Planck equation with fixed collision frequency. We establish the global-in-time existence of weak solutions to the equation with large initial data and show that our solution satisfies the conservation laws of mass, momentum, and energy, and Boltzmann's $H$-theorem.  
	\end{abstract}
	
	\maketitle \centerline{\date}

  \tableofcontents

	%%%%%%%%%%%%%%%%%%%%%%%%%%%%%%%%%%%%%%%%%%%%%%%%%%%%%%%%%%%%%%%%%%%%%%%%%%%%%%%%%5
	%
	%
	%                        Section: Introduction 
	%
	%
	%%%%%%%%%%%%%%%%%%%%%%%%%%%%%%%%%%%%%%%%%%%%%%%%%%%%%%%%%%%%%%%%%%%%%%%%%%%%%%%%%
	\setcounter{equation}{0}
	\section{Introduction}

	\subsection{Nonlinear Vlasov--Fokker--Planck equation}
The kinetic Fokker--Planck equation is one of the well-known variant models of the celebrated Boltzmann equation, describing the dynamics of particles in the presence of diffusion and friction. In \cite{Vi02}, a nonlinear form of the Fokker--Planck equation was presented where the equilibrium state is given by the local Maxwellian instead of the global one. With this modification, the equation fully satisfies the fundamental properties of the Boltzmann equation, the conservation laws of mass, momentum, and energy, and the $H$-theorem. On the other hand, the nonlinear Fokker--Planck equation can be regarded as a kinetic flocking model in the presence of noise, originating from the mean-field limit of the Motsch--Tadmor model \cite{MT11}, so it is sometimes called the Vlasov--Fokker--Planck (VFP) equation \cite{MV08} or the Fokker--Planck--Alignment model \cite{Shvpre}. In recent decades, VFP-type equations have been widely applied in various fields such as physics, astronomy, biology, economy, and social sciences \cite{DMPW09, F05, FPTT17, NPT10} for mathematical modeling. The aim of this paper is to study the Cauchy problem for the VFP equation with fixed collision frequency, i.e. the equation (20) of \cite{Vi02} with $\alpha=0$. To be more precise, let $f=f(x,v,t)$ be the distribution of particles on the phase space point $(x,v)\in \T^N\times\R^N$ with dimension $N\geq 1$ at time $t \in \R_+$. Then the evolution of $f$ is governed by the following kinetic equation:
	\begin{align}\label{eqn1.1}
		\partial_tf+v\cdot\nabla_xf =\mathcal{N}_{{\rm FP}}(f), 
	\end{align}
	with initial data
	\begin{align*}
		f(x,v,0)=: f_0(x,v).
	\end{align*}
	Here the first two terms describe the free transport of particles and the VFP operator $\mathcal{N}_{{\rm FP}}$ is given by
	\bq\label{q_op}
	\mathcal{N}_{{\rm FP}}(f) := \nabla_v\cdot\lt(T_f\nabla_vf+(v-u_f)f\rt),
	\eq
	where the collision frequency is set to be unity. The observable quantities $\rho_f = \rho_f(x,t)$, $ u_f =  u_f(x,t)$, and $T_f = T_f(x,t)$ represent the mass density, mean velocity, and temperature, defined by the following relations:
	\begin{align*}
		\rho_f=\int_{\mathbb{R}^N}f\, dv,\quad
		\rho_f u_f=\int_{\mathbb{R}^N}vf\, dv,\quad \mbox{and} \quad
		N\rho_f T_f=\int_{\mathbb{R}^N}|v-u_f|^2f\, dv,
	\end{align*}
	respectively.  Note that it is natural to consider $u_{f}$ and $T_{f}$ as
	\begin{equation*}
		u_{f}(x,t)
		=\begin{cases}
			\frac{\rho_f u_f(x,t)}{\rho_f(x,t)} & \text{if $\rho_f(x,t) \neq 0$}\cr
			0 & \text{if $\rho_f(x,t) = 0$}
		\end{cases}
		\quad \mbox{and} \quad 
		T_{f}(x,t)
		=\begin{cases}
			\frac{\rho_f T_f(x,t)}{\rho_f(x,t)} & \text{if $\rho_f(x,t) \neq 0$}\cr
			0 & \text{if $\rho_f(x,t) = 0$}
		\end{cases},
	\end{equation*}
	respectively.  
	
% The main purpose of the current work is to develop a global existence theory for the nonlinear FP equation \eqref{eqn1.1} with large initial data. 

 The study of the existence theory for the VFP equation is by now a well-established research topic. Some of the previous works on the existence theory and large-time behavior of solutions for the VFP equation can be summarized as follows. Since it has been studied by many authors, we do not intend to exhaust references in this paper. For the linear VFP equation, which is the equation \eqref{eqn1.1} with $T_f \equiv 1$ and $u_f \equiv 0$, the Vlasov--Fokker--Planck system is considered in \cite{Deg86} where rigorous treatments regarding the existence, uniqueness, and regularity are provided, and its Green function is addressed in  \cite{Bou93}. The convergence rate of solutions for the linear VFP equation with the external potential to equilibrium is also discussed in \cite{DV01, Her07, HN04}. We would like to remark that the operator \eqref{q_op} is also known as the hypo-coercive operator \cite{Vi09}, which is involved in the rate of convergence to equilibrium.  Concerning the VFP equation, when the temperature is given as a fixed positive constant, i.e. \eqref{eqn1.1} with $T_f \equiv 1$, the global classical solutions near the global Maxwellian and its large-time behavior are studied in \cite{C16}. Note that in that case, the equation satisfies the conservation of mass and momentum, but not the energy.  In \cite{LY21}, the global existence of regular solutions to the equation \eqref{eqn1.1} with non-constant collision frequency near the global Maxwellian and its asymptotic stability are provided. In \cite{MM16}, the ellipsoidal VFP equation is proposed to obtain the correct Prandtl number in the compressible Navier--Stokes asymptotics. Roughly speaking, the diffusion coefficient is replaced by a non-diagonal temperature tensor, but the model equation still satisfies the conservation laws and $H$-theorem. Later, the global-in-time existence and uniqueness of regular solutions to that ellipsoidal VFP equation near the global Maxwellian and its large-time behavior are obtained in \cite{SJ19}. We also refer to recent works \cite{AC23, FN22} for the developments of the numerical scheme for the simulations of the VFP operator and the study on the multi-species VFP kinetic model conserving mass, momentum, and energy as well as satisfying Boltzmann's $H$-theorem.

Despite those significant developments in the study of the global-in-time existence of regular solutions near the global Maxwellian, the literature on the existence of solutions to the VFP equations with large initial data remains extremely limited. In \cite{KMT13}, the existence of weak solutions for VFP equations with constant temperature was studied, and to the best of our knowledge, no other results have been reported so far. In the present work, for the first time, we obtain the global existence of weak solutions to \eqref{eqn1.1} with large initial data.

	%%%%%%%%%%%%%%%%%%%%%%%%%%%%%%%%%%%%%%%%%%%%%%%%%%%%%%%%%%%%%%%%%%%%%%%%%%%%%%%%%5
	%
	%
	%                        Section: Introduction 
	%
	%
	%%%%%%%%%%%%%%%%%%%%%%%%%%%%%%%%%%%%%%%%%%%%%%%%%%%%%%%%%%%%%%%%%%%%%%%%%%%%%%%%%
\subsection{Conservation laws and entropy inequality} Here, we present formal computations showing that the equation \eqref{eqn1.1} satisfies the conservation of mass, momentum, and energy, and Boltzmann's $H$-theorem. First, it is clear that
\begin{equation}\label{cancellation}
\intr (1, v, |v|^2) \mathcal{N}_{\rm FP}(f)\,dv = 0	
\end{equation}
due to
\[
\intr (v - u_f) f \,dv = 0,\quad \mbox{and}\quad \intr |v|^2f\,dv=N\rho_fT_f+\rho_f|u_f|^2.
\]
This gives the conservation laws of \eqref{eqn1.1}:
\[
\frac{d}{dt}\intrr (1, v, |v|^2) f\,dvdx = 0.	
\]
Moreover, if we define the local Maxwellian $M_f$ by
\[
M_f = \frac{\rho_f}{(2\pi T_f)^\frac{N}{2}} \exp\lt( - \frac{|v - u_f|^2}{2 T_f} \rt),
\]
then we find that the nonlinear Fokker--Planck operator $\mathcal{N}_{\rm FP}(f)$ can be written as
\[
\mathcal{N}_{\rm FP}(f) = \nabla_v \cdot \lt(T_f f \nabla_v \log \lt( \frac{f}{M_f}\rt) \rt).
\]
This together with \eqref{cancellation} gives
\[
\intr \mathcal{N}_{\rm FP}(f) \log M_f \,dv = 0,
\]
which yields
\begin{align}\label{dss_ent}
\begin{aligned}
\intr  \mathcal{N}_{\rm FP}(f) \log f\,dv &= \intr  \mathcal{N}_{\rm FP}(f) \log \lt( \frac{f}{M_f}\rt)\,dv \cr
&= - T_f \intr f \lt| \nabla_v \log \lt( \frac{f}{M_f}\rt)\rt|^2 dv \cr
&= -\frac1{T_f}\intr \frac1f |T_f \nabla_v f - (u_f -v)f|^2\,dv\cr
&\leq 0.
\end{aligned}
\end{align}
Thus, multiplying \eqref{eqn1.1} by $1+\ln f$ and integrating over $(x,v)$, we also have the $H$-theorem:
\[
\frac{d}{dt}\intrr f \log f\,dvdx = \intrr  \mathcal{N}_{\rm FP}(f) \log f\, dvdx \leq 0.
\]
	
		%%%%%%%%%%%%%%%%%%%%%%%%%%%%%%%%%%%%%%%%%%%%%%%%%%%%%%%%%%%%%%%%%%%%%%%%%%%%%%%%%5
	%
	%
	%                        Section: Introduction 
	%
	%
	%%%%%%%%%%%%%%%%%%%%%%%%%%%%%%%%%%%%%%%%%%%%%%%%%%%%%%%%%%%%%%%%%%%%%%%%%%%%%%%%%

	\subsection{Notations and main result}
For functions $f(x,v)$ and $g(x)$, when there is no risk of confusion, we will use  $\left\|f\right\|_{L^p}$ and $\left\|g\right\|_{L^p}$ to denote the usual $L^p(\mathbb{T}^N\times \mathbb{R}^N)$-norm and $L^p(\mathbb{T}^N)$-norm, respectively. We also use a weighted $L^p$-norm defined by
	\[
	\|f\|_{L_{q}^p}:=\left( \iint_{\mathbb{T}^N\times\mathbb{R}^N} \big(1+|v|^q\big)|f(x,v)|^p\, dxdv\right)^{\frac{1}{p}}\quad \mbox{if}\quad p\in [1,\infty)
	\]
 and 
	\[
	\|f\|_{L_q^\infty}:=\esssup_{(x,v)\in \mathbb{T}^N\times\mathbb{R}^N} \big(1+|v|^q\big)|f(x,v)|.
	\]
	Note that $\|f\|_{L_{q}^p}$ does not converge to $\|f\|_{L_q^\infty}$ as $p \to \infty$. 
	We denote a generic but not identical positive constant by $C$, and $C_{\alpha, \beta, \dots}$ or $C(\alpha, \beta, \dots )$ stands for a  positive constant depending on $\alpha$, $\beta$, and etc.
	
	Now, we introduce our notion of weak solutions to the equation \eqref{eqn1.1} and state our main theorem.
	\begin{definition}
		\label{def1.1}
		We define a weak solution $f$ to the equation \eqref{eqn1.1} if the following conditions are satisfied: 
		\begin{enumerate}[(i)]
			\item $f\in L^{\infty}(0,T;L^1\cap L^{\infty}(\mathbb{T}^N\times\mathbb{R}^N))$
			\item for all $\phi(x,v,t) \in \mc_c^2(\mathbb{T}^N \times \R^N \times[0,T])$ which has a compact support in $v$ 
			%:=\psi(x,t)\varphi(v)$, where $\psi(x,t)\in \mc_c^1(\mathbb{T}^N\times[0,T])$ and $\varphi(v)\in \mc_c^2(\mathbb{R}^N)$ 
			\mbox{with} $\phi(x,v,T)=0$,
			\begin{align*}
				&-\iint_{\mathbb{T}^N\times\mathbb{R}^N}f_0\phi_0\, dvdx -\int_{0}^{T}\iint_{\mathbb{T}^N\times\mathbb{R}^N}f\left( \partial_t\phi +v\cdot\nabla_x\phi+(u_f-v)\cdot\nabla_v\phi\right) \, dvdxdt\\
				& \quad =\int_{0}^{T}\iint_{\mathbb{T}^N\times\mathbb{R}^N} T_ff \Delta_v\phi\, dvdxdt.
			\end{align*}
		\end{enumerate}
	\end{definition}

	\begin{theorem}\label{main result}
		Let $T>0$. Suppose that the initial data $f_0\ge 0$ satisfies
\bq\label{ini_condi}
f_0 \in L^1_3\cap L^{\infty}(\mathbb{T}^N\times\mathbb{R}^N) \quad \mbox{and} \quad f_0 \log f_0 \in L^1(\T^N \times \R^N).
\eq
	 Then the equation \eqref{eqn1.1} admits at least one weak solution $f(x,v,t)\ge 0$  in the sense of Definition \ref{def1.1} such that
		\begin{enumerate} [(i)]
			\item  $\displaystyle
				\sup_{0\leq t\leq T}\|f(\cdot,\cdot,t)\|_{L^1 \cap L^{\infty} }\leq C_T \|f_0\|_{L^1 \cap L^{\infty}}$ and		 
			\item the third velocity-moment on $f$ is uniformly bounded:
			\begin{align*}
				\sup_{0\leq t\leq T}\iint_{{\mathbb{T}^N}\times\mathbb{R}^N} |v|^3 f(x,v,t)\, dvdx<\infty.
			\end{align*}
		\end{enumerate}
		Moreover, we have the following conservation laws and Boltzmann's $H$-theorem:
		\bq\label{conser_law}
		\intrr (1, v, |v|^2) f\,dvdx = \intrr (1, v, |v|^2) f_0\,dvdx
		\eq
		and
		\bq\label{h-theorem}
		\intrr f \log f\,dvdx \leq \intrr f_0 \log f_0\,dvdx.
		\eq
	\end{theorem}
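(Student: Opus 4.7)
The plan is a two-level approximation scheme: first regularize the nonlinear coefficients so that the diffusion is non-degenerate and the drift is bounded, close the regularized equation by a Schauder fixed point on the linearized Fokker--Planck problem, and then pass to the limit in the regularization by a velocity-averaging argument, monitoring the conservation laws and entropy throughout.

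\medskip

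\noindent\textbf{Step 1 (Regularization and linear Fokker--Planck).} For $\varepsilon>0$ and a spatial mollifier $\eta_\varepsilon$ on $\T^N$, I would replace the macroscopic coefficients by
\[
u^\varepsilon[f] := \frac{(\rho_f u_f)*_x\eta_\varepsilon}{\rho_f*_x\eta_\varepsilon + \varepsilon}, \qquad T^\varepsilon[f] := \varepsilon + \frac{(\rho_f T_f)*_x\eta_\varepsilon}{\rho_f*_x\eta_\varepsilon + \varepsilon},
\]
both smooth in $x$, bounded in terms of low-order moments of $f$, with $T^\varepsilon[f]\ge\varepsilon$. For a frozen $g$ in a convex subset of $L^1$, the linear equation $\pa_t f + v\cdot\nabla_x f = \nabla_v\cdot(T^\varepsilon[g]\nabla_v f + (v - u^\varepsilon[g])f)$ is a non-degenerate kinetic Fokker--Planck equation whose existence, uniqueness, and nonnegativity are classical. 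Uniformly in $\varepsilon$, I would then establish: (i) mass conservation and an $L^\infty$ bound via comparison with a Gaussian supersolution in $v$; (ii) moments up to order three, by testing with $(1+|v|^2)^{3/2}$ and closing by Gr\"onwall using the bound on $u^\varepsilon, T^\varepsilon$ in terms of the very moments being controlled; and (iii) the entropy dissipation of \eqref{dss_ent}, which in the regularized setting still controls $\iint f\log f$ because the exact cancellation \eqref{cancellation} is preserved at the approximation level. A Schauder fixed point for $g\mapsto f$ then produces an approximate solution $f^\varepsilon$.

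\medskip

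\noindent\textbf{Step 2 (Compactness and passage to $\varepsilon\to 0$).} The uniform estimates give $f^\varepsilon$ bounded in $L^\infty(0,T;L^1_3\cap L^\infty(\T^N\times\R^N))$, so up to a subsequence $f^\varepsilon\overset{\ast}{\rightharpoonup} f$ weakly-$\ast$ and weakly in every $L^p$. The heart of the argument is to upgrade this to strong $L^1_{t,x}$ convergence of the velocity moments $\rho_{f^\varepsilon}$, $\rho_{f^\varepsilon}u_{f^\varepsilon}$, and $\int|v|^2 f^\varepsilon\,dv$. I would apply an averaging lemma of DiPerna--Lions / Perthame--Souganidis type to the form $\pa_t f^\varepsilon + v\cdot\nabla_x f^\varepsilon = \nabla_v\cdot F^\varepsilon$ with $F^\varepsilon := T^\varepsilon[f^\varepsilon]\nabla_v f^\varepsilon + (v-u^\varepsilon[f^\varepsilon])f^\varepsilon$ bounded in a weighted $L^2_{t,x,v}$ space, using the third-moment bound to control tails in $v$. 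This yields strong $L^1_{t,x}$ convergence of the first three $v$-moments, hence of $u^\varepsilon[f^\varepsilon]$ and $T^\varepsilon[f^\varepsilon]$ a.e.\ on $\{\rho_f>0\}$; off of that set the products $(v-u_f)f$ and $T_f f$ vanish in the distributional sense because $f\equiv 0$ there. This suffices to identify the weak limit as a solution in the sense of Definition~\ref{def1.1}.

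\medskip

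\noindent\textbf{Step 3 (Conservation laws, \emph{H}-theorem, and the main obstacle).} Conservation \eqref{conser_law} is obtained by testing the regularized weak formulation (where cancellation \eqref{cancellation} is exact) against smooth truncations of $1,v,|v|^2$ and removing the truncation by the uniform third-moment estimate; the limits pass because the third moment remains bounded, controlling the tails in $v$. The \emph{H}-theorem \eqref{h-theorem} follows from the dissipation estimate together with weak lower semicontinuity of $\iint f\log f$, the latter supplied by the de la Vall\'ee Poussin criterion implied by the uniform moment and entropy bounds. The \textbf{main obstacle} is the degeneracy and singularity at the vacuum set $\{\rho_f=0\}$: the approximation must lift $T$ uniformly from zero while still preserving the structural identities that yield the conservation laws and the dissipation, and the nonlinear product $T^\varepsilon[f^\varepsilon]\nabla_v f^\varepsilon$ must be passed to the limit by pairing strong convergence of $T^\varepsilon[f^\varepsilon]$ (from averaging and the third moment) with only weak convergence of $\nabla_v f^\varepsilon$ (from the entropy dissipation). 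Orchestrating this balance is where the bulk of the technical work is expected to lie.
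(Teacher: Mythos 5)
Your overall architecture (regularize the coefficients, solve the linearized problem, pass to the limit via velocity averaging, identify the nonlinear products on $\{\rho_f>0\}$ and kill them on the vacuum set) matches the paper's. But there is a genuine gap in Step 1/Step 3: your assertion that ``the exact cancellation \eqref{cancellation} is preserved at the approximation level'' is false, and the rest of your entropy and conservation argument leans on it. For the regularized operator $\nabla_v\cdot(T^\varepsilon[f]\nabla_v f+(v-u^\varepsilon[f])f)$ one has $\int_{\R^N}v\,\mathcal{N}^{\rm reg}(f)\,dv=-\rho_f(u_f-u^\varepsilon[f])\neq 0$ and $\int_{\R^N}|v|^2\mathcal{N}^{\rm reg}(f)\,dv=2NT^\varepsilon[f]\rho_f-2\int|v-u^\varepsilon[f]|^2f\,dv+\dots\neq 0$, so the regularized equation conserves neither momentum nor energy; conservation can only be recovered in the limit, after the weak limits of $fu^\varepsilon[f]$ and $T^\varepsilon[f]f$ have been identified with $fu_f$ and $T_ff$ (this is how the paper argues, testing with $|v|^2\varphi_R$ and using the cancellation only for the limit $f$). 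More seriously, the entropy computation \eqref{dss_ent} does not close at the regularized level: one obtains instead (cf.\ \eqref{h-thm-main0})
\[
\iint f\ln f\,dvdx\Big|_0^t\le \int_0^t\int_{\T^N}\Bigl(-N\rho_f+\frac{1}{T^\varepsilon[f]}\int_{\R^N}|v-u^\varepsilon[f]|^2f\,dv\Bigr)dxds,
\]
and the right-hand side is not sign-definite. Showing it tends to zero is the crux of the $H$-theorem and is exactly why the paper uses a \emph{double} regularization: the $\delta$-layer keeps $T^{\e,\delta}$ bounded below by $C^{-1}\delta$ uniformly as $\e\to 0$, so that $N\rho_{f_\delta}T_{f_\delta}/T^\delta_{f_\delta}$ can be controlled and shown to converge to $N\rho_f$ on $\{\rho_f>0\}$, while the vacuum-set contribution is handled separately. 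With your single-parameter scheme the lower bound on $T^\varepsilon$ and the mollification scale degenerate simultaneously, and controlling the ratio $\rho_{f^\varepsilon}T_{f^\varepsilon}/T^\varepsilon[f^\varepsilon]$ near the set where $T_f$ vanishes but $\rho_f$ does not is precisely the difficulty your proposal does not address. Your appeal to weak lower semicontinuity of $\iint f\log f$ handles the left-hand side correctly, but without the remainder analysis the inequality \eqref{h-theorem} does not follow.

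A secondary, more minor point: the flux $F^\varepsilon=T^\varepsilon\nabla_vf^\varepsilon+(v-u^\varepsilon)f^\varepsilon$ is not obviously bounded in weighted $L^2_{t,x,v}$; the natural bound coming from the dissipation of the $L^{4/3}$-norm (namely $\int f^{-2/3}T|\nabla_vf|^2$) together with the third-moment estimate yields only an $L^{6/5}$ bound, which is still enough for the averaging lemma but should be stated as such. Your Schauder fixed point in place of the paper's Picard iteration with Feynman--Kac bounds is a legitimate alternative for the existence of the regularized solution.
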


	\begin{remark} By using Young's inequality, $f\in L^1_3(\T^N \times \R^N)$ implies $f\in L^1_\alpha(\T^N \times \R^N)$ for any $\alpha \in [0,3]$.
	\end{remark}

 \begin{remark}
     The second condition in \eqref{ini_condi} is only required for the $H$-theorem \eqref{h-theorem}.
 \end{remark}
			%%%%%%%%%%%%%%%%%%%%%%%%%%%%%%%%%%%%%%%%%%%%%%%%%%%%%%%%%%%%%%%%%%%%%%%%%%%%%%%%%5
	%
	%
	%                        Section: Introduction 
	%
	%
	%%%%%%%%%%%%%%%%%%%%%%%%%%%%%%%%%%%%%%%%%%%%%%%%%%%%%%%%%%%%%%%%%%%%%%%%%%%%%%%%%
	\subsection{Strategy of the proof} The proof of Theorem \ref{main result} is mainly divided into three main steps. 
	
	We first introduce a regularized equation associated with \eqref{eqn1.1}. To remove any singularities in the mean velocity and temperature, motivated from \cite{CY20}, we regularize them by introducing two regularization parameters $\e$ and $\delta$. We emphasize that there are various ways to regularize the equation, but it should be chosen appropriately so that the solutions recovered from the regularization satisfy the desired conservation laws \eqref{conser_law} and entropy inequality \eqref{h-theorem}. For instance, in our work, the temperature $T_f$ is regularized in this way:
		$$
		T_f^{\e, \delta}(x,t):=
		\frac{\Phi_f^{\e, \delta} + \delta^2}{N\rho_f*\theta_\epsilon+ \delta\big( 1+\Phi_f^{\e, \delta} \big) },
		$$
		with
		$$
		\Phi_f^{\e, \delta}
		:=\left( N\rho_fT_f+\rho_f|u_f|^2\right) * \theta_\epsilon - \frac{\left| \rho_fu_f*\theta_\epsilon\right|^2}{\rho_f*\theta_\epsilon+\delta\big( 1+\left| \rho_fu_f*\theta_\epsilon\right|^2\big)}.
		$$
Here $\Phi_f$ is designed to take advantage of the strong convergence of the moments of the regularized solutions, see \eqref{novel2} below for more details.

 After regularizing the equation, we do not have any singular terms in the equation. However, it still has strongly nonlinear terms, and thus the existence of solutions to that regularized equation is not clear. For that reason, in the second step, we develop an existence theory for the regularized equation. For this, we construct approximate solutions to that regularized equation. By obtaining appropriate uniform bound estimates of the sequence of approximate solutions, we show that the sequence is Cauchy in $L^\infty(0, T;L^2_q(\T^N \times \R^N))$ for some positive constant $q$ large enough. Thus, we have a limit function $f_{\e,\delta}$ from the approximate solutions, and we provide that the limit function is the weak solution to that regularized equation.

 Finally, we send the regularization parameters $\e,\delta$ to zero to recover our main equation \eqref{eqn1.1}. Moreover, we show that our constructed weak solutions satisfy the conservation laws and the $H$-theorem. This final step involves the weak and strong compactness via the velocity-moment estimate and velocity averaging lemma. We would like to remark that double regularization is not required for the existence of solutions. In our strategy, we need it to show that our constructed weak solutions satisfy the $H$-theorem. Precisely, we first send $\e \to 0$ and then $\delta \to 0$. Clearly, the temperature $T_f$ would be zero in some region of $\T^N \times (0, T)$ with positive measure, and it causes some technical difficulties in analyzing the entropy inequality, see \eqref{dss_ent} for instance. To overcome that difficulty, we regularize the temperature so that even after sending $\e \to 0$, it is still bounded from below by some positive constant $C(\delta)$. We then bound the regularized temperature, which is now independent of $\e$, from below and above appropriately to pass to the limit $\delta \to 0$.
 
 			%%%%%%%%%%%%%%%%%%%%%%%%%%%%%%%%%%%%%%%%%%%%%%%%%%%%%%%%%%%%%%%%%%%%%%%%%%%%%%%%%5
	%
	%
	%                        Section: Introduction 
	%
	%
	%%%%%%%%%%%%%%%%%%%%%%%%%%%%%%%%%%%%%%%%%%%%%%%%%%%%%%%%%%%%%%%%%%%%%%%%%%%%%%%%%
	\subsection{Organization of the paper}
	
	The rest of this paper is organized as follows. In Section \ref{sec:reg}, we introduce the regularized equation associated to our main equation \eqref{eqn1.1}. We also introduce a sequence of solutions approximating the regularized equation. We then provide some uniform bounds of the approximate solutions. Section \ref{sec:reg_weak} is devoted to showing the existence of weak solutions to the regularized equation. We also present some uniform bound estimates of the solutions with respect to the regularization parameters. Finally, in Section \ref{sec:main}, we provide the details of the proof for Theorem \ref{main result}.
		%%%%%%%%%%%%%%%%%%%%%%%%%%%%%%%%%%%%%%%%%%%%%%%%%%%
	%
	%
	%
	%
	%
	%
		%%%%%%%%%%%%%%%%%%%%%%%%%%%%%%%%%%%%%%%%%%%%%%%%%%%
	
	\section{A regularized equation}\label{sec:reg}
	We need to regularize the equation \eqref{eqn1.1} to remove the singularity in the local alignment force. Let us use a mollifier $\theta_\epsilon(x)=\epsilon^{-N}\theta({\frac{x}{\epsilon}})$ with $\e \in (0,1)$ to regularize the mean velocity $u_f$ and the temperature $T_f$, which satisfies
	\begin{align*}
		\int_{\mathbb{T}^N}\theta(x)\, dx=1.
	\end{align*}
	\\
	Then the regularized equation of \eqref{eqn1.1} is defined as
	\begin{align}
		\label{eqn2.1}
		\partial_tf_{\e,\delta}+v\cdot\nabla_xf_{\e,\delta}  =\nabla_v\cdot\lt(T_{f_{\e,\delta}}^{\e, \delta}\nabla_vf_{\e,\delta}+(v-u_{f_{\e,\delta}}^\epsilon)f_{\e,\delta}\rt)
	\end{align}
	with regularized initial data
\bq\label{reg_ini}
		f_{\e,\delta}(x,v,0)=: f_{0,\epsilon}(x,v).
\eq
	Here the regularized mean velocity $u_f^\epsilon$ and temperature $T_f^{\e, \delta}$ are given by
	\begin{align*}
		u_f^\epsilon(x,t):=\frac{\left(\rho_f u_f\right)*\theta_\epsilon}{\rho_f*\theta_\epsilon+\epsilon\big(1+|(\rho_f u_f)*\theta_\epsilon|^2\big)} \quad \mbox{and} \quad 	T_f^{\e, \delta}(x,t):=
		\frac{\Phi_f^{\e, \delta} + \delta^2}{N\rho_f*\theta_\epsilon+ \delta\big( 1+\Phi_f^{\e, \delta} \big) },
	\end{align*}
	with
	\begin{align*}%\label{novel}
		\Phi_f^{\e, \delta}
		:=\left( N\rho_fT_f+\rho_f|u_f|^2\right) * \theta_\epsilon - \frac{\left| \rho_fu_f*\theta_\epsilon\right|^2}{\rho_f*\theta_\epsilon+\delta\big( 1+\left| \rho_fu_f*\theta_\epsilon\right|^2\big)  },
	\end{align*}
respectively, where $\delta \in (0,1)$,
	\begin{align*}
		\rho_f=\int_{\mathbb{R}^N}f\, dv,\quad
		\rho_f u_f=\int_{\mathbb{R}^N}vf\, dv,\quad
		\mbox{and}\quad
		N\rho_f T_f=\int_{\mathbb{R}^N}|v-u_f|^2f\, dv.
	\end{align*}
	The regularized initial data $f_{0,\e}$ is given by
\bq\label{f_0ep}
		f_{0,\epsilon}=\left\lbrace \eta_\epsilon* f_0\right\rbrace (x,v)+\epsilon{e^{-|v|^2}}
\eq
for all $\delta \in (0,1)$, where $\eta$ is a standard mollifier and $\eta_\epsilon(x,v)=\epsilon^{-2N}\eta({\frac{x}{\epsilon}},{\frac{v}{\epsilon}})$.
	
	\begin{remark}\label{rem_poT}Note that if $f \geq 0$, then
	\[
	\left| \rho_fu_f*\theta_\epsilon\right|^2 \leq (\rho_f*\theta_\epsilon) |( \rho_f|u_f|^2)  * \theta_\epsilon|
	\]
thanks to H\"older's inequality, and thus 
	\[
	\Phi_f^{\e, \delta} \geq N \rho_f T_f * \theta_\epsilon \geq 0.
	\]
	This, in particular, implies $T_f^{\e, \delta} \geq 0$.
	\end{remark}

For the regularized equation \eqref{eqn2.1}, we obtain the following proposition on the global-in-time existence of weak solutions.
	\begin{proposition}
		\label{prop2.1}
		Let $T>0$.  Suppose that the initial data $f_0$ satisfies \eqref{ini_condi} and $f_{0,\e}$ is given by \eqref{f_0ep}. Then there exists a weak solution $f_{\epsilon,\delta}$ of the regularized equation \eqref{eqn2.1}--\eqref{reg_ini} on the interval $[0, T]$ in the sense of Definition \ref{def1.1}. Furthermore, there exists $C>0$ independent of $\epsilon$ and $\delta$ such that 
				\begin{align*}
			\sup_{0\leq t\leq T}\|f_{\e,\delta}(t)\|_{L^1 \cap L^\infty}\leq \|f_{0,\e}\|_{L^1 \cap L^\infty} e^{NT}
		\end{align*}
		and
		\begin{align*}
			\sup_{0\leq t\leq T}\iint_{\mathbb{T}^N\times\mathbb{R}^N}|v|^3 f_{\e,\delta}(x,v,t)\, dvdx
			\leq C.
		\end{align*}
	\end{proposition}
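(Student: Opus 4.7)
The plan is to construct $f_{\e,\delta}$ as the limit of a Picard iteration that linearizes \eqref{eqn2.1}. I fix $\e,\delta\in(0,1)$, set $f^0(x,v,t)\equiv f_{0,\e}(x,v)$, and for $n\geq 0$ define $f^{n+1}$ as the solution of the linear Fokker--Planck equation
\begin{align*}
\partial_t f^{n+1}+v\cdot\nabla_x f^{n+1}=\nabla_v\cdot\bigl(T^{\e,\delta}_{f^n}\nabla_v f^{n+1}+(v-u^\e_{f^n})f^{n+1}\bigr),
\end{align*}
with $f^{n+1}|_{t=0}=f_{0,\e}$. A direct computation from the explicit formulas gives $|u^\e_{f^n}|\leq(2\e)^{-1}$, while Remark \ref{rem_poT} together with the structure of the denominator of $T^{\e,\delta}_f$ gives $\delta\leq T^{\e,\delta}_{f^n}\leq \delta^{-1}$. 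Combined with the spatial smoothness produced by $\theta_\e$, this makes the linear problem a uniformly non-degenerate (in $v$) parabolic-transport equation with smooth, bounded coefficients, whose global well-posedness, positivity, and regularity follow from classical theory (see \cite{Deg86}).

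For each iterate I would establish three a priori estimates. Integrating in $(x,v)$ gives mass preservation $\|f^{n+1}(t)\|_{L^1}=\|f_{0,\e}\|_{L^1}$, since the collision operator is a full $v$-divergence and the coefficients are $v$-independent. Rewriting the equation in non-divergence form produces an absorbing zeroth-order term $Nf^{n+1}$, so the parabolic maximum principle applied to $e^{-Nt}f^{n+1}$ yields $\|f^{n+1}(t)\|_{L^\infty}\leq\|f_{0,\e}\|_{L^\infty}e^{Nt}$ together with non-negativity. For the third-order velocity moment, testing against $|v|^3$, using $\nabla_v|v|^3=3v|v|$ and $\Delta_v|v|^3=3(N+1)|v|$, and bounding the lower-order moments by interpolation against the $L^1\cap L^\infty$ estimate gives a Gronwall inequality
\begin{align*}
\frac{d}{dt}\iint_{\T^N\times\R^N}|v|^3 f^{n+1}\,dvdx\leq C_{\e,\delta}\Bigl(1+\iint_{\T^N\times\R^N}|v|^3 f^{n+1}\,dvdx\Bigr),
\end{align*}
hence a uniform-in-$n$ third-moment bound on $[0,T]$.

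To extract a limit I show that $\{f^n\}$ is Cauchy in $L^\infty(0,T;L^2_q(\T^N\times\R^N))$ for some $q$ sufficiently large. Setting $g^n:=f^{n+1}-f^n$, the difference solves a linear Fokker--Planck equation driven by the sources $\nabla_v\cdot[(T^{\e,\delta}_{f^n}-T^{\e,\delta}_{f^{n-1}})\nabla_v f^n]$ and $\nabla_v\cdot[(u^\e_{f^n}-u^\e_{f^{n-1}})f^n]$. The decisive point is that, thanks to the $\e,\delta$-regularizations, the maps $f\mapsto u^\e_f$ and $f\mapsto T^{\e,\delta}_f$ are globally Lipschitz from $L^2_q$ into $L^\infty(\T^N)$ with constants depending only on $\e,\delta$: the denominators are bounded below, while the numerators are polynomials in $\theta_\e$-convolved moments of $f$ that are $L^2_q$-Lipschitz in $f$ when $q$ is large enough. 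Testing against $(1+|v|^q)g^n$, using Young's inequality on the $T^{\e,\delta}$ term, and invoking the iterate bounds produces $\|g^n(t)\|_{L^2_q}^2\leq C_{\e,\delta,T}\int_0^t\|g^{n-1}(s)\|_{L^2_q}^2\,ds$, which iterates to a contraction on $[0,T]$. The limit $f_{\e,\delta}$ inherits the uniform iterate bounds, and because strong $L^2_q$-convergence passes through the Lipschitz coefficient maps, it satisfies the weak formulation of Definition \ref{def1.1}.

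The $\e,\delta$-\emph{independent} third-moment bound stated in the proposition must be obtained directly on $f_{\e,\delta}$ rather than on the iterates. For this I would exploit the fact that the regularized formulas for $u^\e_f$ and $T^{\e,\delta}_f$ are designed to be compatible with energy conservation, so that $\int T^{\e,\delta}_{f_{\e,\delta}}\rho_{f_{\e,\delta}}\,dx$ and $\int u^\e_{f_{\e,\delta}}\cdot(\rho_{f_{\e,\delta}}u_{f_{\e,\delta}})\,dx$ are dominated by $\iint|v|^2 f_{\e,\delta}\,dvdx$ with constants independent of $\e,\delta$; the $|v|^2$ moment is then closed against itself via an approximate energy identity, and the bound $|v|^3\lesssim(1+|v|)|v|^2$ combined with the $L^\infty$ estimate upgrades the energy control to the uniform third-moment bound. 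The hard part of the plan is this Cauchy estimate: quantifying the Lipschitz dependence of $T^{\e,\delta}_f$ on $f$ is delicate because $T^{\e,\delta}$ is a quotient whose numerator $\Phi^{\e,\delta}_f$ is quadratic in moments of $f$, and the $\delta$-regularization in its denominator is introduced precisely to make the iteration scheme close.
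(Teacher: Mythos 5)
Your overall architecture (Picard iteration on the linearized equation, Cauchy estimate in $L^\infty(0,T;L^2_q)$, separate uniform third-moment bound) matches the paper's, but two of your key steps fail as stated. First, the claimed lower bound $T^{\e,\delta}_{f^n}\geq\delta$ is false: from the definition, if $\Phi^{\e,\delta}_{f^n}$ is small while $\rho_{f^n}*\theta_\e$ is of order one, then $T^{\e,\delta}_{f^n}\approx \delta^2/(N\rho_{f^n}*\theta_\e+\delta)\ll\delta$. A positive lower bound $c_{\e,\delta}$ does hold, but it is a nontrivial fact: it uses the Gaussian floor $\e e^{-|v|^2}$ built into $f_{0,\e}$ to get $\rho_{f^n}\geq c_{\e,\delta}$ uniformly in $n$ (via the stochastic/characteristic representation), then the Perthame--Pulvirenti inequality $\rho_f\leq C\|f\|_{L^\infty}T_f^{N/2}$ to convert this into a lower bound on $\rho_{f^n}T_{f^n}*\theta_\e$ and hence on $\Phi^{\e,\delta}_{f^n}$ and $T^{\e,\delta}_{f^n}$ (Lemmas \ref{lemma2.1}(iii) and \ref{lem_low}). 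Without this lower bound your linear problems are degenerate parabolic, and — more importantly — your Cauchy estimate cannot absorb the cross terms: the difference equation's source $\nabla_v\cdot[(T^{\e,\delta}_{f^n}-T^{\e,\delta}_{f^{n-1}})\nabla_vf^n]$ produces, after integration by parts, a term in $\nabla_v(f^{n+1}-f^n)$ that must be dominated by the dissipation $-c_{\e,\delta}\|\nabla_v(f^{n+1}-f^n)\|_{L^2_q}^2$. Relatedly, both source terms in the difference equation carry $\nabla_vf^n$, so closing the contraction requires a uniform-in-$n$ bound on $\|\nabla_vf^n\|_{L^2_q}$ (Lemma \ref{lq2}), which you do not establish and which again rests on the lower bound for $T^{\e,\delta}_{f^n}$.

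Second, your route to the $\e,\delta$-independent third-moment bound does not work. The final step, upgrading control of $\iint|v|^2f_{\e,\delta}$ to control of $\iint|v|^3f_{\e,\delta}$ via $|v|^3\lesssim(1+|v|)|v|^2$ and the $L^\infty$ bound, is invalid: the $L^\infty$ bound controls no decay in $v$, and a finite second moment plus $L^\infty$ does not imply a finite third moment. The paper instead runs a Gr\"onwall argument directly on $\iint|v|^3f_{\e,\delta}$, and the genuinely $\e,\delta$-uniform ingredient is the technical estimate $\sup_y\int_{\T^N}\theta_\e(x-y)\,\rho_{f_{\e,\delta}}(x)/(\theta_\e*\rho_{f_{\e,\delta}})(x)\,dx\leq C$ (Lemma \ref{lem4.4}, from \cite{KMT14}), which combined with the Jensen-type bounds \eqref{4.7}--\eqref{4.8} gives $\int|u^\e_{f_{\e,\delta}}|^3\rho_{f_{\e,\delta}}\,dx+\int(T^{\e,\delta}_{f_{\e,\delta}})^{3/2}\rho_{f_{\e,\delta}}\,dx\leq C\iint(1+|v|^3)f_{\e,\delta}\,dvdx$ with $C$ independent of $\e,\delta$. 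Some version of this lemma is indispensable and is absent from your plan.
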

	
\begin{remark}Due to the regularized terms $T^{\e,\delta}_f$, $u^\e_f$, and the regularized initial data, we can also have the global existence of regular solutions $f_{\e,\delta}$ to the equation \eqref{eqn2.1} for fixed $\e,\delta > 0$. However, we can only obtain the uniform bound estimate of solutions $f_{\e,\delta}$ in $L^1\cap L^\infty$ space. 
\end{remark}	
	
	In order to prove Proposition \ref{prop2.1}, we introduce another equation, approximating the regularized equation \eqref{eqn2.1}, in the next subsection.
	%%%%%%%%%%%%%%%%%%%%%%%%%%%%%%%%%%%%%%%%%%%%%%%%%%%
	%
	%
	%
	%
	%
	%
		%%%%%%%%%%%%%%%%%%%%%%%%%%%%%%%%%%%%%%%%%%%%%%%%%%%
	\subsection{A regularized and linearized equation}
	To show the existence of weak solutions to the regularized equation \eqref{eqn2.1}, we construct approximate solutions by solving the following equation:
	\begin{align}
		\label{eqn2.2}
		\partial_tf^{n+1}_{\e,\delta}+v\cdot\nabla_xf^{n+1}_{\e,\delta}  =\nabla_v\cdot\lt(T_{f^n_{\e,\delta}}^{\epsilon, \delta}\nabla_vf^{n+1}_{\e,\delta}+\lt(v-u_{f^n_{\e,\delta}}^{\epsilon}\rt)f^{n+1}_{\e,\delta}\rt)
	\end{align}
	with the initial data and first iteration step:
\[
f^n_{\e,\delta}(x,v,t)|_{t=0}=f_{0,\e}(x,v) \quad \mbox{for all} \quad n \geq 1, \quad (x,v) \in \T^N \times \R^N
\]
and
\[
f_{\e,\delta}^0(x,v,t)=f_{0,\e}(x,v) \quad (x,v,t) \in \T^N \times \R^N \times \R_+.
\]

Our main goal of this subsection is to provide the global-in-time existence and uniqueness of the solution of the regularized and linearized equation \eqref{eqn2.2} and uniform-in-$n$ estimates of the solution. For notational simplicity, we shall drop the subscript $\e$ and $\delta$, and denote $f^n_{\e,\delta}$ by $f^n$.
	\begin{proposition}
		\label{prop2.2}
		Let $T>0$. For any $n\in \mathbb{N}$, the regularized and linearized equation \eqref{eqn2.2} admits  a unique solution $f^n\in L^{\infty}(0,T;L_q^{\infty}(\mathbb{T}^N\times\mathbb{R}^N))$ satisfying
		\begin{itemize}
		 \item[(i)]  The distribution $f^n$ is positive:
		$$
f^n(x,v,t)>0 \quad \mbox{for all } n\in \mathbb{N}.	
$$	 
		\item[(ii)]   There exist positive constants $C_T$ and $C_{\e,\delta,T}$ such that 
				\[
		\sup_{0 \leq t \leq T}\sup_{n \in \N}\|f^n(t)\|_{L^1 \cap L^\infty} \leq C_T\|f_{0,\e}\|_{L^1 \cap L^\infty}
		\]
		and
				\[
		\sup_{0 \leq t \leq T}\sup_{n \in \N}\|f^n(t)\|_{L^\infty_q} \leq C_{\e,\delta,T}\|f_{0,\e}\|_{L^\infty_q}.
		\]
		\item[(iii)]   There exists $c_{\e,\delta} > 0$  such that 
		\[
			\inf_{(x,t) \in \T^N \times (0,T)} \inf_{n \in \N}\rho_{f^n}(x,t) \geq c_{\e,\delta}.
		\]
		\end{itemize}
	\end{proposition}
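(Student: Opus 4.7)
The proof will go by induction on $n$; the base case $n=0$ is immediate since $f^0=f_{0,\e}$ inherits all the required properties directly from \eqref{f_0ep}. For the inductive step, I regard \eqref{eqn2.2} as a \emph{linear} Kolmogorov--Fokker--Planck equation whose coefficients $T^{\e,\delta}_{f^n}$ and $u^\e_{f^n}$ depend only on $f^n$. The very form of the regularizations in Section \ref{sec:reg} enforces, by AM--GM applied to the denominators, the $n$-free pointwise upper bounds $|u^\e_{f^n}|\leq (2\e)^{-1}$ and $T^{\e,\delta}_{f^n}\leq C\delta^{-1}$, while the mollifier $\theta_\e$ makes both coefficients $C^\infty$ in $x$ and continuous in $t$. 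Since $T^{\e,\delta}_{f^n}$ is strictly positive by Remark \ref{rem_poT}, existence and uniqueness of a smooth, bounded solution $f^{n+1}$ then follow from classical linear theory for ultraparabolic equations with bounded smooth coefficients (parametrix construction, Galerkin scheme, or a heat-kernel perturbation argument), and strict positivity (i) follows from the strong parabolic maximum principle combined with the strictly positive initial datum $f_{0,\e}\geq \e e^{-|v|^2}$.

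For the uniform bounds (ii) I would proceed estimate-by-estimate, each relying only on the pointwise upper bounds of $u^\e_{f^n}$ and $T^{\e,\delta}_{f^n}$ noted above. Integrating \eqref{eqn2.2} over $\T^N\times\R^N$ eliminates the $v$-divergence on the right-hand side (the required decay as $|v|\to\infty$ is supplied by the $L^\infty_q$ bound), giving mass conservation $\|f^{n+1}(t)\|_{L^1}=\|f_{0,\e}\|_{L^1}$. Expanding $\nabla_v\cdot((v-u^\e_{f^n})f^{n+1})=Nf^{n+1}+(v-u^\e_{f^n})\cdot\nabla_v f^{n+1}$ and applying the weak maximum principle to $e^{-Nt}f^{n+1}$ yields $\|f^{n+1}(t)\|_{L^\infty}\leq e^{NT}\|f_{0,\e}\|_{L^\infty}$, uniformly in $n$. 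For the weighted $L^\infty_q$ bound I would apply the same maximum-principle idea to $(1+|v|^q)\,e^{-C_{\e,\delta}t}f^{n+1}$, using $|v\cdot\nabla_v(1+|v|^q)|\ls (1+|v|^q)$ together with the $\e,\delta$-dependent bounds on the coefficients to choose $C_{\e,\delta}$ large enough to dominate all lower-order commutator terms produced by moving the weight past the operator.

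The principal obstacle is the density lower bound (iii), which requires breaking an apparent circular dependence: a uniform-in-$n$ lower bound on $T^{\e,\delta}_{f^n}$ requires upper bounds on $\rho_{f^n}*\theta_\e$ and on $\Phi^{\e,\delta}_{f^n}\leq (\int|v|^2 f^n\,dv)*\theta_\e$, and the latter in turn needs a uniform second moment of $f^n$. Choosing $q>N+2$ in (ii), the $L^\infty_q$ estimate just established supplies this moment bound, so $c_{\e,\delta}\leq T^{\e,\delta}_{f^n}\leq C\delta^{-1}$ uniformly in $n$. With the coefficients of \eqref{eqn2.2} now bounded and the diffusion in $v$ uniformly nondegenerate, I would use the Feynman--Kac representation $f^{n+1}(x,v,t)=e^{Nt}\,E[f_{0,\e}(\widetilde X_0,\widetilde V_0)]$ along the backward SDE $d\widetilde X_s=\widetilde V_s\,ds$, $d\widetilde V_s=-(\widetilde V_s-u^\e_{f^n}(\widetilde X_s,s))\,ds+\sqrt{2T^{\e,\delta}_{f^n}(\widetilde X_s,s)}\,dB_s$, combined with the pointwise lower bound $f_{0,\e}(y,w)\geq \e e^{-|w|^2}$ and a Gaussian-type moment estimate for $\widetilde V_0$ (whose mean and variance are uniformly controlled in $n$) to obtain a pointwise lower bound $f^{n+1}(x,v,t)\geq c(\e,\delta)>0$ for $|v|\leq 1$. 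Integrating in $v$ then yields $\rho_{f^{n+1}}(x,t)\geq c_{\e,\delta}$ uniformly in $n$, $x$, and $t$. An equivalent route avoiding stochastic machinery is to invoke a quantitative lower bound for the hypoelliptic fundamental solution of \eqref{eqn2.2} on compact sets via a parametrix; either way, this quantitative lower bound is the bulk of the technical work.
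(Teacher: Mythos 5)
Your proposal is correct and shares its essential engine with the paper's proof: both hinge on the Feynman--Kac representation $f^{n+1}(x,v,t)=e^{Nt}\,\mathbb{E}[f_{0,\e}(Z^{n+1}(0))]$ along the backward SDE, combined with $f_{0,\e}\geq \e e^{-|v|^2}$, to get strict positivity and the density lower bound (iii). The differences are in the periphery. For (ii) the paper extracts the $L^1\cap L^\infty$ and $L^\infty_q$ bounds from the same stochastic representation, controlling $|v|$ by $|V^{n+1}(0)|$ through the integrated SDE, whereas you use weighted maximum principles on $e^{-Nt}f^{n+1}$ and $(1+|v|^q)e^{-C_{\e,\delta}t}f^{n+1}$; your PDE route is legitimate and, if anything, avoids the paper's slightly cavalier deterministic bound on the stochastic integral term. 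The more substantive divergence is your treatment of (iii): you interpose a uniform-in-$n$ lower bound on $T^{\e,\delta}_{f^n}$ before running the Feynman--Kac argument, whereas the paper proves (iii) \emph{without} any temperature lower bound --- the representation formula and the upper bounds $\|u^\e_{f^n}\|_{L^\infty}\leq\e^{-1}$, $\|T^{\e,\delta}_{f^n}\|_{L^\infty}\leq\delta^{-1}$ (which are purely algebraic consequences of the regularization, valid for any $f\geq 0$) already give $f^{n+1}\geq\e e^{Nt}e^{-C_{\e,\delta}(1+|v|)^2}$, and the temperature lower bound is only established afterwards, in Lemma \ref{lem_low}, \emph{using} the density lower bound. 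So your "circular dependence" is apparent rather than real, and the nondegeneracy of the diffusion is never needed for the lower bound. That said, your alternative derivation of $T^{\e,\delta}_{f^n}\geq c_{\e,\delta}$ (numerator $\geq\delta^2$, denominator bounded above via the $L^\infty_q$ estimate and Lemma \ref{lemma2.1}(i)) is simpler than the paper's Lemma \ref{lem_low} and perfectly adequate here, since constants are allowed to depend on $\delta$; it just produces a $\delta^2$-order constant rather than the paper's, which is obtained through the interpolation $\rho_f\leq C\|f\|_{L^\infty}T_f^{N/2}$.
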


Before proving the above proposition, we list some useful bound estimates in the lemma below for later use.
	\begin{lemma}Let $f\ge 0$ and $\e,\delta \in (0,1)$. The following relations hold.
		\label{lemma2.1}
		\begin{itemize}
		\item[(i)] Let $q>N+2$. Then  we have
		\begin{flalign*}
		&\|\rho_{f}\|_{L^\infty}
		+\|\rho_{f}u_{f}\|_{L^\infty}
		+\|\rho_{f}T_{f}\|_{L^\infty}
		\leq C\|f\|_{L_q^\infty},
		\end{flalign*}
		where $C>0$ depends only on $q$ and $N$.
		\item[(ii)] For $u_{f}^{\epsilon}, \Phi_f^{\epsilon,\delta}$ and $T_{f}^{\epsilon, \delta}$, we have
		\begin{flalign*}
			\| u_{f}^{\epsilon}\|_{L^\infty} \leq \frac1\e,  \qquad 
			\|T_{f}^{\epsilon, \delta}\|_{L^\infty}
			\leq \frac1\delta,
		\end{flalign*}
and
		\begin{flalign*}
			\|\nabla_xu_f^{\epsilon}\|_{L^{\infty}}+\|\nabla_x\Phi_f^{\epsilon,\delta}\|_{L^{\infty}}
			+\|\nabla_xT_f^{\epsilon,\delta}\|_{L^{\infty}}
			\leq C_{\epsilon,\delta}\|f\|_{L_q^\infty}.
		\end{flalign*}
			\item[(iii)]There exists a positive constant $C$, depending only on $N$, such that 
		%		\noindent\newline
		\begin{align*}
			\rho_{f}\leq C\|f\|_{L^{\infty}}({T_{f}})^{\frac{N}{2}}.
		\end{align*}
		\end{itemize}
	\end{lemma}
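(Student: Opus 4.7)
For part (i), I would start from the definition of the weighted $L^\infty$ norm, which gives the pointwise bound $|f(x,v)| \leq \|f\|_{L^\infty_q}/(1+|v|^q)$. Consequently, $\rho_f(x)$, $|\rho_f u_f(x)|$, and $\int|v|^2 f\,dv$ are dominated by integrals $\int_{\R^N}|v|^k(1+|v|^q)^{-1}\,dv$ for $k=0,1,2$, which are finite exactly when $q>N+k$, so the hypothesis $q>N+2$ simultaneously controls all three. To reach $\rho_f T_f$ I would use the identity $\int|v|^2 f\,dv = N\rho_f T_f + \rho_f|u_f|^2$, which yields $N\rho_f T_f \leq \int |v|^2 f\,dv$ and hence the stated bound.

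For part (ii), the $L^\infty$ bounds are essentially built into the regularizations. Writing $A:=(\rho_f u_f)*\theta_\epsilon$ and $B:=\rho_f*\theta_\epsilon + \epsilon(1+|A|^2)$, the elementary inequality $|A|\leq (1+|A|^2)/2$ gives $|u_f^\epsilon| \leq |A|/[\epsilon(1+|A|^2)] \leq 1/(2\epsilon)$. For the temperature, using $\Phi_f^{\epsilon,\delta}\geq 0$ from Remark~\ref{rem_poT} together with $\delta<1$, we have $\Phi_f^{\epsilon,\delta}+\delta^2 \leq 1+\Phi_f^{\epsilon,\delta}$, and the denominator is at least $\delta(1+\Phi_f^{\epsilon,\delta})$, which gives $T_f^{\epsilon,\delta}\leq 1/\delta$. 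For the gradient bounds, the plan is to apply the quotient rule and systematically exploit that (a) convolution with $\nabla_x \theta_\epsilon$ costs a factor $\epsilon^{-1}$ while transferring the $L^\infty$ estimates of part~(i) via Young's inequality, (b) the denominators of $u_f^\epsilon$ and $T_f^{\epsilon,\delta}$ are bounded below by $\epsilon$ and $\delta$ respectively, and (c) the structural ratios $|A|/B\leq 1/(2\epsilon)$ and the analogous one for the temperature numerator absorb the potentially bad quadratic terms that come from differentiating $|A|^2$. Combining these keeps every term linear in $\|f\|_{L^\infty_q}$ after collecting all $\epsilon,\delta$-powers into the constant $C_{\epsilon,\delta}$.

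For part (iii), I would split $\rho_f(x)=\int f\,dv$ over $\{|v-u_f|<R\}$ and $\{|v-u_f|\geq R\}$ for some $R>0$ to be chosen. On the first region, the pointwise bound $f\leq \|f\|_{L^\infty}$ gives a contribution $C\|f\|_{L^\infty} R^N$; on the second, Markov's inequality gives $R^{-2}\int|v-u_f|^2 f\,dv = N\rho_f T_f/R^2$. Choosing $R^2 = 2N T_f$ makes the second term equal to $\rho_f/2$ and, after absorbing it into the left-hand side, yields $\rho_f\leq C\|f\|_{L^\infty} T_f^{N/2}$ as claimed (with the degenerate case $T_f=0$ forcing $\rho_f=0$ via the defining moment identity).

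The main obstacle will be the gradient estimates in part~(ii): the definitions of $u_f^\epsilon$ and particularly $T_f^{\epsilon,\delta}$ contain nested nonlinear denominators, and one must exploit structural cancellations carefully to keep the right-hand side \emph{linear} in $\|f\|_{L^\infty_q}$ instead of polynomial, while isolating the $\epsilon,\delta$-dependence into a single constant $C_{\epsilon,\delta}$. Parts (i) and (iii) reduce to standard weighted integral estimates and the classical moment-threshold trick, respectively.
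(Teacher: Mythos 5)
Your proposal is correct and follows essentially the same route as the paper in all three parts: the weighted-integrability estimates with the identity $\int|v|^2f\,dv=N\rho_fT_f+\rho_f|u_f|^2$ for (i), the structural bounds $|A|\le 1+|A|^2$ and $\Phi_f^{\epsilon,\delta}+\delta^2\le 1+\Phi_f^{\epsilon,\delta}$ together with the quotient rule and the absorption of quadratic terms by the denominators for (ii), and the split of $\rho_f$ over $\{|v-u_f|\lessgtr R\}$ with Markov's inequality for (iii). The only cosmetic difference is in (iii), where you pick $R^2=2NT_f$ and absorb the tail into the left-hand side, while the paper takes $R^{N+2}=\rho_fT_f/\|f\|_{L^\infty}$ and then divides by $\rho_f^N$; both choices close the argument identically.
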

	\begin{proof}
(i) Since $q>N+2$, we easily obtain 
		\begin{align*}
		&|\rho_{f}|
		\leq\int_{\mathbb{R}^N}(1+|v|^q)^{-1}(1+|v|^q)f\,dv
		\leq C\|f\|_{L_q^\infty},
		\end{align*}
		\begin{align*}
		&|\rho_{f} u_{f}|
		\leq\int_{\mathbb{R}^N}|v|(1+|v|^q)^{-1}(1+|v|^q)f\,dv
		\leq C\|f\|_{L_q^\infty},
		\end{align*}
		and
		\begin{align*}
		|\rho_{f} T_{f}|
		= {\frac{1}{N}}\int_{\mathbb{R}^N}|v|^2f\,dv+{\frac{1}{N}}\rho|u_f|^2
		\le  {\frac{2}{N}}\int_{\mathbb{R}^N}|v|^2(1+|v|^q)^{-1}(1+|v|^q)f\,dv
		\leq C\|f\|_{L_q^\infty},
		\end{align*}
		where we used H\"{o}lder's inequality so that
		\begin{align*}
		\rho_{f}|u_{f}|^2
		=\left| \int_{\mathbb{R}^N}vf\,dv\right| ^2\left( \int_{\mathbb{R}^N}f\,dv\right) ^{-1}
		\leq\int_{\mathbb{R}^N}|v|^2f\,dv.
		\end{align*}
(ii) By the definition of $u_{f}^{\epsilon}$ and $T_{f}^{\epsilon, \delta}$, we get
		\begin{align*}
			|u_{f}^{\epsilon}|
			=&\left| {(\rho_{f} u_{f})*\theta_\epsilon\over (\rho_{f} * \theta_\epsilon)+\epsilon(1+|(\rho_{f} u_{f})*\theta_\epsilon|^2)}\right| 
			\leq \left| {1+|(\rho_{f} u_{f})*\theta_\epsilon|^2\over \epsilon(1+|(\rho_{f} u_{f})*\theta_\epsilon|^2)}\right| 
			\leq {\frac{1}{\epsilon}},
		\end{align*}
		and
		\begin{align*}	
			 T_{f}^{\epsilon, \delta}
			=&  \frac{\Phi_f^{\e, \delta} + \delta^2}{N\rho_{f}*\theta_\epsilon+\delta\big( 1+\Phi_f^{\e, \delta} \big) } 
			\leq  \frac{1+\Phi_f^{\e, \delta}}{\delta\big( 1+\Phi_f^{\e, \delta} \big) } 
			\leq \frac1\delta.
		\end{align*}
Next, we use the previous result (i) to obtain
		\begin{align*}
			|\nabla_xu_f^{\epsilon}|
			\leq& \left|{\frac{(\rho_f u_f)*\nabla_x\theta_\epsilon}{\rho_f*\theta_\epsilon+\epsilon(1+|(\rho_f u_f)*\theta_\epsilon|)}}\right|
			\cr&+\left|{\frac{(\rho_f u_f)*\theta_\epsilon(|\rho_f*\nabla_x\theta_\epsilon|+2\epsilon|\rho_fu_f*\theta_\epsilon||\rho_fu_f*\nabla_x\theta_\epsilon|)}{ (\rho_f*\theta_\epsilon+\epsilon(1+|(\rho_f u_f)*\theta_\epsilon|))^2}}\right|
			\cr\leq&  \frac{1}{\e}\left|(\rho_f u_f)*\nabla_x\theta_\epsilon\right|
			+\frac{1}{\e^2}\left|\rho_f*\nabla_x\theta_\epsilon\right|
			+\frac{2}{\e}\left|\rho_fu_f*\nabla_x\theta_\epsilon\right|
			\cr\leq& C_\epsilon\|f\|_{L_q^{\infty}}
		\end{align*}
		and
		\begin{align*}
			\left|\nabla\Phi_f^{\epsilon,\delta}\right|
			\leq& \left|\left( N\rho_{f} T_f+\rho_f\left|u_f\right|^2\right) *\nabla\theta_\epsilon\right|
			+\frac{2\left|\rho_f u_f*\nabla\theta_\epsilon\right|\left|\rho_f u_f*\theta_\epsilon\right|}{\rho_f*\theta_\epsilon+\delta\left( 1+\left| \rho_fu_f*\theta_\epsilon\right|^2\right) }\cr
			&+\frac{\left|\rho_fu_f*\theta_\epsilon\right|^2 \left|\rho_f*\nabla\theta_\epsilon\right|}{\left( \rho_f*\theta_\epsilon+\delta\left( 1+\left| \rho_fu_f*\theta_\epsilon\right|^2\right)\right)^2 }
			+\frac{2\left|\rho_fu_f*\nabla\theta_\epsilon\right|\left|\rho_fu_f*\theta_\epsilon\right|^3}{\left( \rho_f*\theta_\epsilon+\delta\left( 1+\left| \rho_fu_f*\theta_\epsilon\right|^2\right)\right)^2 }\cr
			\leq& \left|\left( \int_{{\mathbb{R}^N}}|v|^2f~dv\right) *\nabla\theta_\epsilon\right|
			+\frac{2}{\delta}\left|\rho_fu_f*\nabla\theta_\epsilon\right|
			+\frac{1}{\delta^2}\left|\rho_f*\nabla\theta_\epsilon\right|
			+\frac{2}{\delta^2}\left|\rho_fu_f*\nabla\theta_\epsilon\right|\cr
			\leq& C_{\epsilon,\delta}\|f\|_{L_q^\infty}.
		\end{align*}
Using the above result, we further estimate
		\begin{flalign*}
			|\nabla_xT_f^{\epsilon,\delta}|
			\leq&\frac{\left|\nabla\Phi_f^{\epsilon,\delta}\right|}{N\rho_{\epsilon}*\theta_\epsilon+\delta\left( 1+\Phi_f^{\epsilon,\delta}\right)}
			+\frac{\left(\left|\Phi_f^{\epsilon,\delta}\right|+1\right) \left|N\rho_f*\nabla\theta_\epsilon\right|}{\left( N\rho_{\epsilon}*\theta_\epsilon+\delta\left( 1+\Phi_f^{\epsilon,\delta}\right)\right) ^2}
			+\frac{\left(\left|\Phi_f^{\epsilon,\delta}\right|+1\right) \left|\delta\nabla\Phi_f^{\epsilon,\delta}\right| }{\left( N\rho_{\epsilon}*\theta_\epsilon+\delta\left( 1+\Phi_f^{\epsilon,\delta}\right)\right) ^2}\cr
			\leq& \frac{1}{\delta}\left|\nabla\Phi_f^{\epsilon,\delta}\right|
			+\frac{N}{\delta^2}\left|\rho_f*\nabla\theta_\epsilon\right|
			+\frac{1}{\delta}\left|\nabla\Phi_f^{\epsilon,\delta}\right|\cr
			\leq& C_{\epsilon,\delta}\|f\|_{L_q^\infty}.
		\end{flalign*}
(iii) Even though this proof is available in \cite{PP93}, for completeness, we give details of it here. For $\rho_{f}$, we have
	\begin{align*}
		\rho_{f}
		&\leq {\frac{1}{R^2}}\int_{|v-u|>R}|v-u|^2f\,dv+\int_{|v-u|\leq R}f\,dv\\
		&\leq {1\over R^2}\int_{{\mathbb{R}^N}}|v-u|^2f\,dv+\int_{|v-u|\leq R}f\,dv\\
		&\leq {1\over R^2}N\rho_{f} T_{f}+C  R^N \|f\|_{L^\infty}
	\end{align*}
	for any $R>0$. 	We take $R^{N+2}={\frac{\rho_{f} T_{f}}{\|f\|_{L^\infty}}}$ to obtain
	\begin{align*}
		{\frac{N}{R^2}}\rho_{f} T_{f}+C  R^N \|f\|_{L^\infty}
		&=(\rho_{f} T_{f})^{{\frac{-2}{N+2}}}\|f\|_{L^\infty}^{{\frac{2}{N+2}}}N\rho_{f} T_{f}
		+C(\rho_{f} T_{f})^{{\frac{N}{N+2}}}\|f\|_{L^\infty}^{{\frac{-N}{N+2}}+1}\cr
		&\leq C(\rho_{f} T_{f})^{{\frac{N}{N+2}}}\|f\|_{L^\infty}^{{\frac{2}{N+2}}}.
	\end{align*}
	Thus, we get
\[
		({\rho_{f}})^{N+2}
		\leq C(\rho_{f} T_{f})^{N}\|f\|_{L^\infty}^{2}.
\]
	Multiplying $(\rho_{f})^{-N}$ and taking square root to both sides yield
\[
		\rho_{f}\leq C\|f\|_{L^{\infty}}({T_{f}})^{N\over2}.
\]
This concludes the desired results.
	\end{proof}

We now provide the proof of Proposition \ref{prop2.2}, using the technique in \cite{CYS19}. We employ Feynman-Kac's formula to show the $L^{\infty}$-estimate. We consider the backward stochastic integral equations and then use Ito's rule, which results in a term with a $v$-derivative. To eliminate this term, we apply expectation and thereby determine the form of $f^{n+1}$.

	\begin{proof}[Proof of Proposition \ref{prop2.2}]
	We first notice that the global-in-time existence and uniqueness of classical solutions to \eqref{eqn2.2} can be obtained by the classical existence theory due to the regularizations. Thus, in the rest of the proof, we only provide the uniform bound estimates. 
		\noindent\newline
		$\bullet$ ($L^\infty$-estimate) Let us consider the backward stochastic integral equations:
		\begin{align*}
			&X^{n+1}(s;x,v,t)
			=x-\int_{s}^{t}V^{n+1}(\tau;x,v,t)\,d\tau,\cr
			&V^{n+1}(s;x,v,t)
			=v-\int_{s}^{t}\Big( u^\e_{f^n}(X^{n+1}(\tau;x,v,t),\tau)-V^{n+1}(\tau;x,v,t) \Big) \,d\tau-\int_{s}^{t}\sqrt{2T_{f^n}^{\epsilon, \delta}(\tau;x,v,t),\tau)}\,dB_\tau.
		\end{align*}
		Due to the regularizations, there exists a unique strong solution to the above equations. For simplicity, we set $Z^{n+1}(s):= (X^{n+1}(s), V^{n+1}(s)):= (X^{n+1}(s;x,v,t), V^{n+1}(s;x,v,t))$. Then, applying Ito's product rule, we obtain
\begin{align*}%	\label{theta}
\begin{aligned}
&d(e^{-Ns} f^{n+1}(Z^{n+1}(s),s)) \cr
&\quad =  -Ne^{-Ns} f^{n+1}(Z^{n+1}(s),s)\,ds +   e^{-Ns} \pa_s f^{n+1}(Z^{n+1}(s),s)\,ds + e^{-Ns}\left\langle V^{n+1},\nabla_xf^{n+1}(Z^{n+1}(s),s)\right\rangle ds \cr
&\qquad +  e^{-Ns}\left\langle \left(u^\e_{f^n}(X^{n+1}(s),s) - V^{n+1} \right),\nabla_vf^{n+1}(Z^{n+1}(s),s)\right\rangle ds \cr
&\qquad + e^{-Ns} T_{f^n}^{\epsilon, \delta}(X^{n+1}(s),s)\Delta_vf^{n+1}(Z^{n+1}(s),s)\, ds\cr
&\qquad +  e^{-Ns}\sqrt{2T_{f^n}^{\epsilon, \delta}(X^{n+1}(s),s)} \left\langle \nabla_vf^{n+1}(Z^{n+1}(s),s),  dB_s\right\rangle \cr
&\quad =   e^{-Ns}\sqrt{2T_{f^n}^{\epsilon, \delta}(X^{n+1}(s),s)}  \left\langle \nabla_vf^{n+1}(Z^{n+1}(s),s), dB_s\right\rangle 
\end{aligned}
\end{align*}
due to \eqref{eqn2.2}. Here $\left\langle \cdot,\cdot\right\rangle $ denotes the Euclidean inner product. Note that $f^{n+1}(Z^{n+1}(t),t) = f^{n+1}(x,v,t)$, and thus by integrating the above equation over $[0,t]$, we obtain
\[
e^{-Nt} f^{n+1}(x,v,t) = f_{0,\e}(Z^{n+1}(0))   + \int_0^t e^{-Ns}\sqrt{2T_{f^n}^{\epsilon, \delta}(X^{n+1}(s),s)}  \left\langle \nabla_vf^{n+1}(Z^{n+1}(s),s), dB_s\right\rangle.  
\]
Then taking the expectation of the above gives
\bq\label{mild_form}
f^{n+1}(x,v,t) = e^{Nt} \mathbb{E}\left[f_{0,\e}(Z^{n+1}(0))\rt],
\eq
where we used
\begin{align*}
\mathbb{E}\left[\int_0^t e^{-Ns}\sqrt{2T_{f^n}^{\epsilon, \delta}(X^{n+1}(s),s)}  \left\langle \nabla_vf^{n+1}(Z^{n+1}(s),s), dB_s\right\rangle \right]=0
\end{align*}				
		and from which we first easily have
		\[
		\sup_{0 \leq t \leq T}\sup_{n \in \N}\|f^n(t)\|_{L^1\cap L^\infty} \leq \|f_{0,\e}\|_{L^1\cap L^\infty} e^{NT}.
		\]
		\noindent\newline
		$\bullet$ ($L^\infty_q$-estimate)  A direct computation gives
		\begin{align*}
			d\left( e^sV^{n+1}(s) \right) 
			&=e^s V^{n+1}(s)\,ds
			+e^s\left( \big(  u^\e_{f^n}(X^{n+1}(s),s)-V^{n+1}(s)\big) \,ds+\sqrt{2T_{f^n}^{\epsilon, \delta}(X^{n+1}(s),s)}\,dB_s\right)\cr
			&= e^s  u^\e_{f^n}(X^{n+1}(s),s)\,ds
			+e^s\sqrt{2T_{f^n}^{\epsilon, \delta}(X^{n+1}(s),s)}\,dB_s.
		\end{align*}
		Integrating over $[0,t]$, we get
\[%		\begin{align}			\label{2.9}
			e^t v
			=V^{n+1}(0)
			+\int_{0}^{t} e^\tau u^\e_{f^n}(X^{n+1}(\tau),\tau)\, d\tau
			+\int_{0}^{t} e^\tau\sqrt{2T_{f^n}^{\epsilon, \delta}(X^{n+1}(\tau),\tau)}\,dB_\tau.
\]%		\end{align}
		Then, by Lemma \ref{lemma2.1}, we obtain
		\[
		|v| \leq |V^{n+1}(0)| + C_{\e,\delta} \leq C_{\e,\delta}(1+ |V^{n+1}(0)|).
		\] 
		This, combined with \eqref{mild_form} gives 
		\[
		(1 + |v|^q) f^{n+1}(x,v,t) \leq C_{\e,\delta} \mathbb{E}\left[(1+ |V^{n+1}(0)|^q)f_{0,\e}(Z^{n+1}(0))\right] \leq C_{\e,\delta}\|f_{0,\e}\|_{L^\infty_q}.
		\]
		Hence, we have
		\[
		\sup_{0 \leq t \leq T}\sup_{n \in \N}\|f^n(t)\|_{L^\infty_q} \leq C_{\e,\delta}\|f_{0,\e}\|_{L^\infty_q}.
		\]
		\noindent\newline
		$\bullet$ (Lower bound estimate on $\rho^n$): Using the assumption $f_{0,\e}\geq \epsilon {e^{-|v|^2}}$, we observe from \eqref{mild_form} that 
		\begin{align*}
			f^{n+1}(x,v,t)
			\geq \e e^{Nt}\mathbb{E}\left[   {e^{-|V^{n+1}(0)|^2}}\right].
		\end{align*}
		On the other hand, similarly as before, we easily get 
		\[
		|V^{n+1}(0)| \leq C_{\e,\delta} (1 + |v|),
		\]
%		for almost surely
		and this gives
\[%		\begin{align}\label{lower bound for fn}
			f^{n+1}(x,v,t)
			\geq \epsilon e^{Nt} \mathbb{E}\left[  {e^{-|V^{n+1}(0)|^2}}\right]
			\geq \epsilon e^{Nt} {e^{-C_{\e,\delta}(1+|v|)^2}}.
\]%		\end{align} 
This shows the strict positivity of $f^n$ for all $n \in \N$. Moreover, we have
		\begin{align*}
			\rho_{f^n}
			\geq\int_{{\mathbb{R}^N}}f^n(x,v,t)\,dv
			\geq \int_{{\mathbb{R}^N}}e^{Nt}\epsilon {e^{-C_{\e,\delta}(1+|v|)^2}}dv
			\geq c_{\e,\delta},
		\end{align*}
which completes the proof.
 	\end{proof}
 	
	Note that there is a relationship between $\rho_f$ and $T_f$ stated in Lemma \ref{lemma2.1}. Using that, we next show the lower bound estimate on $T_{f^n}^{\epsilon, \delta}$ uniformly in $n$.

	\begin{lemma}\label{lem_low}
	Let $q>N+2$. Then there exists $c_{\e,\delta} > 0$ such that 
		\begin{align*}
			\inf_{(x,t) \in \T^N \times (0,T)} \inf_{n \in \N} T_{f^n}^{\epsilon, \delta} \geq c_{\e,\delta}.
		\end{align*}
	\end{lemma}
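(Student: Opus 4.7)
The plan is to bound the denominator of $T_{f^n}^{\epsilon,\delta}$ from above uniformly in $n$, while using the nonnegativity of $\Phi_{f^n}^{\epsilon,\delta}$ to bound the numerator from below by $\delta^2$. Concretely, from Remark \ref{rem_poT} we have $\Phi_{f^n}^{\epsilon,\delta} \geq N\rho_{f^n}T_{f^n}*\theta_\epsilon \geq 0$, so the numerator of $T_{f^n}^{\epsilon,\delta}$ is at least $\delta^2$. Thus the whole problem reduces to controlling
\[
N\rho_{f^n}*\theta_\epsilon + \delta\bigl(1+\Phi_{f^n}^{\epsilon,\delta}\bigr)
\]
from above, uniformly in $n$.

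For the upper bound on $\Phi_{f^n}^{\epsilon,\delta}$, I would drop the (nonpositive) subtracted fraction and estimate
\[
\Phi_{f^n}^{\epsilon,\delta} \leq \bigl(N\rho_{f^n}T_{f^n}+\rho_{f^n}|u_{f^n}|^2\bigr)*\theta_\epsilon
\leq \|N\rho_{f^n}T_{f^n}+\rho_{f^n}|u_{f^n}|^2\|_{L^\infty},
\]
and similarly $\rho_{f^n}*\theta_\epsilon \leq \|\rho_{f^n}\|_{L^\infty}$. Then Lemma \ref{lemma2.1}(i) (together with the elementary observation $\rho_f|u_f|^2 \leq \int_{\R^N}|v|^2 f\,dv$ used in its proof) yields
\[
\|\rho_{f^n}\|_{L^\infty}+\|\rho_{f^n}T_{f^n}\|_{L^\infty}+\|\rho_{f^n}|u_{f^n}|^2\|_{L^\infty} \leq C\|f^n\|_{L^\infty_q},
\]
provided $q>N+2$. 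Invoking Proposition \ref{prop2.2}(ii), $\|f^n\|_{L^\infty_q} \leq C_{\epsilon,\delta,T}\|f_{0,\epsilon}\|_{L^\infty_q}$ uniformly in $n$, we obtain a uniform upper bound $C_{\epsilon,\delta,T}$ on the denominator, and hence
\[
T_{f^n}^{\epsilon,\delta}(x,t) \geq \frac{\delta^2}{C_{\epsilon,\delta,T}} =: c_{\epsilon,\delta}
\]
for all $n\in\N$ and all $(x,t)\in \T^N\times(0,T)$.

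There is no real obstacle here; the only point that requires care is the choice to split the analysis into the trivial lower bound on the numerator (via Remark \ref{rem_poT}) and an upper bound on the denominator based on the already-established uniform $L^\infty_q$ bound on $f^n$. Nothing in this argument uses the more delicate lower bound on $\rho_{f^n}$ from Proposition \ref{prop2.2}(iii) — that one is unnecessary here because the $\delta^2$ in the numerator of $T_f^{\epsilon,\delta}$ was precisely built in to make the temperature stay strictly positive irrespective of whether $\rho_{f^n}$ degenerates.
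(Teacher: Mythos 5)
Your proof is correct, and it is genuinely simpler than the one in the paper. You exploit the $\delta^2$ built into the numerator of $T_{f}^{\e,\delta}$ together with the nonnegativity of $\Phi_{f^n}^{\e,\delta}$ (Remark \ref{rem_poT}), so the entire burden falls on an upper bound for the denominator, which follows from Lemma \ref{lemma2.1}(i) and the uniform-in-$n$ bound $\|f^n\|_{L^\infty_q}\le C_{\e,\delta,T}\|f_{0,\e}\|_{L^\infty_q}$ of Proposition \ref{prop2.2}(ii) (convolution with the probability density $\theta_\e$ preserves the $L^\infty$ bound). The paper instead argues through the \emph{numerator}: it uses the lower bound $\rho_{f^n}\ge c_{\e,\delta}$ from Proposition \ref{prop2.2}(iii) together with Lemma \ref{lemma2.1}(iii), $\rho_{f}\le C\|f\|_{L^\infty}(T_f)^{N/2}$, to deduce a pointwise lower bound on $T_{f^n}$, hence on $(\rho_{f^n}T_{f^n})*\theta_\e$ and on $\Phi_{f^n}^{\e,\delta}$, and then concludes with the monotonicity of $x\mapsto x/(a+bx)$ after bounding $N\rho_{f^n}*\theta_\e$ from above. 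Your route avoids Proposition \ref{prop2.2}(iii) and Lemma \ref{lemma2.1}(iii) entirely, at the cost of producing a lower bound of order $\delta^2$ that relies purely on the artificial regularization; the paper's argument yields a bound reflecting the genuine non-degeneracy of the density and temperature, which is closer in spirit to the estimates needed later (e.g.\ the bound \eqref{lo_bdd_td} after $\e\to 0$). For the lemma as stated, either argument is sufficient, and your observation that the $\delta^2$ term makes the density lower bound unnecessary here is accurate.
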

	\begin{proof}
		By using Lemma \ref{lemma2.1} and Proposition \ref{prop2.2}, we find
		\begin{align*}
			c_{\e,\delta} \leq \rho_{f^n}
			\leq C\|f^n\|_{L^{\infty}}\left( T_{f^n}\right)^{\frac{N}{2}}\leq C\left( T_{f^n}\right)^{\frac{N}{2}},
		\end{align*}
		which implies
		\begin{align*}
			((\rho_{f^n} T_{f^n})*\theta_\epsilon)(x)
			\geq c_{\e,\delta}\int_{{\mathbb{T}^N}}\theta_\epsilon(y)\,dy
			= c_{\e,\delta}.
		\end{align*}
	Thus we have from Remark \ref{rem_poT} that 
			\[
	\Phi_{f_\epsilon}^{\e,\delta} \geq N \rho_{f_\epsilon} T_{f_\epsilon} * \theta_\epsilon \geq c_{\e,\delta}.
	\]	
Combining all of the above estimates together with Lemma \ref{lemma2.1} and Proposition \ref{prop2.2},  we conclude
		\begin{align*}
			T_{f^n}^{\epsilon, \delta}(x,t)=
			\frac{\Phi_{f^n}^{\e,\delta} + \delta^2 }{N\rho_{f^n}*\theta_\epsilon+\delta\left( 1+\Phi_{f^n}^{\e,\delta} \right) } \geq \frac{\Phi_{f^n}^{\e,\delta}  }{ C_{\e,\delta, N}\|f_{0,\e}\|_{L^\infty_q}+\delta\left( 1+\Phi_{f^n}^{\e,\delta} \right) }
			\geq c_{\e,\delta},
		\end{align*}
		where we used the fact that the function $f: \R_+ \to \R$ given by $f(x) = \frac{x}{a+bx}$ with $a,b > 0$ is increasing. This completes the proof.
	\end{proof}
	
By using the lower bound estimate on $T_{f^n}^{\e, \delta}$ obtained in the above lemma, we further obtain the uniform-in-$n$ bound estimate on $\|\nabla_vf^n\|_{L^2(0,T; L_q^2)}$ in the following lemma. 

		\begin{lemma}		\label{lq2}
		Let $q>N+2$. Then, there exists a positive constant $C_{\epsilon,\delta}$ independent of $n$ such that
\[
\sup_{n \in \N} \sup_{0 \leq t \leq T}\left(\|f^n(t)\|_{L_q^2}^2
+ \|\nabla_{x}f^n(t)\|_{L_q^2}^2 + \|\nabla_{v}f^n(t)\|_{L_q^2}^2\right) 
\leq 
C_{\e,\delta}\left(\|f_{0,\e}\|_{L_q^2}^2+\|\nabla_{x}f_{0,\e}\|_{L_q^2}^2+\|\nabla_{v}f_{0,\e}\|_{L_q^2}^2\right).
\]
	\end{lemma}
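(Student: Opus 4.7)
The plan is to obtain $n$-uniform weighted $L^2$ energy estimates on $f^{n+1}$ and on its first-order spatial and velocity derivatives, then close the system via Gronwall's inequality. The key $n$-independent inputs are the lower bound $T^{\e,\delta}_{f^n} \geq c_{\e,\delta}$ from Lemma \ref{lem_low}, the upper bounds $\|u^\e_{f^n}\|_{L^\infty} + \|T^{\e,\delta}_{f^n}\|_{L^\infty} \leq C_{\e,\delta}$ from Lemma \ref{lemma2.1}(ii), and -- combining Lemma \ref{lemma2.1}(ii) with the uniform $L^\infty_q$ bound of Proposition \ref{prop2.2}(ii) -- the gradient bounds $\|\nabla_x u^\e_{f^n}\|_{L^\infty} + \|\nabla_x T^{\e,\delta}_{f^n}\|_{L^\infty} \leq C_{\e,\delta}$, each independent of $n$.

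First I would test \eqref{eqn2.2} against $(1+|v|^q) f^{n+1}$ and integrate over $\T^N \times \R^N$. The transport term vanishes because the weight depends only on $v$, and integration by parts in $v$ on the right-hand side produces the good dissipation
\[
-\int_{\T^N \times \R^N}(1+|v|^q)\, T^{\e,\delta}_{f^n}\,|\nabla_v f^{n+1}|^2 \,dxdv \leq -c_{\e,\delta}\|\nabla_v f^{n+1}\|_{L^2_q}^2,
\]
together with cross terms coming from $\nabla_v(1+|v|^q) = q|v|^{q-2}v$ and drift terms from $(v-u^\e_{f^n})f^{n+1}$. These are absorbed into the dissipation by Young's inequality and otherwise bounded by $C_{\e,\delta}\|f^{n+1}\|^2_{L^2_q}$, using the $L^\infty$ bound on $u^\e_{f^n}$ and the elementary inequality $|v|^{2q-2}/(1+|v|^q) \leq C(1+|v|^q)$.

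Next, I differentiate \eqref{eqn2.2} in $x_i$ and in $v_i$; writing $g_i := \pa_{x_i} f^{n+1}$, $h_i := \pa_{v_i} f^{n+1}$, and letting $e_i$ be the $i$-th coordinate vector, one obtains
\begin{align*}
\pa_t g_i + v \cdot \nabla_x g_i &= \nabla_v \cdot \bigl[ T^{\e,\delta}_{f^n} \nabla_v g_i + (v - u^\e_{f^n}) g_i + (\pa_{x_i} T^{\e,\delta}_{f^n}) \nabla_v f^{n+1} - (\pa_{x_i} u^\e_{f^n}) f^{n+1} \bigr], \\
\pa_t h_i + v \cdot \nabla_x h_i + g_i &= \nabla_v \cdot \bigl[ T^{\e,\delta}_{f^n} \nabla_v h_i + (v - u^\e_{f^n}) h_i + e_i f^{n+1} \bigr].
\end{align*}
Testing the first against $(1+|v|^q) g_i$ and the second against $(1+|v|^q) h_i$ and repeating the argument of the first step, the new source terms (the coefficient gradients of $T^{\e,\delta}_{f^n}$ and $u^\e_{f^n}$, the coupling $g_i$ on the left of the $h_i$-equation, and the $e_i f^{n+1}$ on the right) are all handled by Young's inequality together with the uniform gradient bounds above. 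Summing the three resulting inequalities yields
\[
\frac{d}{dt}\mathcal{E}^{n+1}(t) \leq C_{\e,\delta}\,\mathcal{E}^{n+1}(t), \qquad \mathcal{E}^{n+1} := \|f^{n+1}\|_{L^2_q}^2 + \|\nabla_x f^{n+1}\|_{L^2_q}^2 + \|\nabla_v f^{n+1}\|_{L^2_q}^2,
\]
and Gronwall's inequality closes the argument.

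The main obstacle will be the careful bookkeeping of the cross terms produced when integrating by parts against the weight $(1+|v|^q)$: these generate factors $|v|^{q-1}$ or $|v|^{q-2}$ that must be split, via Young's inequality, between the dissipation (where they are absorbed) and the $L^2_q$ norm (where they feed Gronwall). What makes the bookkeeping tractable and, crucially, every constant $n$-independent, is the uniform lower bound $T^{\e,\delta}_{f^n} \geq c_{\e,\delta}$ from Lemma \ref{lem_low}: without it the dissipation coefficient would degenerate and neither the absorption nor the Gronwall constant could be made uniform in $n$.
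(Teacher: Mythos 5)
Your proposal is correct and follows essentially the same route as the paper: the zeroth-order weighted $L^2$ estimate with the dissipation coming from the uniform-in-$n$ lower bound on $T^{\e,\delta}_{f^n}$ (Lemma \ref{lem_low}), and the differentiated equations in $x_i$ and $v_i$ treated in the same way using the gradient bounds from Lemma \ref{lemma2.1}(ii) combined with the uniform $L^\infty_q$ bound of Proposition \ref{prop2.2}(ii), closed by Gr\"onwall. The paper carries out exactly these first-order estimates in Appendix \ref{app_a}, including the absorption of the $\nabla_x T^{\e,\delta}_{f^n}\cdot\nabla_v\nabla_x f^{n+1}$ cross term into the mixed-derivative dissipation, as you anticipate.
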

	\begin{proof}
 Since the proof involves lengthy computations, only the estimate for $\|f^n(t)\|_{L_q^2}^2$ is covered here for brevity. The rest of the proof is provided in Appendix \ref{app_a}.
		
	Multiplying \eqref{eqn2.2} by $(1+|v|^q)f$ and integrating  over $(x,v)$, one finds
		\begin{align*}
			&{d\over dt}\iint_{\mathbb{T}^N\times\mathbb{R}^N}(1+|v|^q)|f^{n+1}|^2\,dvdx\cr
			&\quad =2\iint_{\mathbb{T}^N\times\mathbb{R}^N}(1+|v|^q)f^{n+1}T_{f^n}^{\e,\delta}\Delta_vf^{n+1}\,dvdx
			-2\iint_{\mathbb{T}^N\times\mathbb{R}^N}(1+|v|^q)f^{n+1}\nabla_v\cdot\lt((u_{f^n}^\e-v)f^{n+1} \rt)dvdx\cr
			&\quad =:\rom{1}+\rom{2}.
		\end{align*}
		$\bullet$ Estimate of $\rom{1}$:
		Using integration by parts, we get
		\begin{align*}
			\rom{1}
			&=-2\iint_{\mathbb{T}^N\times\mathbb{R}^N}\nabla_v \Big((1+|v|^q)f^{n+1}\Big)T_{f^n}^{\e,\delta}\cdot\nabla_vf^{n+1}\,dvdx\cr
			&=-2\iint_{\mathbb{T}^N\times\mathbb{R}^N}q|v|^{q-2}f^{n+1}T_{f^n}^{\e,\delta}v\cdot\nabla_vf^{n+1}\,dvdx
			-2\iint_{\mathbb{T}^N\times\mathbb{R}^N}(1+|v|^q)T_{f^n}^{\e,\delta}|\nabla_vf^{n+1}|^2\,dvdx\cr
			&=:\rom{1}_1+\rom{1}_2,
		\end{align*}
		where  
			\begin{align*}
			\rom{1}_1
			&=-\iint_{\mathbb{T}^N\times\mathbb{R}^N}qv|v|^{q-2}T_{f^n}^{\e,\delta}\cdot\nabla_v|f^{n+1}|^2\,dvdx\cr
			&=\iint_{\mathbb{T}^N\times\mathbb{R}^N}qN|v|^{q-2}T_{f^n}^{\e,\delta}|f^{n+1}|^2\,dvdx
			+\iint_{\mathbb{T}^N\times\mathbb{R}^N}q(q-2)|v|^{q-2}T_{f^n}^{\e,\delta}|f^{n+1}|^2\,dvdx\cr
			&\leq q(N+q-2)\|T_{f^n}^{\e,\delta}\|_{L^\infty}\iint_{\mathbb{T}^N\times\mathbb{R}^N}(1+|v|^q)|f^{n+1}|^2\,dvdx\cr
			&\leq q(N+q-2)C_\delta\left\|f^{n+1}\right\|_{L_q^2}^2.
		\end{align*}
		For $I_2$, we use Lemma \ref{lem_low} to deduce 
		\[
		I_2 \leq -2 c_{\e,\delta}\left\|\nabla_vf^{n+1}\right\|_{L_q^2}^2.
		\]

\noindent		$\bullet$ Estimate of $\rom{2}$:
		A direct computation yields
		\begin{align*}
			\rom{2}
			&=2\iint_{\mathbb{T}^N\times\mathbb{R}^N}(1+|v|^q)f^{n+1}\Big( Nf^{n+1}-(u_{f^n}^\e-v)\cdot\nabla_vf^{n+1}\Big)\,dvdx =:\rom{2}_1+\rom{2}_2,
		\end{align*}
		where we readily obtain
		\[
		II_1 = 2N\left\|f^{n+1}\right\|_{L_q^2}^2.
		\]
		In the same manner as in $\rom{1}_1$, we get
		\begin{align*}
			\rom{2}_2&=\iint_{\mathbb{T}^N\times\mathbb{R}^N}\Big( qv|v|^{q-2}\cdot(u_{f^n}^\e-v)+(1+|v|^q)(-N)\Big)|f^{n+1}|^2\,dvdx\cr
			&\leq q\iint_{\mathbb{T}^N\times\mathbb{R}^N}|u_{f^n}^\e||v|^{q-1}|f^{n+1}|^2\,dvdx
			+(q-N)\iint_{\mathbb{T}^N\times\mathbb{R}^N}(1+|v|^q)|f^{n+1}|^2\,dvdx\cr
			&\leq q\|u_{f^n}^\e\|_{L^\infty}\iint_{\mathbb{T}^N\times\mathbb{R}^N}(1+|v|^q)|f^{n+1}|^2\,dvdx
			+(q-N)\iint_{\mathbb{T}^N\times\mathbb{R}^N}(1+|v|^q)|f^{n+1}|^2\,dvdx\cr
			&\leq (qC_\epsilon+q-N)\| f^{n+1}\|^2_{L^2_q}.
		\end{align*}
In the last line, we used Lemma \ref{lemma2.1}. Combining the above estimates gives
\[
\frac{d}{dt}\|f^n(t)\|_{L_q^2}^2 +  c_{\e,\delta}\left\|\nabla_vf^{n+1}\right\|_{L_q^2}^2 \leq C_{\e,\delta,q,N} \|f^n(t)\|_{L_q^2}^2
\]
and applying the Gr\"onwall's lemma deduces the zeroth-order estimate. For the first-order derivative estimates, we obtain from Appendix \ref{app_a} that 
\begin{align*}
\begin{split}
&\frac{d}{dt}\left(\|\nabla_{x}f^{n+1}(t)\|_{L_q^2}^2
+\|\nabla_{v}f^{n+1}(t)\|_{L_q^2}^2\right)
+c_{\epsilon,\delta}
\|\nabla_{x}\nabla_{v}f^{n+1}\|_{L_q^2}^2 \cr
&\quad \leq C_{\e,\delta,q,N}
\left(\|\nabla_{x}f^{n+1}(t)\|_{L_q^2}^2
+\|\nabla_{v}f^{n+1}(t)\|_{L_q^2}^2\right).
\end{split}
\end{align*}
Hence, by applying Gr\"onwall's lemma to the above, we conclude the desired result. 
	\end{proof}
	\begin{remark}Note that for $q>N$, $f \in L^\infty_q (\T^N \times \R^N)$ implies $f \in L^2_q(\T^N \times \R^N)$. Indeed, we observe
	\[
	\intrr (1+|v|^q)f^2\,dvdx \leq \|f\|_{L^\infty_q}^2 \intrr (1+|v|^q)^{-1}\,dvdx.
	\]
	However, Lemma \ref{lq2} provides the uniform-in-$n$ bound estimate on $\|\nabla_vf^n\|_{L^\infty(0,T; L_q^2)}$ which will be crucially used to show that $\{f^n\}$ is a Cauchy sequence in $L^\infty(0,T;L^2_q(\T^N\times\R^N))$.
	\end{remark}

	%%%%%%%%%%%%%%%%%%%%%%%%%%%%%%%%%%%%%%%%%%%%%%%%%%%%%%%%%%
	%
	%
	%
	%
	%
	%
	%
	%
	%
	%%%%%%%%%%%%%%%%%%%%%%%%%%%%%%%%%%%%%%%%%%%%%%%%%%%%%%%%%%
	\section{Weak solutions to the regularized equation}\label{sec:reg_weak}
	In this section, we establish the existence of weak solutions to the regularized equation \eqref{eqn2.2} in the sense of Definition \ref{def1.1} and provide the bound estimate of the third velocity-moment on $f_{\epsilon,\delta}$ uniformly in $\epsilon$ and $\delta$.
		%%%%%%%%%%%%%%%%%%%%%%%%%%%%%%%%%%%%%%%
	%
	%
	%
	%
	%
	%
	%
	%
	%%%%%%%%%%%%%%%%%%%%%%%%%%%%%%%%%%%%%%%
	\subsection{Existence of weak solutions}

	%%%%%%%%%%%%%%%%%%%%%%%%%%%%%%%%%%%%%%%
	%
	%
	%
	%
	%
	%
	%
	%
	%%%%%%%%%%%%%%%%%%%%%%%%%%%%%%%%%%%%%%%
\subsubsection{Cauchy estimates}\label{ssec_cauchy} In this part, we show that $\{f^n\}$ is Cauchy  in $L^{\infty}(0,T;L_q^2(\mathbb{T}^N\times\mathbb{R}^N))$. For this, we first present the following auxiliary lemmas. 
	\begin{lemma}\emph{\cite{BDM09}}
		\label{cau1}
	Let $\{a_n\}_{n\in \mathbb{N}}$ be a sequence of nonnegative continuous functions defined on $[0,T]$ satisfying
		\begin{equation*}
			a_{n+1}(t)\leq C_1+C_2\int_{0}^{t}a_n(\tau)\,d\tau+C_3\int_{0}^{t}a_{n+1}(\tau)\,d\tau,\;\; 0\leq t\leq T,
		\end{equation*}
		where $C_i (i=1,2,3)$ are nonnegative constants. Then there exists a positive constant $K$ such that for all $n\in \mathbb{N}$
		\begin{equation*}
			a_n(t) \leq \begin{cases}
				{K^nt^n\over n !} &  \mbox{if} \;\;C_1=0,\\
				Ke^{Kt} &  \mbox{if} \;\;C_1>0.\\
			\end{cases}
		\end{equation*}
	\end{lemma}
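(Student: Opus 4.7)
The plan is to first absorb the self-referencing term $C_3\int_0^t a_{n+1}(\tau)\,d\tau$ via the classical Gronwall integral inequality: setting $\alpha_n(t):=C_1+C_2\int_0^t a_n(\tau)\,d\tau$, which is nondecreasing in $t$, the hypothesis reads $a_{n+1}(t)\le \alpha_n(t)+C_3\int_0^t a_{n+1}(\tau)\,d\tau$, and a standard application yields $a_{n+1}(t)\le \alpha_n(t)\,e^{C_3 t}$. Consequently, on $[0,T]$ we obtain the cleaner recurrence
\[
a_{n+1}(t)\le \tilde C_1+\tilde C_2\int_0^t a_n(\tau)\,d\tau,\qquad \tilde C_1:=C_1 e^{C_3 T},\quad \tilde C_2:=C_2 e^{C_3 T}.
\]
From here the proof splits into the two cases by straightforward induction on $n$.

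For the case $C_1>0$ (so $\tilde C_1>0$), I would set $K:=\max\{\tilde C_1,\tilde C_2,\sup_{[0,T]}a_1\}$ and prove $a_n(t)\le Ke^{Kt}$ by induction. The base case $n=1$ is immediate from $K\ge\sup_{[0,T]}a_1$. For the induction step, inserting $a_n(\tau)\le Ke^{K\tau}$ into the simplified recurrence gives
\[
a_{n+1}(t)\le \tilde C_1+\tilde C_2\int_0^t Ke^{K\tau}\,d\tau = \tilde C_1+\tilde C_2\bigl(e^{Kt}-1\bigr),
\]
and $K\ge\max\{\tilde C_1,\tilde C_2\}$ gives $\tilde C_1-\tilde C_2\le K-\tilde C_2$, so the right-hand side is bounded by $Ke^{Kt}$, closing the induction.

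For the case $C_1=0$, the goal is $a_n(t)\le K^n t^n/n!$ with $K$ chosen large; I would use Gronwall once more on the first step to handle the base. Precisely, applying the argument of the first paragraph with $n=0$ and the continuity of $a_0$ on $[0,T]$ yields $a_1(t)\le \tilde C_2\|a_0\|_{L^\infty(0,T)}\,t$, so the choice $K\ge \tilde C_2\max\{1,\|a_0\|_{L^\infty(0,T)}\}$ secures the base. The induction step is a direct computation:
\[
a_{n+1}(t)\le \tilde C_2\int_0^t \frac{K^n \tau^n}{n!}\,d\tau=\frac{\tilde C_2}{K}\cdot\frac{K^{n+1}t^{n+1}}{(n+1)!}\le \frac{K^{n+1}t^{n+1}}{(n+1)!}
\]
provided $K\ge \tilde C_2$. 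The only mildly delicate point is the dependence of $K$ on the base datum $a_0$ (or $a_1$), which the statement allows since $K$ is existential; no essential analytic difficulty arises beyond this standard Gronwall-plus-induction scheme.
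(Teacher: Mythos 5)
The paper does not prove this lemma at all; it is quoted verbatim from \cite{BDM09}, so there is no in-paper argument to compare against. Your proof is correct and is the standard one: a first application of Gr\"onwall's inequality (with the nondecreasing majorant $\alpha_n$) absorbs the $C_3\int_0^t a_{n+1}$ term, and the two inductions then close exactly as you write them; I checked the inequality $\tilde C_1+\tilde C_2(e^{Kt}-1)\le Ke^{Kt}$ for $K\ge\max\{\tilde C_1,\tilde C_2\}$ and the factor $\tilde C_2/K\le 1$ in the factorial case, and both are fine. The only point worth flagging is the base case, which you already identify: the lemma's indexing is loose about whether the recurrence constrains the first term of the sequence, and in the $C_1=0$ case the bound $a_1(t)\le Kt$ genuinely requires either an $a_0$ with the recurrence holding at $n=0$ (your reading) or $a_1(0)=0$; in the paper's application (Lemma \ref{lem_cau}) this is harmless since the successive differences vanish at $t=0$.
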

	\begin{lemma}
		\label{lem4.1}
		Let $q>N+4$. Then we have, 
		\begin{align*}
			\lt\|u_{f^{n+1}}^{\epsilon}-u_{f^n}^{\epsilon}\rt\|_{L^\infty} + \lt\|T_{f^{n+1}}^{\epsilon, \delta}-T_{f^n}^{\epsilon, \delta}\rt\|_{L^\infty}
			\leq C_{\epsilon, \delta,T}\left\|f^{n+1}-f^n\right\|_{L^{\infty}(0,T;L_q^2(\mathbb{T}^N\times\mathbb{R}^N))}^2.
		\end{align*}
		
	\end{lemma}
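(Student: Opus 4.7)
The plan is to expand $u_{f^m}^\epsilon$ and $T_{f^m}^{\epsilon,\delta}$ as rational expressions in the convolved moments of $f^m$ and reduce the required bound to pointwise estimates on differences of these moments. With the shorthand
\[
A_m := (\rho_{f^m} u_{f^m})*\theta_\epsilon, \qquad B_m := \rho_{f^m}*\theta_\epsilon, \qquad E_m := \bigl(N\rho_{f^m} T_{f^m} + \rho_{f^m}|u_{f^m}|^2\bigr)*\theta_\epsilon,
\]
one has $u_{f^m}^\epsilon = A_m/D_m$ with $D_m := B_m + \epsilon(1+|A_m|^2)$, while $\Phi_{f^m}^{\epsilon,\delta}$ and $T_{f^m}^{\epsilon,\delta}$ are rational expressions in $(A_m, B_m, E_m)$. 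The basic quotient identity
\[
\frac{p_1}{q_1} - \frac{p_2}{q_2} = \frac{p_1-p_2}{q_2} + \frac{p_1(q_2-q_1)}{q_1 q_2},
\]
together with $|a|^2-|b|^2 = (a+b)\cdot(a-b)$ to linearize the $|A_m|^2$ factor, reduces the whole lemma to bounding $\|A_{n+1}-A_n\|_{L^\infty_x}$, $\|B_{n+1}-B_n\|_{L^\infty_x}$, and $\|E_{n+1}-E_n\|_{L^\infty_x}$; the denominators appearing in the identity are kept away from zero by $D_m\geq \epsilon$ and by the lower bound from Lemma \ref{lem_low}, while the numerator factors are controlled via Lemma \ref{lemma2.1}(ii) and Proposition \ref{prop2.2}(ii).

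For the pointwise moment differences I would apply Cauchy--Schwarz with the weight $(1+|v|^q)^{-1}$: for each $k \in \{0,1,2\}$,
\[
\biggl|\int_{\R^N} |v|^k (f^{n+1}-f^n)(x,v)\, dv\biggr|^2 \leq \biggl(\int_{\R^N} \frac{|v|^{2k}}{1+|v|^q}\, dv\biggr) \int_{\R^N} (1+|v|^q)|f^{n+1}-f^n|^2\, dv,
\]
where the $v$-integral on the right is finite precisely when $q > N+2k$. The worst case $k=2$, which arises from $E_m$ and from the $|u_{f^m}|^2$-piece of $\Phi_{f^m}^{\epsilon,\delta}$, forces $q > N+4$, matching exactly the hypothesis of the lemma. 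Convolving with $\theta_\epsilon$ and using $\|h*\theta_\epsilon\|_{L^\infty_x} \leq \|\theta_\epsilon\|_{L^2_x}\|h\|_{L^2_x} \leq C_\epsilon\|h\|_{L^2_x}$ then produces $L^\infty_x$ bounds on the three moment differences that are linear in $\|f^{n+1}-f^n\|_{L^2_q(\T^N\times\R^N)}$, uniformly in $t\in[0,T]$.

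Substituting these bounds into the quotient identity delivers the estimate on $\|u_{f^{n+1}}^\epsilon-u_{f^n}^\epsilon\|_{L^\infty}$ directly, and a two-level application of the same template (first to the inner quotient appearing in $\Phi_{f^m}^{\epsilon,\delta}$ and then to the outer quotient defining $T_{f^m}^{\epsilon,\delta}$) produces the estimate for $\|T_{f^{n+1}}^{\epsilon,\delta}-T_{f^n}^{\epsilon,\delta}\|_{L^\infty}$. The main obstacle I anticipate is the bookkeeping for $T_{f^m}^{\epsilon,\delta}$: it is a nested rational expression whose inner denominator $\rho_{f^m}*\theta_\epsilon + \delta(1+|A_m|^2)$ and outer denominator $N\rho_{f^m}*\theta_\epsilon + \delta(1+\Phi_{f^m}^{\epsilon,\delta})$ must each be bounded below uniformly in $n$ (where Lemma \ref{lem_low} is essential), and one has to track how the various powers of $\epsilon$ and $\delta$ accumulate in the final constant $C_{\epsilon,\delta,T}$. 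The time dependence $T$ enters only through the uniform $L^\infty_q$ bound of Proposition \ref{prop2.2}(ii) used when estimating product-type numerator factors such as $A_{n+1}+A_n$; the passage to the squared form on the right-hand side of the statement is a routine final step, making use of the a priori uniform bound on $\|f^{n+1}-f^n\|_{L^\infty(0,T;L^2_q)}$ provided by Lemma \ref{lq2}.
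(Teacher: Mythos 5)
Your proposal follows essentially the same route as the paper's proof: weighted Cauchy--Schwarz in $v$ to control the pointwise moment differences by $\|f^{n+1}-f^n\|_{L^2_q(\R^N_v)}$ (with the second moment forcing $q>N+4$, exactly as you note), Cauchy--Schwarz again for the convolution with $\theta_\epsilon$ via $\|\theta_\epsilon\|_{L^2}$, and then the quotient decomposition applied first to $u^\epsilon$, then to the inner quotient in $\Phi^{\epsilon,\delta}$, and finally to the outer quotient defining $T^{\epsilon,\delta}$. One small remark: Lemma \ref{lem_low} is not actually needed to keep the denominators away from zero --- the explicit regularizing terms $\epsilon\bigl(1+|A_m|^2\bigr)$ and $\delta\bigl(1+\Phi^{\epsilon,\delta}_{f^m}\bigr)$ already give the lower bounds $\epsilon$ and $\delta$, and this is what the paper uses.

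The one step that does not work is your closing claim that "the passage to the squared form on the right-hand side is a routine final step" using the a priori bound from Lemma \ref{lq2}. The logic runs the wrong way: an upper bound $\|f^{n+1}-f^n\|_{L^\infty(0,T;L^2_q)}\le M$ converts a \emph{quadratic} bound into a \emph{linear} one (since $X^2\le MX$), not a linear bound into a quadratic one; when the difference is small, $C\|\cdot\|$ does not dominate any multiple of $\|\cdot\|^2$. That said, this points to a typo in the lemma as stated rather than to a defect in your argument: the paper's own proof establishes precisely the linear estimate $\le C_{\epsilon,\delta}\|f^{n+1}-f^n\|_{L^\infty(0,T;L^2_q)}$, and it is this linear form that is invoked in the Cauchy estimate of Lemma \ref{lem_cau}. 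Your argument therefore proves the correct (linear) version of the statement; simply delete the final sentence.
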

	\begin{proof} 	For $q > N+4$, we first observe 
		\begin{align*}%\label{4.4}
		\begin{aligned}
			\lt|\rho_{f^{n+1}}-\rho_{f^n}\rt|
			&=\left|\int_{\mathbb{R}^N}\big(1+|v|^q\big)^{-\frac 12}\big(1+|v|^q\big)^{\frac 12}(f^{n+1}-f^n)\, dv\right| \leq C\left\|\big(1+|v|^q\big)^{\frac 12}(f^{n+1}-f^n)\right\|_{L^2(\mathbb{R}_v^N)}
		\end{aligned}
		\end{align*}
		and
		\begin{align*}%\label{4.5}
		\begin{aligned}
			\lt|\rho_{f^{n+1}}u_{f^{n+1}}-\rho_{f^n}u_{f^n}\rt|
			&=\left|\int_{\mathbb{R}^N}v\big(1+|v|^q\big)^{-\frac 12}\big(1+|v|^q\big)^{\frac12}(f^{n+1}-f^n)\, dv\right|\cr
			&\leq C\left\|\big(1+|v|^q\big)^{\frac 12}(f^{n+1}-f^n)\right\|_{L^2(\mathbb{R}_v^N)}.
		\end{aligned}
		\end{align*}
Then, we obtain
		\begin{align*}
			\lt|(\rho_{f^{n+1}}-\rho_{f^n})*\theta_\epsilon\rt| \leq \|\rho_{f^{n+1}}-\rho_{f^n}\|_{L^2(\mathbb{T}^N)}\|\theta_\e\|_{L^2(\mathbb{T}^N)} 
			&\leq C_\epsilon\left\|f^{n+1}-f^n\right\|_{L^{\infty}(0,T;L_q^2(\mathbb{T}^N\times\mathbb{R}^N))}
		\end{align*}
		and 
		\begin{align*}
			\lt|(\rho_{f^{n+1}} u_{f^{n+1}}-\rho_{f^n} u_{f^n})*\theta_\epsilon\rt| &\leq \|\rho_{f^{n+1}} u_{f^{n+1}}-\rho_{f^n} u_{f^n}\|_{L^2(\mathbb{T}^N)}\|\theta_\e\|_{L^2(\mathbb{T}^N)} \\
			&\leq C_\epsilon\left\|f^{n+1}-f^n\right\|_{L^{\infty}(0,T;L_q^2(\mathbb{T}^N\times\mathbb{R}^N))}.
		\end{align*}
		This together with Lemma \ref{lemma2.1} and Proposition \ref{prop2.2} gives
		\begin{align}\label{u ee}\begin{split}
				\Big|u_{f^{n+1}}^{\epsilon}-u_{f^n}^{\epsilon}\Big|
				&\leq\frac{\left|\big(\rho_{f^{n+1}} u_{f^{n+1}}-\rho_{f^n} u_{f^n}\big)*\theta_\epsilon\right|}
				{\rho_{f^{n+1}}*\theta_\epsilon+\epsilon\left(1+\big|\big(\rho_{f^{n+1}} u_{f^{n+1}}\big)*\theta_\epsilon\big|^2\right)}\cr
				&\quad +\frac{\left|\big(\rho_{f^n} u_{f^n}\big)*\theta_\epsilon\right|\Big|\big(\rho_{f^n}-\rho_{f^{n+1}}\big)*\theta_\epsilon+\epsilon\Big(\big|\rho_{f^n}u_{f^n}*\theta_\epsilon\big|^2-\big|\rho_{f^{n+1}}u_{f^{n+1}}*\theta_\epsilon\big|^2\Big)\Big|}
				{\left\lbrace \rho_{f^{n+1}}*\theta_\epsilon+\epsilon\Big(1+\big|\big(\rho_{f^{n+1}} u_{f^{n+1}}\big)*\theta_\epsilon\big|^2\Big)\right\rbrace \left\lbrace \rho_{f^n}*\theta_\epsilon+\epsilon\left(1+\left|\left(\rho_{f^n} u_{f^n}\right)*\theta_\epsilon\right|^2\right)\right\rbrace }\cr
				&\leq C_\e \left|\big(\rho_{f^{n+1}} u_{f^{n+1}}-\rho_{f^n} u_{f^n}\big)*\theta_\epsilon\right|
				+C_\e \left|\big(\rho_{f^n}-\rho_{f^{n+1}}\big)*\theta_\epsilon\right|\cr
				&\quad + C_\e \Big(\big|\rho_{f^n} u_{f^n}* \theta_\e\big|+\big|\rho_{f^{n+1}}u_{f^{n+1}}*\theta_\e\big|\Big)\left|\big(\rho_{f^n} u_{f^n}-\rho_{f^{n+1}} u_{f^{n+1}}\big)*\theta_\epsilon\right|\cr
				&\leq C_\epsilon\left\|f^{n+1}-f^n\right\|_{L^{\infty}(0,T;L_q^2(\mathbb{T}^N\times\mathbb{R}^N))}.
		\end{split}	\end{align}
		Thus, we deduce
		\begin{align*}
			\|u_{f^{n+1}}^{\epsilon}-u_{f^n}^{\epsilon}\|_{L^\infty}
			\le C_\epsilon\left\|f^{n+1}-f^n\right\|_{L^{\infty}(0,T;L_q^2(\mathbb{T}^N\times\mathbb{R}^N))}.
		\end{align*}		
For the estimate of $T_{f^{n+1}}^{\epsilon, \delta}-T_{f^n}^{\epsilon, \delta}$, we use Proposition \ref{prop2.2} to get		
		\begin{align*}%\label{3.6}
			\left|u_{f^n}\right| = \frac{\lt|\rho_{f^n} u_{f^n}\rt|}{\rho_{f^n}}
			\leq C_{\epsilon, \delta}\int_{{\mathbb{R}^N}}|v|\big(1+|v|^q\big)^{-1}\big(1+|v|^q\big)|f^n|\, dv
			\leq C_{\epsilon, \delta}\left\|f^n\right\|_{L_q^{\infty}}\int_{{\mathbb{R}^N}}|v|\big(1+|v|^q\big)^{-1}\, dv
			\leq C_{\epsilon, \delta}.
		\end{align*}
We also find 
		\begin{align*}
			\Phi_{f^{n+1}}^{\epsilon, \delta}-\Phi_{f^n}^{\epsilon, \delta}
			&=\left( \int_{{\mathbb{R}^N}}|v|^2\left( f^{n+1}-f^n\right) \, dv\right) * \theta_\epsilon
			- \frac{\big| \rho_{f^{n+1}} u_{f^{n+1}}*\theta_\epsilon\big|^2-\left| \rho_{f^n} u_{f^n}*\theta_\epsilon\right|^2}{\rho_{f^{n+1}}*\theta_\epsilon+\delta\left( 1+\big| \rho_{f^{n+1}} u_{f^{n+1}}*\theta_\epsilon\big|^2\right)  }\cr
			&\quad +\frac{\left| \rho_{f^n} u_{f^n}*\theta_\epsilon\right|^2\left\lbrace \big( \rho_{f^{n+1}}-\rho_{f^n}\big)*\theta_\epsilon +\delta\left( \big| \rho_{f^{n+1}} u_{f^{n+1}}*\theta_\epsilon\big|^2-\left| \rho_{f^n} u_{f^n}*\theta_\epsilon\right|^2\right)   \right\rbrace}{\left\lbrace  \rho_{f^n}*\theta_\epsilon+\delta\left( 1+\left| \rho_{f^n} u_{f^n}*\theta_\epsilon\right|^2\right) \right\rbrace \left\lbrace  \rho_{f^{n+1}}*\theta_\epsilon+\delta\left( 1+\big| \rho_{f^{n+1}} u_{f^{n+1}}*\theta_\epsilon\big|^2\right) \right\rbrace  }.
		\end{align*}
		Hence, similarly to \eqref{u ee}, we have
		\begin{align*}
			\big|\Phi_{f^{n+1}}^{\epsilon, \delta}-\Phi_{f^n}^{\epsilon, \delta}\big|
			\leq C_{\epsilon, \delta}\left\|f^{n+1}-f^n\right\|_{L^{\infty}(0,T;L_q^2(\mathbb{T}^N\times\mathbb{R}^N))}.
		\end{align*}
We further estimate
		\begin{align}\label{diff_tn}
		\begin{aligned}
			\big|T_{f^{n+1}}^{\epsilon, \delta}-T_{f^n}^{\epsilon, \delta}\big
			|
			&=\left|\frac{\Phi_{f^{n+1}}^{\epsilon, \delta} + \delta^2}{N\rho_{f^{n+1}}*\theta_\epsilon+\delta\big( 1+\Phi_{f^{n+1}}^{\epsilon, \delta} \big)}-\frac{\Phi_{f^n}^{\epsilon, \delta} + \delta^2}{N\rho_{f^n}*\theta_\epsilon+\delta\big( 1+\Phi_{f^n}^{\epsilon, \delta} \big)}\right|\cr
			&\leq{\left|\frac{\Phi_{f^{n+1}}^{\epsilon, \delta}-\Phi_{f^n}^{\epsilon, \delta}}{N\rho_{f^{n+1}}*\theta_\epsilon+\delta\big(1+\Phi_{f^{n+1}}^{\epsilon, \delta}\big)}\right| }\cr
			&\quad + (\Phi_{f^n}^{\epsilon, \delta} + \delta^2)\frac{N \left|\big(\rho_{f^n}-\rho_{f^{n+1}}\big)*\theta_\epsilon\right|+\delta \left|\Phi_{f^n}^{\epsilon, \delta} - \Phi_{f^{n+1}}^{\epsilon, \delta}\right|}
			{\left\lbrace \rho_{f^{n+1}}*\theta_\epsilon+\delta\big(1+\Phi_{f^{n+1}}^{\epsilon}\big)\right\rbrace \left\lbrace \rho_{f^n}*\theta_\epsilon+\delta\big(1+\Phi_{f^n}^{\epsilon}\big)\right\rbrace }\cr
			&\leq C_{\epsilon, \delta}\|f^{n+1}-f^n\|_{L^{\infty}(0,T;L_q^2(\mathbb{T}^N\times\mathbb{R}^N))},
		\end{aligned}
		\end{align}
		which completes the proof.
	\end{proof}

We then present the Cauchy estimate of our approximate solution sequence $\{f^n\}$ in the lemma below.	
\begin{lemma}\label{lem_cau} Let $q>N+4$ and $f^n \in L^{\infty}(0,T;L_q^{\infty}(\mathbb{T}^N\times\mathbb{R}^N))$ be the solution to the equation \eqref{eqn2.2} constructed in Proposition \ref{prop2.2}. Then $\{f^n\}$ is a Cauchy sequence in $L^{\infty}(0,T;L_q^2(\mathbb{T}^N\times\mathbb{R}^N))$, and thus there exists $f \in L^{\infty}(0,T;L_q^2(\mathbb{T}^N\times\mathbb{R}^N))$ such that
	\begin{align*}%\label{limiting fct}
		\sup_{0\leq t\leq T}\|(f^n-f)(t)\|_{L_q^2}\rightarrow 0
		\;\;\;
		\mbox{as $n\rightarrow \infty$.}
	\end{align*}
\end{lemma}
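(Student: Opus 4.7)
The plan is to set up an energy estimate in $L^2_q$ for the difference $g^{n+1}:=f^{n+1}-f^n$ and then invoke Lemma~\ref{cau1}. Subtracting the equations for $f^{n+1}$ and $f^n$ and using that both $T_{f^n}^{\e,\delta}$ and $u_{f^n}^\e$ are independent of $v$, one arrives at
\begin{align*}
\pa_t g^{n+1}+v\cdot\nabla_x g^{n+1} &= \nabla_v\cdot\bigl(T_{f^n}^{\e,\delta}\nabla_v g^{n+1}+(v-u_{f^n}^\e)g^{n+1}\bigr)\\
&\quad +\bigl(T_{f^n}^{\e,\delta}-T_{f^{n-1}}^{\e,\delta}\bigr)\Delta_v f^n-\bigl(u_{f^n}^\e-u_{f^{n-1}}^\e\bigr)\cdot\nabla_v f^n,
\end{align*}
with $g^{n+1}(\cdot,\cdot,0)=0$. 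The idea is to multiply by $(1+|v|^q)g^{n+1}$, integrate over $\T^N\times\R^N$, and treat the right-hand side as a linear part (the first line) together with a perturbation driven by the previous iterate (the second line).

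For the linear part, I would repeat verbatim the computation in Lemma~\ref{lq2} (with $g^{n+1}$ in place of $f^{n+1}$), using the lower bound $T_{f^n}^{\e,\delta}\ge c_{\e,\delta}$ from Lemma~\ref{lem_low} and the $L^\infty$ bounds from Lemma~\ref{lemma2.1}, to obtain a contribution bounded by $-c_{\e,\delta}\|\nabla_v g^{n+1}\|_{L^2_q}^2+C_{\e,\delta,q,N}\|g^{n+1}\|_{L^2_q}^2$. For the $u^\e$-difference term, Cauchy--Schwarz combined with the elementary inequality $|v|^{q-1}\le 1+|v|^q$, Lemma~\ref{lem4.1}, and the uniform bound $\|\nabla_v f^n(t)\|_{L^2_q}\le C_{\e,\delta}$ from Lemma~\ref{lq2} yields a bound of the form $C_{\e,\delta}\|g^n(t)\|_{L^2_q}\|g^{n+1}(t)\|_{L^2_q}$, which splits into $L^2_q$-terms by Young's inequality. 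The more delicate term is the $\Delta_v f^n$ contribution, for which I would integrate by parts in $v$ to move one derivative onto $g^{n+1}$, producing
\[
-\iint \bigl(T_{f^n}^{\e,\delta}-T_{f^{n-1}}^{\e,\delta}\bigr)\,\nabla_v\bigl[(1+|v|^q)g^{n+1}\bigr]\cdot\nabla_v f^n\,dvdx;
\]
then Cauchy--Schwarz, Lemma~\ref{lem4.1}, the uniform bound on $\|\nabla_v f^n\|_{L^2_q}$, and Young's inequality produce a bound of the form $\tfrac{c_{\e,\delta}}{2}\|\nabla_v g^{n+1}\|_{L^2_q}^2+C_{\e,\delta}\|g^n\|_{L^2_q}^2+C_{\e,\delta}\|g^{n+1}\|_{L^2_q}^2$, the first summand being absorbed by the coercive term inherited from the linear part.

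Collecting everything gives the scalar differential inequality
\[
\frac{d}{dt}\|g^{n+1}(t)\|_{L^2_q}^2 \le C_{\e,\delta}\bigl(\|g^{n+1}(t)\|_{L^2_q}^2+\|g^n(t)\|_{L^2_q}^2\bigr),
\]
so integrating in time with $g^{n+1}(0)=0$ yields exactly the hypothesis of Lemma~\ref{cau1} for $a_{n+1}(t):=\|g^{n+1}(t)\|_{L^2_q}^2$ with $C_1=0$. Hence $a_n(t)\le K^n t^n/n!$, and since $\sum_n\sqrt{K^n T^n/n!}<\infty$, the sequence $\{f^n\}$ is Cauchy in $L^\infty(0,T;L^2_q(\T^N\times\R^N))$ and converges to some $f$ in that space. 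The main obstacle will be the $\Delta_v f^n$ perturbation: because only a uniform first-order bound on $f^n$ is available from Lemma~\ref{lq2}, the second $v$-derivative must be removed via integration by parts, and the resulting $\nabla_v g^{n+1}$ factor has to be carefully absorbed by the coercive diffusion term; once that absorption is executed, the iteration closes cleanly.
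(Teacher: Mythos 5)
Your proposal is correct and follows essentially the same route as the paper: the same difference equation, the same weighted $L^2_q$ energy estimate with the coercive term from Lemma \ref{lem_low} absorbing the $\nabla_v(f^{n+1}-f^n)$ contribution of the $\Delta_v f^n$ perturbation after integration by parts, the same use of Lemmas \ref{lem4.1} and \ref{lq2}, and the same conclusion via Lemma \ref{cau1} with $C_1=0$ and summability of $\sqrt{K^nT^n/n!}$.
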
	
 \begin{proof}
 	For this, we observe from  \eqref{eqn2.2} that
	\begin{align*}\begin{split}
	 \partial_t\left(f^{n+1}-f^n\right)+v\cdot\nabla_x\left(f^{n+1}-f^n\right) 
			&  =T_{f^n}^{\e,\delta}\Delta_v\left(f^{n+1}-f^n\right)+\big(T_{f^n}^{\e,\delta}-T_{f^{n-1}}^{\e,\delta}\big)\Delta_vf^n +N\left(f^{n+1}-f^n\right)\cr
			&\quad +\big(v-u_{f^n}^{\epsilon}\big)\cdot\nabla_v\left(f^{n+1}-f^n\right)
			+ \big(u_{f^n}^{\epsilon}-u_{f^{n-1}}^{\epsilon}\big)\cdot\nabla_vf^n. 
	\end{split}	\end{align*}
Multiplying it by $(1+|v|^q)(f^{n+1}-f^n)$ and integrating over $(x,v)$, we get
	\begin{align*}
		&\frac12{d\over dt}\iint_{\mathbb{T}^N\times\mathbb{R}^N}\big(1+|v|^q\big)\left|f^{n+1}-f^n\right|^2\, dvdx\cr
		&\quad = \iint_{{\mathbb{T}^N}\times\R^N}\big(1+|v|^q\big)\left(f^{n+1}-f^n\right) \big(u_{f^n}^{\epsilon}-u_{f^{n-1}}^{\epsilon}\big) \cdot\nabla_vf^n\,dvdx\cr
		&\qquad +\iint_{{\mathbb{T}^N}\times\R^N}\big(1+|v|^q\big)\left(f^{n+1}-f^n\right)\left[ T_{f^n}^{\e,\delta}\Delta_v\left(f^{n+1}-f^n\right)+\big(T_{f^n}^{\e,\delta}-T_{f^{n-1}}^{\e,\delta}\big)\Delta_vf^n\right]dvdx\cr
		&\qquad +\iint_{{\mathbb{T}^N}\times\R^N}\big(1+|v|^q\big)\left(f^{n+1}-f^n\right)\left[ \big(v-u_{f^n}^{\epsilon}\big)\cdot\nabla_v\left(f^{n+1}-f^n\right)+N\left(f^{n+1}-f^n\right)\right] dvdx\cr
		&\quad =: \rom{1}+\rom{2}+\rom{3}+\rom{4}+\rom{5}.
	\end{align*}
	$\bullet$ Estimate of $\rom{1}$: We use Lemma \ref{lem4.1} to have
	\begin{align*}
		\rom{1}
		&\leq C\|u_{f^n}^{\epsilon}-u_{f^{n-1}}^{\epsilon}\|_{L^\infty}\iint_{\mathbb{T}^N\times\mathbb{R}^N}\big(1+|v|^q\big)\left|f^{n+1}-f^n\right|\big|\nabla_vf^n\big|\,dvdx\cr
		&\leq C_{\e,\delta}\left\|f^n-f^{n-1}\right\|_{L_q^2}\left\|f^{n+1}-f^n\right\|_{L_q^2} \|\nabla_vf^n\|_{L_q^2}\cr
		&\leq C_{\e,\delta}\left( \left\|f^n-f^{n-1}\right\|_{L_q^2}^2+\|\nabla_vf^n\|_{L_q^2}^2\|f^{n+1}-f^n\|_{L_q^2}^2\right).
	\end{align*}
	$\bullet$ Estimate of $\rom{2}$: It follows from Lemma \ref{lemma2.1} that
	\begin{align*}
		\rom{2}
		&=-\frac 12\iint_{\mathbb{T}^N\times\mathbb{R}^N}qv|v|^{q-2}T_{f^n}^{\epsilon, \delta}\cdot\nabla_v\left|f^{n+1}-f^n\right|^2  dvdx
		-\iint_{\mathbb{T}^N\times\mathbb{R}^N}\big(1+|v|^q\big)T_{f^n}^{\epsilon, \delta}\left|\nabla_v\left(f^{n+1}-f^n\right)\right|^2  dvdx\cr
		&\leq C\iint_{\mathbb{T}^N\times\mathbb{R}^N}|v|^{q-2}T_{f^n}^{\epsilon, \delta}\left|f^{n+1}-f^n\right|^2 dvdx
		- c_{\e,\delta}\iint_{\mathbb{T}^N\times\mathbb{R}^N}\big(1+|v|^q\big)\left|\nabla_v\left(f^{n+1}-f^n\right)\right|^2  dvdx\\
%		&\leq C\|T_{f^n}^{\epsilon, \delta}\|_{L^\infty}\iint_{\mathbb{T}^N\times\mathbb{R}^N}\big(1+|v|^q\big)\left|f^{n+1}-f^n\right|^2dvdx
%		-c_{\e,\delta}\left\| \nabla_v(f^{n+1}-f^n)\right\|_{L_q^2}^2\cr
		&\leq C_\delta\left\|f^{n+1}-f^n\right\|_{L_q^2}^2	 -c_{\e,\delta}\left\| \nabla_v(f^{n+1}-f^n)\right\|_{L_q^2}^2.
	\end{align*}
	$\bullet$ Estimate of $\rom{3}$: Similarly to the estimate of $\rom{1}$, we find
	\begin{align*}
		\rom{3}
		&= -\iint_{\mathbb{T}^N\times\mathbb{R}^N}qv|v|^{q-2}\left(f^{n+1}-f^n\right)\big(T_{f^n}^{\epsilon, \delta}-T_{f^{n-1}}^{\e,\delta}\big)\cdot\nabla_vf^n\, dvdx\cr
		&\quad -\iint_{\mathbb{T}^N\times\mathbb{R}^N}\big(1+|v|^q\big)\nabla_v\left(f^{n+1}-f^n\right)\big(T_{f^n}^{\epsilon, \delta}-T_{f^{n-1}}^{\e,\delta}\big)\cdot\nabla_vf^n\,dvdx\cr
		&\leq C\lt\|T_{f^n}^{\epsilon, \delta}-T_{f^{n-1}}^{\e,\delta}\rt\|_{L^\infty}\iint_{\mathbb{T}^N\times\mathbb{R}^N}\big(1+|v|^q\big)\left|f^{n+1}-f^n\right|\big|\nabla_vf^n\big|\,dvdx\cr
		&\quad +C\lt\|T_{f^n}^{\epsilon, \delta}-T_{f^{n-1}}^{\e,\delta}\rt\|_{L^\infty}\iint_{\mathbb{T}^N\times\mathbb{R}^N}\big(1+|v|^q\big)\left|\nabla_v\left(f^{n+1}-f^n\right)\right|\big|\nabla_vf^n\big|\,dvdx\cr
		&\leq C_{\e,\delta}\left\|f^n-f^{n-1}\right\|_{L_q^2}\left\|f^{n+1}-f^n\right\|_{L_q^2} \|\nabla_vf^n\|_{L_q^2}\cr
		&\quad +C_{\e,\delta}\left\|f^n-f^{n-1}\right\|_{L_q^2}\left\| \nabla_v(f^{n+1}-f^n)\right\|_{L_q^2} \|\nabla_vf^n\|_{L_q^2}\cr
		&\leq C_{\e,\delta}\left( \|\nabla_vf^n\|_{L_q^2}^2+1\right) \left( \left\|f^n-f^{n-1}\right\|_{L_q^2}^2+\left\|f^{n+1}-f^n\right\|_{L_q^2}^2\right)
		+ \frac {c_{\e,\delta}}{4} \left\| \nabla_v(f^{n+1}-f^n)\right\|_{L_q^2}^2.
	\end{align*}
	$\bullet$ Estimate of $\rom{4}$: By using the integration by parts and Lemma \ref{lemma2.1}, we get
	\begin{align*}
		\rom{4}
		&=\frac12\iint_{\mathbb{T}^N\times\mathbb{R}^N}\big(1+|v|^q\big)\big(v-u_{f^n}^{\epsilon}\big)\cdot\nabla_v\left|f^{n+1}-f^n\right|^2dvdx
		\\&=- \frac N2 \iint_{\mathbb{T}^N\times\mathbb{R}^N}\big(1+|v|^q\big)\left|f^{n+1}-f^n\right|^2dvdx
		- \frac q2\iint_{\mathbb{T}^N\times\mathbb{R}^N} v|v|^{q-2}\cdot\big(v-u_{f^n}^{\epsilon}\big)\left|f^{n+1}-f^n\right|^2dvdx\cr
		&\leq C_{\e,\delta}\left\|f^{n+1}-f^n\right\|_{L_q^2}^2.
	\end{align*}
	$\bullet$ Estimate of $\rom{5}$: It is clear that
	\begin{align*}
		\rom{5}
			=N\left\|f^{n+1}-f^n\right\|_{L_q^2}^2.
	\end{align*}
	Combining the above estimates leads to
	\begin{align*}
		{d\over dt}\left\|(f^{n+1}-f^n)(t)\right\|_{L_q^2}^2
		&\leq \left( C_{\e,\delta}\|\nabla_vf^n\|_{L_q^2}^2+C_{\e,\delta}\right) \left\| (f^{n+1}-f^n)(t)\right\|_{L_q^2}^2\cr
		& \quad +\left( C_{\e,\delta}+C_{\e,\delta}\|\nabla_vf^n\|_{L_q^2}^2 \right) \left\|(f^n-f^{n-1})(t)\right\|_{L_q^2}^2\cr
		&\leq C_{\e,\delta}\left(\left\| (f^{n+1}-f^n)(t)\right\|_{L_q^2}^2+\left\|(f^n-f^{n-1})(t)\right\|_{L_q^2}^2 \right).
	\end{align*}
In the last line, we used Lemma \ref{lq2}. Integrating the above over $[0,t]$ implies
	\begin{align*}
		\left\|(f^{n+1}-f^n)(t)\right\|_{L_q^2}^2
		\leq
		C_{\e,\delta}\int_{0}^{t}\left\|(f^{n+1}-f^n)(t)\right\|_{L_q^2}^2 ds
		+C_{\e,\delta}\int_{0}^{t}\left\|(f^{n}-f^{n-1})(t)\right\|_{L_q^2}^2 ds
	\end{align*}
which, together with Lemma \ref{cau1}, gives
	\begin{align*}
		\left\|(f^{n+1}-f^n)(t)\right\|_{L_q^2}^2
		\leq 
		\frac{K^{n+1}T^{n+1}}{(n+1)!}
		,
		\quad
		t\in \left[0, T \right].
	\end{align*}
This concludes that  $\{f^n\}$ is Cauchy  in $L^{\infty}(0,T;L_q^2(\mathbb{T}^N\times\mathbb{R}^N))$.  
\end{proof}	
		%%%%%%%%%%%%%%%%%%%%%%%%%%%%%%%%%%%%%%%
	%
	%
	%
	%
	%
	%
	%
	%
	%%%%%%%%%%%%%%%%%%%%%%%%%%%%%%%%%%%%%%%
	\subsubsection{Passing to the limit $n \to \infty$} 
%	For the approximated problem \eqref{eqn2.2}, we have shown that $\{f^n\}$ is a Cauchy sequence in $L^{\infty}(0,T;L_q^2(\mathbb{T}^N\times\mathbb{R}^N))$, and thus there exists $f_\epsilon\in L^{\infty}(0,T;L_q^2(\mathbb{T}^N\times\mathbb{R}^N))$ such that
%	\begin{align}\label{limiting fct}
%		\sup_{0\leq t\leq T}\|f^n(x,v,t)-f_\epsilon(x,v,t)\|_{L_q^2}\rightarrow 0
%		\;\;\;
%		\mbox{as $n\rightarrow \infty$.}
%	\end{align}
	Now we show that the limiting function $f$ is, in fact, a weak solution of the regularized equation \eqref{eqn2.1} in the sense of Definition \ref{def1.1}:
	\begin{align}\label{weak formulation}\begin{split}
			&-\iint_{\mathbb{T}^N\times\mathbb{R}^N}f_{0,\e}\psi_0\, dvdx
			-\int_{0}^{T}\iint_{\mathbb{T}^N\times\mathbb{R}^N}f\left( \partial_t\psi+v\cdot\nabla_x\psi+(u^\e_f-v)\cdot\nabla_v\psi \right) dvdxdt\\
			&\quad =\int_{0}^{T}\iint_{\mathbb{T}^N\times\mathbb{R}^N} T_{f}^{\e,\delta}f \Delta_v\psi\, dvdxdt,
	\end{split}	\end{align}
	where $\psi(x,v,t)\in \mc_c^2(\mathbb{T}^N\times\mathbb{R}^N\times[0,T])$ \mbox{with} $\psi(x,v,T)=0$.
	Due to the linearity, it suffices to deal with the terms with $u^\epsilon_{f}$ and $T^{\e,\delta}_{f}$ in the above.
	
	By using almost the same argument used in \eqref{u ee}, we first observe 
\[
\|(u^\e_{f^n} - u^\e_f)(t)\|_{L^\infty} \leq C_\e \|(f^n - f)(t)\|_{L^2_q},
\]
and thus by Lemma \ref{lem_cau}, we obtain 
	$$
	u_{f^n}^{\e}\rightarrow u_{f}^\e \quad \mbox{in } L^\infty(\T^N \times (0,T))
	$$
	as $n \to \infty$. On the other hand, we get
\begin{align*}
&\lt|\int_0^T\iint_{{\mathbb{T}^N}\times\mathbb{R}^N} (f^n u^\e_{f^n} - f u^\e_f) \cdot\nabla_v \psi\,dvdxdt \rt| \cr
&\quad \leq \|\nabla_v \psi\|_{L^\infty}\|u^\e_{f^n}\|_{L^\infty} \int_0^T\iint_{{\mathbb{T}^N}\times\mathbb{R}^N} |f^n -f|\,dvdxdt + \|\nabla_v \psi\|_{L^\infty}\int_0^T\iint_{{\mathbb{T}^N}\times\mathbb{R}^N} f|u^\e_{f^n} - u^\e_f| \,dvdxdt\cr
&\quad \leq C\|f^n-f\|_{L_q^2}^2 + C\|u^\e_{f^n} - u^\e_f\|_{L^\infty}
\end{align*}
for some $C>0$ independent of $n$, where we used $\|f\|_{L^1}\le C\|f\|_{L^2_q}$. This shows that 
\[
		\int_0^T\iint_{{\mathbb{T}^N}\times\mathbb{R}^N} f^n u^{\e}_{f^n} \cdot\nabla_v \psi\,dvdxdt \rightarrow \int_0^T\iint_{{\mathbb{T}^N}\times\mathbb{R}^N} f u^\e_{f} \cdot\nabla_v \psi\,dvdxdt 
		\]
as $n\rightarrow \infty.$ Similarly, it follows from \eqref{diff_tn} that 
\[
\|T_{f^n}^{\epsilon, \delta}-T_f^{\epsilon, \delta}\|_{L^\infty} \leq C_{\epsilon, \delta}\left\|f^n-f\right\|_{L_q^2},
\]
and thus we conclude
	$$
	\int_{0}^{T}\iint_{\mathbb{T}^N\times\mathbb{R}^N} T_{f^n}^{\e, \delta}f^n \Delta_v\psi\, dvdxdt\rightarrow \int_{0}^{T}\iint_{\mathbb{T}^N\times\mathbb{R}^N} T^{\e, \delta}_{f}f \Delta_v\psi\, dvdxdt 
	$$
	as $n\rightarrow \infty.$ 	This completes the proof of the existence of weak solutions for the regularized equation \eqref{eqn2.1}.
		%%%%%%%%%%%%%%%%%%%%%%%%%%%%%%%%%%%%%%%%%%%%%%%%%%%%%%%%%%
	%
	%
	%
	%
	%
	%
	%
	%
	%
	%%%%%%%%%%%%%%%%%%%%%%%%%%%%%%%%%%%%%%%%%%%%%%%%%%%%%%%%%%
	\subsection{Uniform bound estimates} We now show the uniform bound estimates of $f_{\e,\delta}$ with respect to $\e$ and $\delta$.  For this, we first recall the following technical lemma whose proof can be found in \cite[Lemma 2.5]{KMT14}.
	\begin{lemma}\cite{KMT14}		\label{lem4.4}  
		There exists a constant $C>0$, independent of $\e$ and $\delta$, such that
		\begin{align*}
			\sup_{y\in \T^N}\int_{{\T^N}}\theta_\epsilon(x-y){\frac{\rho_{f_{\e,\delta}}}{\theta_\epsilon*\rho_{f_{\e,\delta}}(x)}}\, dx
			\leq C
		\end{align*}
		for all nonnegative functions $\rho_{f_{\e,\delta}}\in L^1(\T^N)$.
	\end{lemma}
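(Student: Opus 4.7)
The plan is to reduce the inequality to a geometric covering argument based on the structural properties of a standard mollifier. Write $\rho := \rho_{f_{\e,\delta}}$ and assume, as is customary for the mollifier used in the paper, that $\theta$ is nonnegative with compact support in $B_r(0)$ and bounded below by a positive constant on a concentric ball $B_{r_0}(0)$, with $0 < r_0 \le r$. Then there exist $0 < c_1 \le c_2$, depending only on $\theta$, such that for every $z \in \T^N$,
\[
c_1 \e^{-N} \mathds{1}_{B_{r_0 \e}(0)}(z) \;\le\; \theta_\e(z) \;\le\; c_2 \e^{-N} \mathds{1}_{B_{r\e}(0)}(z).
\]

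Using the lower bound one obtains the pointwise estimate $(\theta_\e * \rho)(x) \ge c_1 \e^{-N} \rho(B_{r_0 \e}(x))$, where $\rho(E) := \int_E \rho(z)\,dz$, while the upper bound gives $\theta_\e(x - y) \le c_2 \e^{-N} \mathds{1}_{B_{r\e}(y)}(x)$. Plugging these into the integral in question, and adopting the convention $0/0 = 0$ (which is consistent since $(\theta_\e * \rho)(x) = 0$ forces $\rho \equiv 0$ a.e.\ on $B_{r\e}(x)$), the task reduces to showing
\[
J(y) := \int_{B_{r\e}(y)} \frac{\rho(x)}{\rho(B_{r_0 \e}(x))}\, dx \;\le\; C,
\]
with $C$ independent of $y \in \T^N$, $\e \in (0,1)$, and $\rho \geq 0$.

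The central observation is that whenever $x \in B_{r_0\e/2}(x_0)$, one has $B_{r_0\e/2}(x_0) \subseteq B_{r_0\e}(x)$ by the triangle inequality, so $\rho(B_{r_0\e}(x)) \ge \rho(B_{r_0\e/2}(x_0))$ and therefore
\[
\int_{B_{r_0\e/2}(x_0)} \frac{\rho(x)}{\rho(B_{r_0 \e}(x))}\, dx \;\le\; \frac{\rho(B_{r_0\e/2}(x_0))}{\rho(B_{r_0\e/2}(x_0))} \;\le\; 1.
\]
Now cover $B_{r\e}(y)$ by balls $\{B_{r_0\e/2}(x_i)\}_{i=1}^{K}$ where $K = K(N, r/r_0)$ depends only on the dimension and the fixed ratio $r/r_0$; such a uniform covering number exists by standard volume counting (or a Vitali-type argument). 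Summing the single-ball bound over $i$ yields $J(y) \le K$, and hence the claimed estimate with $C = (c_2/c_1)\, K$.

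The only real subtlety is uniformity rather than difficulty: one has to verify that both the pointwise bounds on $\theta_\e$ and the covering number $K$ are invariant under the rescaling $\theta_\e(\cdot) = \e^{-N}\theta(\cdot/\e)$, which is the case because after factoring out $\e^{-N}$ the remaining geometry depends only on the fixed ratio $r/r_0$. The restriction $\e \in (0,1)$ ensures that the balls $B_{r\e}(y)$ are genuinely small on the torus, so the covering argument proceeds exactly as in Euclidean space without any wrap-around issue.
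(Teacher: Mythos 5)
Your argument is correct, and it is essentially the standard proof of this estimate: bound $\theta_\e$ above and below by (multiples of) indicators of balls of comparable radii, observe that $\int_{B}\rho(x)/\rho(B')\,dx\le 1$ whenever $B'\supseteq B$ for every $x\in B$, and conclude by a covering argument whose cardinality depends only on $N$ and the fixed ratio of radii. The paper itself gives no proof of this lemma (it is quoted from \cite[Lemma 2.5]{KMT14}, whose proof runs along the same lines as yours), so the only point worth making explicit is your standing hypothesis that $\theta$ is bounded below on a ball centered at the origin --- this is not stated in the paper's definition of the mollifier but is implicit in ``standard mollifier'' and is genuinely needed, since the estimate can fail for a nonnegative $\theta$ vanishing near $0$.
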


We then complete the proof of Proposition \ref{prop2.1} by providing the bound estimates in the following proposition. 
	\begin{proposition}\label{prop3.1}
		Let  $f_{\e,\delta}$ be the weak solution of \eqref{eqn2.1} constructed in Proposition \ref{prop2.1}. Then we have
\begin{align}\label{lp_fed}
\begin{aligned}
& \iint_{\mathbb{T}^N\times\mathbb{R}^N}f_{\epsilon,\delta}^p\,dvdx
			+p(p-1)\int_{0}^{t}\iint_{\mathbb{T}^N\times\mathbb{R}^N}f_{\epsilon,\delta}^{p-2}T_{f_{\e,\delta}}^{\e,\delta}|\nabla_vf_{\epsilon,\delta}|^2\,dvdxds \cr
&\quad			\leq e^{N(p-1)t}\iint_{\mathbb{T}^N\times\mathbb{R}^N}f_{0,\e}^p\,dvdx
		\end{aligned}
		\end{align}
		for any $p \ge 1$. In particular, we obtain
		\begin{align*}
			\sup_{0\leq t\leq T}\|f_{\e,\delta}(t)\|_{L^\infty}\leq \|f_{0,\e}\|_{L^\infty} e^{NT}.
		\end{align*}
Moreover, the third velocity-moment on $f_{\e,\delta}$ is uniformly bounded:
		\[
		\sup_{0 \leq t \leq T}\iint_{{\mathbb{T}^N}\times\mathbb{R}^N}|v|^3 f_{\e,\delta}(x,v,t)\, dvdx \leq C
		\]
		for some $C>0$ independent of $\epsilon$ and $\delta$.
	\end{proposition}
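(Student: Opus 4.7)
My plan is to establish the three assertions in sequence: the $L^p$ estimate by a direct $f^{p-1}$-multiplier argument, the $L^\infty$ bound by letting $p\to\infty$, and then the third velocity-moment bound by a Gr\"onwall argument built on Jensen's inequality and Lemma \ref{lem4.4}. For \eqref{lp_fed}, I would multiply \eqref{eqn2.1} by $p f_{\e,\delta}^{p-1}$ and integrate over $\T^N\times\R^N$. The transport term vanishes by integration by parts in $x$; since $T_{f_{\e,\delta}}^{\e,\delta}$ does not depend on $v$, two integrations by parts on the diffusion term produce the nonnegative dissipation $-p(p-1)\iint_{\T^N\times\R^N}f_{\e,\delta}^{p-2}T_{f_{\e,\delta}}^{\e,\delta}|\nabla_v f_{\e,\delta}|^2\,dvdx$, and the drift term yields $N(p-1)\iint_{\T^N\times\R^N}f_{\e,\delta}^{p}\,dvdx$ via $\nabla_v\cdot(v-u^\e_{f_{\e,\delta}})=N$. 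Since $T_{f_{\e,\delta}}^{\e,\delta}\geq 0$ by Remark \ref{rem_poT}, discarding the dissipation and applying Gr\"onwall gives $\|f_{\e,\delta}(t)\|_{L^p}^p \leq e^{N(p-1)t}\|f_{0,\e}\|_{L^p}^p$; reinserting this into the time-integrated identity produces exactly \eqref{lp_fed}. The $L^\infty$ bound then follows by taking the $p$-th root and letting $p\to\infty$, using $\|g\|_{L^p}\to\|g\|_{L^\infty}$ for $g\in L^1\cap L^\infty$.

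For the third-moment bound, multiplying \eqref{eqn2.1} by $|v|^3$ and integrating by parts twice (using $\nabla_v|v|^3 = 3|v|v$ and $\nabla_v\cdot(|v|v)=(N+1)|v|$) yields
\[
\frac{d}{dt}\iint_{\T^N\times\R^N}|v|^3 f_{\e,\delta}\,dvdx + 3\iint_{\T^N\times\R^N}|v|^3 f_{\e,\delta}\,dvdx = 3(N+1)\iint T_{f_{\e,\delta}}^{\e,\delta}|v|f_{\e,\delta}\,dvdx + 3\iint (|v|v)\cdot u^\e_{f_{\e,\delta}}\,f_{\e,\delta}\,dvdx.
\]
My goal is to bound the right-hand side by $C\bigl(1+\iint|v|^3 f_{\e,\delta}\,dvdx\bigr)$ with $C$ independent of $\e$ and $\delta$, and then conclude by Gr\"onwall using $\iint|v|^3 f_{0,\e}\,dvdx\leq C$, which follows from $f_0\in L^1_3$ and the construction \eqref{f_0ep}. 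The two pointwise structural inputs are: (a) splitting the numerator of $T_{f_{\e,\delta}}^{\e,\delta}$ and using $\delta(1+\Phi_{f_{\e,\delta}}^{\e,\delta})$ in the denominator for the $\delta^2$-piece to obtain $T_{f_{\e,\delta}}^{\e,\delta}\leq E*\theta_\e/(N\rho_{f_{\e,\delta}}*\theta_\e)+1$, where $E(x):=\int_{\R^N}|v|^2 f_{\e,\delta}(x,v)\,dv$; (b) Cauchy--Schwarz on the convolution defining $u^\e_{f_{\e,\delta}}$, giving $|u^\e_{f_{\e,\delta}}|\leq A:=\sqrt{E*\theta_\e/(\rho_{f_{\e,\delta}}*\theta_\e)}$. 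Two applications of Young's inequality then yield the pointwise estimate $T_{f_{\e,\delta}}^{\e,\delta}|v|+|v|^2|u^\e_{f_{\e,\delta}}|\leq C(A^3 + |v|^3 + |v|)$.

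After integrating against $f_{\e,\delta}$, the $|v|f_{\e,\delta}$ term is controlled via H\"older with exponents $(3,3/2)$ in $v$ combined with mass conservation to get $\iint|v|f_{\e,\delta}\,dvdx\leq C\bigl(1+\iint|v|^3 f_{\e,\delta}\,dvdx\bigr)$, and the only nontrivial task left is to bound $\int_{\T^N} A^3\rho_{f_{\e,\delta}}\,dx$. For this I would apply Jensen's inequality to the probability measure $d\mu_x(y):=\theta_\e(x-y)\rho_{f_{\e,\delta}}(y)\,dy/(\rho_{f_{\e,\delta}}*\theta_\e)(x)$, which gives $(E*\theta_\e/(\rho_{f_{\e,\delta}}*\theta_\e))^{3/2}(x)\leq (E^{3/2}/\rho_{f_{\e,\delta}}^{1/2})*\theta_\e(x)/(\rho_{f_{\e,\delta}}*\theta_\e)(x)$. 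Multiplying by $\rho_{f_{\e,\delta}}(x)$, integrating, and invoking Fubini together with Lemma \ref{lem4.4} absorbs the remaining convolution factor and produces $\int A^3\rho_{f_{\e,\delta}}\,dx\leq C\int E^{3/2}/\rho_{f_{\e,\delta}}^{1/2}\,dx$. The final step is the pointwise H\"older inequality $E\leq \rho_{f_{\e,\delta}}^{1/3}\bigl(\int|v|^3 f_{\e,\delta}\,dv\bigr)^{2/3}$, which gives $E^{3/2}/\rho_{f_{\e,\delta}}^{1/2}\leq\int|v|^3 f_{\e,\delta}\,dv$ and hence $\int A^3\rho_{f_{\e,\delta}}\,dx\leq C\iint|v|^3 f_{\e,\delta}\,dvdx$. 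The main obstacle is precisely avoiding the fourth velocity-moment $\iint|v|^4 f_{\e,\delta}\,dvdx$, which naive Cauchy--Schwarz bounds on $(T^{\e,\delta}_{f_{\e,\delta}})^2$ or $|u^\e_{f_{\e,\delta}}|^2$ would produce; the Jensen--Lemma \ref{lem4.4}--H\"older chain above is exactly the mechanism needed to close the estimate at the level of the third moment alone.
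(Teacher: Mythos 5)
Your proposal is correct and follows essentially the same route as the paper: the $L^p$ identity with the dissipation term and Gr\"onwall, the $L^\infty$ bound as the $p\to\infty$ limit, and the third-moment estimate reduced via Young's inequality to controlling $\int_{\T^N}|u^\e_{f_{\e,\delta}}|^3\rho_{f_{\e,\delta}}\,dx$ and the $T^{\e,\delta}_{f_{\e,\delta}}$ contribution through Lemma \ref{lem4.4}. Your Cauchy--Schwarz/Jensen/pointwise-H\"older chain for $\int A^3\rho_{f_{\e,\delta}}\,dx$ is just a repackaging of the paper's direct three-factor H\"older estimates \eqref{4.7}--\eqref{4.8}, yielding the identical bound $C\iint|v|^3 f_{\e,\delta}\,dvdx$.
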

	
%	\begin{remark}
%		{\color{red}We recall the notice in the proof of Proposition \ref{prop2.2} that the global-in-time existence and uniqueness of classical solutions to \eqref{eqn2.2} can be obtained by the classical existence theory due to the regularizations. For each n-th step, if $f^n$ is given, then $f^{n+1}$ is smooth and uniquely determined for each time $t$.}
%	\end{remark}
	\begin{remark} 	The bound estimate on $\|f_{\e,\delta}\|_{L^\infty}$ can be obtained by using the same argument as in the proof of Proposition \ref{prop2.2}. However, it does not give any uniform estimate of $\nabla_v f_{\e,\delta}$, which is required for the strong compactness via the velocity averaging, see Section \ref{ssec:compt} below. That is why we also provide the $L^p$ bound estimate of $f_{\e,\delta}$.
	\end{remark}
	\begin{proof}[Proof of Proposition \ref{prop3.1}]
	It follows from \eqref{eqn2.1} that 
	\begin{align*}
		&\frac{d}{dt}\iint_{\mathbb{T}^N\times\mathbb{R}^N}f_{\epsilon,\delta}^p\,dvdx\cr 
		&\quad =p\iint_{\mathbb{T}^N\times\mathbb{R}^N}f_{\epsilon,\delta}^{p-1}
		\Big( \nabla_v\cdot \left[ T_{f_{\e,\delta}}^{\e,\delta}\nabla_vf_{\epsilon,\delta}-\left( u_{f_{\e,\delta}}^\e-v\right) f_{\epsilon,\delta} \right] \Big) \,dvdx\\
		&\quad =-p(p-1)\iint_{\mathbb{T}^N\times\mathbb{R}^N}f_{\epsilon,\delta}^{p-2}\nabla_vf_{\epsilon,\delta}\cdot\Big( T_{f_{\e,\delta}}^{\e,\delta}\nabla_vf_{\epsilon,\delta}- ( u_{f_{\e,\delta}}^\e-v ) f_{\epsilon,\delta}\Big) \,dvdx
		\\
		&\quad =-p(p-1)\iint_{\mathbb{T}^N\times\mathbb{R}^N}f_{\epsilon,\delta}^{p-2}T_{f_{\e,\delta}}^{\e,\delta}|\nabla_vf_{\epsilon,\delta}|^2\,dvdx+(p-1)\iint_{\mathbb{T}^N\times\mathbb{R}^N} ( u_{f_{\e,\delta}}^\e-v )\cdot\nabla_v\{f_{\epsilon,\delta}^{p}\}\,dvdx\cr
		&\quad =-p(p-1)\iint_{\mathbb{T}^N\times\mathbb{R}^N}f_{\epsilon,\delta}^{p-2}T_{f_{\e,\delta}}^{\e,\delta}|\nabla_vf_{\epsilon,\delta}|^2\,dvdx+N(p-1)\iint_{\mathbb{T}^N\times\mathbb{R}^N}f_{\epsilon,\delta}^{p}\,dvdx.
	\end{align*}
We then apply Gr\"onwall's lemma to the above to conclude \eqref{lp_fed}. The $L^\infty$ bound estimate on $f_{\e,\delta}$ simply follows from \eqref{lp_fed}. Next, we show the bound estimate of the third velocity-moment on $f_{\e,\delta}$. Straightforward computation gives
	\begin{align*}
		{d\over dt}\iint_{{\mathbb{T}^N}\times\mathbb{R}^N}|v|^3 f_{\e,\delta}\, dvdx&= 3\iint_{{\mathbb{T}^N}\times\mathbb{R}^N} |v| v\cdot\left(u_{f_{\e,\delta}}^\epsilon-v\right)f_{\e,\delta} \,dvdx +3(N+1) \iint_{{\mathbb{T}^N}\times\mathbb{R}^N}|v|T_{f_{\e,\delta}}^{\e,\delta} f_{\e,\delta} \,dvdx\cr
		&=:\rom{1}+\rom{2}.
	\end{align*}
	Before estimating $\rom{1}$ and $\rom{2}$, we first observe
	\begin{align}\label{4.7}\begin{split}
			|u_{f_{\e,\delta}}^\epsilon|^3
			&=\left|\frac{\displaystyle \iint_{{\mathbb{T}^N}\times\mathbb{R}^N}\theta_\epsilon(x-y)\omega f_{\e,\delta}(y,\omega)\,dyd\omega}{\rho_{f_{\e,\delta}}*\theta_\epsilon+\epsilon\left(1+\left|\left(\rho_{f_{\e,\delta}} u_{f_{\e,\delta}}\right)*\theta_\epsilon\right|\right)}\right|^3\cr
			&\leq \left(\rho_{f_{\e,\delta}}*\theta_\epsilon\right)^{-3}\displaystyle \left(\iint_{{\mathbb{T}^N}\times\mathbb{R}^N}\theta_\epsilon(x-y) f_{\e,\delta}(y,\omega)\,dyd\omega\right)^{2}\iint_{{\mathbb{T}^N}\times\mathbb{R}^N}\theta_\epsilon(x-y)\left|\omega\right|^3 f_{\e,\delta}(y,\omega)\,dyd\omega\cr
			&=(\rho_{f_{\e,\delta}}*\theta_\epsilon)^{-1} \iint_{{\mathbb{T}^N}\times\mathbb{R}^N}\theta_\epsilon(x-y)\left|\omega\right|^3 f_{\e,\delta}(y,\omega)\,dyd\omega
	\end{split}\end{align}
and	
	\begin{align*}	
		 T_{f_{\epsilon,\delta}}^{\e,\delta} 
			&= \frac{\Phi_{f_{\epsilon,\delta}}^{\e,\delta}}{N\rho_{f_{\epsilon,\delta}}*\theta_\epsilon+\delta\big( 1+\Phi_{f_{\epsilon,\delta}}^{\e,\delta}\big) }+\frac{\delta^2}{N\rho_{f_{\epsilon,\delta}}*\theta_\epsilon+\delta\big( 1+\Phi_{f_{\epsilon,\delta}}^{\e,\delta}\big) } \cr
			&\leq \frac{\Phi_{f_{\epsilon,\delta}}^{\e,\delta}}{N\rho_{f_{\epsilon,\delta}}*\theta_\epsilon+\delta\big( 1+\Phi_{f_{\epsilon,\delta}}^{\e,\delta}\big) }
			+1 
	\end{align*}
due to $0<\delta<1$. On the other hand, by definition of $\Phi_{f}^{\e,\delta}$, we find
	\begin{align*}
		 \frac{\Phi_{f_{\epsilon,\delta}}^{\e,\delta}}{N\rho_{f_{\epsilon,\delta}}*\theta_\epsilon+\delta\big( 1+\Phi_{f_{\epsilon,\delta}}^{\e,\delta}\big) } 
		&\leq (\rho_{f_{\epsilon,\delta}}*\theta_\epsilon)^{-1}\displaystyle \int_{{\mathbb{T}^N}}\theta_\epsilon(x-y)\left(2\int_{{\mathbb{R}^N}} \left|\omega\right|^2f_{\epsilon,\delta}(y,\omega)\,d\omega\right)dy \cr
		&\leq2 (\rho_{f_{\epsilon,\delta}}*\theta_\epsilon)^{-1}\displaystyle \int_{{\mathbb{T}^N}}\theta_\epsilon(x-y)\left( \int_{{\mathbb{R}^N}}\left|\omega\right|^3 f_{\epsilon,\delta}(y,\omega)\,d\omega\right)^{\frac{2}{3}}\left(\rho_{f_{\epsilon,\delta}}(y)\right)^{\frac{1}{3}}\,dy\cr
		&\leq2 \left( \rho_{f_{\epsilon,\delta}}*\theta_\epsilon\right)^{-\frac{2}{3}}\displaystyle\left( \iint_{{\mathbb{T}^N}\times\mathbb{R}^N}\left|\omega\right|^3 f_{\epsilon,\delta}(y,\omega)\theta_\epsilon(x-y)\,d\omega dy\right)^{\frac{2}{3}} .
	\end{align*}
This implies
\bq	\label{4.8}
(T_{f_{\epsilon,\delta}}^{\e,\delta})^{\frac32} \leq C (\rho_{f_{\epsilon,\delta}}*\theta_\epsilon)^{-1}\displaystyle  \iint_{{\mathbb{T}^N}\times\mathbb{R}^N}\left|\omega\right|^3 f_{\epsilon,\delta}(y,\omega)\theta_\epsilon(x-y)\,d\omega dy     + C.
\eq
 $\bullet$ Estimate of $\rom{1}$: Applying Young's inequality, one finds
	\begin{align*}
		\rom{1}
		&\leq{\frac{1}{2}}\iint_{{\mathbb{T}^N}\times\mathbb{R}^N}3\left( {\frac{1}{3}} |u_{f_{\epsilon,\delta}}^\epsilon |^3 f_{\epsilon,\delta}+{\frac{2}{3}}|v|^3f_{\epsilon,\delta}\right)   dvdx
		-{\frac{3}{2}}\iint_{{\mathbb{T}^N}\times\mathbb{R}^N}|v|^3 f_{\epsilon,\delta}\, dvdx\cr
		&={\frac{1}{2}}\iint_{{\mathbb{T}^N}\times\mathbb{R}^N} |u_{f_{\epsilon,\delta}}^\epsilon |^3 f_{\epsilon,\delta} \,dvdx
		- {\frac{1}{2}}\iint_{{\mathbb{T}^N}\times\mathbb{R}^N}|v|^3 f_{\epsilon,\delta}\, dvdx,
	\end{align*}
	which, combined with  Lemma \ref{lem4.4} and \eqref{4.7}, gives
	\begin{align}\begin{split}
			\label{4.9}
			\int_{{\mathbb{T}^N}} |u_{f_{\epsilon,\delta}}^\epsilon |^3 \rho_{f_{\epsilon,\delta}}\, dx
			&\leq \iint_{{\mathbb{T}^N}\times\mathbb{R}^N}\left(\int_{{\mathbb{T}^N}}\theta_\epsilon(x-y)\frac{\rho_{f_{\epsilon,\delta}}(x)}{\theta_\epsilon*\rho_{f_{\epsilon,\delta}}(x)}dx\right)\left|\omega\right|^3 f_{\epsilon,\delta}(y,\omega) \,d\omega dy\cr
			&\leq C\iint_{{\mathbb{T}^N}\times\mathbb{R}^N}|v|^3 f_{\epsilon,\delta} \,dvdx %\cr
%			&\leq C\iint_{{\mathbb{T}^N}\times\mathbb{R}^N}(1+|v|^3) f_{\epsilon,\delta} \,dvdx.
	\end{split}\end{align}
	for some $C>0$ independent of $\e$ and $\delta$.
	\noindent\newline
	$\bullet$ Estimate of $\rom{2}$: It follows from H\"{o}lder's inequality and Young's inequality that
	\begin{align*}
		\rom{2}
		&\leq{\frac{3(N+1)}{2}}\int_{{\mathbb{T}^N}}T_{f_{\epsilon,\delta}}^{\e,\delta}\left( \int_{{\mathbb{R}^N}}|v|^3 f_{\epsilon,\delta}\, dv\right)^{\frac{1}{3}}\left( \int_{{\mathbb{R}^N}}f_{\epsilon,\delta}\, dv\right)^{\frac{1}{3}} dx\cr
		&\leq{\frac{N+1}{2}}\iint_{{\mathbb{T}^N\times\mathbb{R}^N}} |v|^3 f_{\epsilon,\delta}\, dvdx +3(N+1) \int_{{\mathbb{T}^N}}(T_{f_{\epsilon,\delta}}^{\e,\delta})^{\frac{3}{2}}\rho_{f_{\epsilon,\delta}} \,dx. 
	\end{align*}
	Similarly to \eqref{4.9}, by using Lemma \ref{lem4.4} and \eqref{4.8}, we get
	\begin{align}
		\label{4.10}
		3(N+1) \int_{{\mathbb{T}^N}}\left(T_{f_{\epsilon,\delta}}^\delta\right)^{\frac{3}{2}}\rho_{f_{\epsilon,\delta}}\, dx
%		\leq C\iint_{{\mathbb{T}^N}\times\mathbb{R}^N}|v|^3 f_{\epsilon,\delta} \,dvdx
%		+C\iint_{{\mathbb{T}^N}\times\R^N}f_{\epsilon,\delta}\,dvdx
		\leq C\iint_{{\mathbb{T}^N}\times\mathbb{R}^N}(1+|v|^3) f_{\epsilon,\delta} \,dvdx
	\end{align}
		for some $C>0$ independent of $\e$ and $\delta$.
		
	Combining all of the above estimates gives
	\[
	{d\over dt}\iint_{{\mathbb{T}^N}\times\mathbb{R}^N}|v|^3 f_{\e,\delta}\, dvdx \leq C\iint_{{\mathbb{T}^N}\times\mathbb{R}^N}(1 + |v|^3) f_{\e,\delta}\, dvdx,
	\]
	where $C>0$ is independent of $\e>0$. Finally, we apply Gr\"onwall's lemma to the above to conclude the desired result. 
	\end{proof}
	
	%%%%%%%%%%%%%%%%%%%%%%%%%%%%%%%%%%%%%%%
	%
	%
	%
	%
	%
	%
	%
	%
	%%%%%%%%%%%%%%%%%%%%%%%%%%%%%%%%%%%%%%%

	\section{Global existence of weak solutions}\label{sec:main}	
In this section, we provide the details of the proof for Theorem \ref{main result}.
	%%%%%%%%%%%%%%%%%%%%%%%%%%%%%%%%%%%%%%%
	%
	%
	%
	%
	%
	%
	%
	%
	%%%%%%%%%%%%%%%%%%%%%%%%%%%%%%%%%%%%%%%
\subsection{Strong compactness via velocity averaging}\label{ssec:compt}For the proof of Theorem \ref{main result}, we need some compactness results for the observable quantities. For this, we first provide some $L^p$ integrability of those quantities, which will be used to have the strong compactness via the celebrated velocity averaging lemma. 
	\begin{lemma}
		\label{lem5.1} Suppose that $h$ satisfies
		\begin{align*}
			\|h\|_{L^\infty(\T^N\times\R^N\times [0,T])}\leq M \quad \mbox{and} \quad
			\sup_{0\leq t\leq T}\iint_{{\mathbb{T}^N}\times\mathbb{R}^N}|v|^3 h\, dvdx\leq M
		\end{align*}
		for some $M>0$.		Then there exists a constant $C\equiv C(M)$ such that
		\begin{align*}
			&\|\rho_h\|_{L^\infty(0,T;L^p(\T^N))}\leq C \hspace{7.5mm} \mbox{for every} \quad p\in \lt[1,\frac{N+3}{N}\rt), \\%\label{5.1}\\
			&\|\rho_h u_h\|_{L^\infty(0,T;L^p(\T^N))}\leq C \quad \mbox{for every} \quad p\in \lt[1,\frac{N+3}{N+1}\rt), \quad \mbox{and}\\ %\label{5.2}\\
			&\| N\rho_h T_h+\rho_h|u_h|^2\|_{L^\infty(0,T;L^p(\T^N))}\leq C \quad \mbox{for every} \quad p\in \lt[1,\frac{N+3}{N+2}\rt). %\label{5.3}
		\end{align*}
	\end{lemma}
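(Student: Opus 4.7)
The plan is to use the classical moment-splitting interpolation that trades off the uniform $L^\infty$ bound on $h$ against the third velocity-moment. Set $g(x,t):=\int_{\R^N}|v|^3 h(x,v,t)\,dv$, so the hypothesis gives $g\in L^\infty(0,T;L^1(\T^N))$ with norm controlled by $M$. For each of the three moments $\int|v|^k h\,dv$ with $k\in\{0,1,2\}$, I would split the integral at radius $R>0$ and bound
\[
\int_{|v|\le R}|v|^k h\,dv\;\le\; C\,R^{N+k}\|h\|_{L^\infty},\qquad \int_{|v|>R}|v|^k h\,dv\;\le\; R^{k-3}g(x,t),
\]
then optimize in $R$ by choosing $R^{N+3}\sim g(x,t)/\|h\|_{L^\infty}$. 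This yields the pointwise estimates
\[
\rho_h(x,t)\le C\,g^{N/(N+3)},\qquad |\rho_h u_h|\le C\,g^{(N+1)/(N+3)},\qquad \bigl|N\rho_h T_h+\rho_h|u_h|^2\bigr|\le C\,g^{(N+2)/(N+3)},
\]
where $C$ depends only on $M$ and $N$; for the last quantity I use the identity $N\rho_h T_h+\rho_h|u_h|^2=\int|v|^2 h\,dv$ recorded in the introduction.

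Next, I would take the $L^p(\T^N)$-norm of each pointwise bound. For $\rho_h$ we get $\int_{\T^N}\rho_h^p\,dx\le C\int_{\T^N}g^{pN/(N+3)}\,dx$, and for $p<(N+3)/N$ the exponent $\alpha:=pN/(N+3)$ lies in $[0,1)$, so Jensen's inequality on the torus (of finite measure) gives
\[
\int_{\T^N}g^{\alpha}\,dx\;\le\;|\T^N|^{1-\alpha}\Bigl(\int_{\T^N}g\,dx\Bigr)^{\alpha}\;\le\;C\,M^{\alpha},
\]
uniformly in $t\in[0,T]$. The same interpolation applied with exponents $p(N+1)/(N+3)$ and $p(N+2)/(N+3)$ yields the bounds for $\rho_h u_h$ in $L^p$ for $p<(N+3)/(N+1)$ and for $N\rho_h T_h+\rho_h|u_h|^2$ in $L^p$ for $p<(N+3)/(N+2)$, respectively. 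Taking supremum in $t$ then gives the three stated $L^\infty(0,T;L^p(\T^N))$ bounds.

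I do not expect a genuine obstacle here: the only delicate point is making the optimization in $R$ rigorous (choosing $R^{N+3}=g(x,t)/\|h\|_{L^\infty}$ pointwise in $(x,t)$, with a separate trivial bound when $g(x,t)=0$), and checking that each exponent lands in $[0,1)$ under the stated range of $p$ so that Jensen applies. Beyond that, everything reduces to controlling $\int g\,dx$ by $M$, which is given.
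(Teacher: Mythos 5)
Your proof is correct, but it implements the interpolation differently from the paper. The paper's proof is a single weighted H\"older inequality: writing $h = h^{1/p}\,h^{(p-1)/p}$ and inserting the weight $(1+|v|)^{3/p}$, it gets directly
\[
\Bigl|\int_{\R^N}\varphi(v)h\,dv\Bigr|^p \;\leq\; C\,\|h\|_{L^\infty}^{p-1}\int_{\R^N}(1+|v|)^3h\,dv
\qquad\text{whenever } |\varphi(v)|\leq C|v|^m \text{ and } p<\tfrac{N+3}{N+m},
\]
the constraint on $p$ being exactly what makes $\int |v|^{mp/(p-1)}(1+|v|)^{-3/(p-1)}\,dv$ finite; integrating in $x$ then finishes, since the right-hand side is linear in the third moment. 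You instead use the split-at-radius-$R$ optimization (the same device the paper uses in Lemma~2.1(iii) for $\rho_f\leq C\|f\|_{L^\infty}T_f^{N/2}$), which yields the pointwise bound $\int|v|^k h\,dv\leq C_M\,g^{(N+k)/(N+3)}$ and therefore requires the extra Jensen step on the finite-measure torus to convert a sublinear power of $g$ into a bound via $\int_{\T^N}g\,dx\leq M$. Both routes produce the same exponent ranges; the paper's is slightly more streamlined (no Jensen, no case distinction at $g=0$), while yours makes the $L^\infty$--versus--moment trade-off more transparent and is the more elementary computation. One cosmetic remark: like the paper, you are implicitly using $h\geq 0$ (e.g.\ in $\int_{|v|>R}|v|^kh\,dv\leq R^{k-3}g$), which is harmless since the lemma is only applied to nonnegative distributions, but it is worth stating.
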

	\begin{proof}The proof is similar to that of \cite[Lemma 2.4]{KMT13}, but for the completeness of our work, we state details for readers' convenience.	For any $\varphi(v)$, we observe
	\[
	\lt|\intr \varphi(v) h\,dv \rt| \leq \intr (1 + |v|)^{\frac 3p} h^\frac1p \frac{h^{\frac{p-1}{p}} |\varphi(v)|  }{(1 + |v|)^{\frac 3p}}\,dv \leq \lt( \int (1+ |v|)^3  h\,dv\rt)^\frac1p \lt(\intr \frac{h |\varphi(v)|^{\frac p{p-1}}}{(1+ |v|)^\frac3{p-1}} \,dv\rt)^{\frac{p-1}{p}}.
	\]
	If $|\varphi(v)| \leq C|v|^m$, $m \ge 0$, and $\frac{3 - mp}{p-1} < N$, i.e. $p > \frac{N+3}{N+m}$, then
	\[
	\lt|\intr \varphi(v) h\,dv \rt| \leq C\|h\|_{L^\infty}^{\frac{p-1}{p}} \lt( \int (1+ |v|)^3  h\,dv\rt)^\frac1p.
	\]
	This implies
	\[
	\lt\|\intr \varphi(v) h\,dv \rt\|_{L^p}^p \leq CM
	\]
	for some $C>0$ independent of $M$. We now choose $\varphi(v) = 1, v, |v|^2$ to conclude the desired result. 
	\end{proof}
	
	We next recall the following velocity averaging lemma, whose proof can be found in \cite[Lemma 2.7]{KMT13}. 
\begin{lemma}\label{lem_velo}
Let   $f^m$ and $G^m$ satisfy
\[
\pa_t f^m + v \cdot \nabla_x f^m = \nabla_v^\ell G^m, \quad f^m|_{t=0} = f_0 \in L^p(\T^N \times \R^N)
\]
and $\{G^m\}$ be bounded in $L_{loc}^p(\T^N \times \R^N \times [0,T])$. Suppose that
\[
\sup_{m \in \bbn} \|f^m\|_{L^\infty(\T^N \times \R^N \times (0,T))} + \sup_{m \in \bbn}\| |v|^3 f^m\|_{L^\infty(0,T;L^1(\T^N \times \R^N))}  <\infty.
\]
Then, for any $\varphi(v)$ satisfying $|\varphi(v)| \ls 1+ |v|^2$, the sequence
\[ \left\{ \int_{\R^d} f^m \varphi \,dv \right\}_m \]
is relatively compact in $L^q(\T^N \times (0,T))$ for any $q \in  (1, \frac{N+3}{N+2}  )$.
\end{lemma}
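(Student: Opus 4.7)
The approach is to combine a velocity truncation argument with a classical velocity averaging theorem. I would introduce a smooth radial cutoff $\chi_R(v) = \chi(|v|/R)$ with $\chi \in \mc_c^\infty(\R^N)$, equal to $1$ on $\{|v| \le 1\}$ and supported in $\{|v| \le 2\}$, and decompose
\[
\int_{\R^N} f^m \varphi \, dv = \int_{\R^N} f^m \varphi \chi_R \, dv + \int_{\R^N} f^m \varphi (1 - \chi_R) \, dv =: A_R^m + B_R^m.
\]
The plan is to show that $B_R^m$ tends to $0$ in $L^q(\T^N \times (0,T))$ uniformly in $m$ as $R \to \infty$, while each $A_R^m$ is relatively compact in $L^q(\T^N \times (0,T))$. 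A standard diagonal argument then yields the claimed compactness of the full velocity average.

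For the tail, on $\{|v| \ge R\}$ with $R \ge 1$ we have $|\varphi(v)| \le C(1+|v|^2) \le CR^{-1}(1+|v|^3)$, hence
\[
\|B_R^m\|_{L^\infty(0,T;L^1(\T^N))} \le \frac{C}{R}\sup_m \left\|\int_{\R^N}(1+|v|^3)f^m\,dv\right\|_{L^\infty(0,T;L^1(\T^N))} \le \frac{C}{R}.
\]
On the other hand, since $|\varphi(1-\chi_R)| \le C(1+|v|^2)$ uniformly in $R$, Lemma \ref{lem5.1} supplies a uniform-in-$m$ bound $\|B_R^m\|_{L^\infty(0,T;L^{q_1}(\T^N))} \le C$ for every $q_1 < \tfrac{N+3}{N+2}$, independent of $R$. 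Riesz--Thorin interpolation between the two bounds gives $\|B_R^m\|_{L^q(\T^N\times(0,T))} \le CR^{-\alpha(q)}\to 0$ as $R\to\infty$ for every $q \in [1,q_1)$, covering the entire target range.

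For the compactly-supported piece $A_R^m$, the test function $\varphi\chi_R \in \mc_c^\infty(\R^N)$ is supported in $\{|v| \le 2R\}$. On this set, the hypothesis $\|f^m\|_{L^\infty} \le C$ provides uniform boundedness of $f^m$ in $L^{p}_{loc}(\T^N\times\R^N\times(0,T))$ for every $p \in [1,\infty]$, and $G^m$ is bounded in $L^p_{loc}$ by assumption. Applied to the transport equation $\pa_t f^m + v\cdot\nabla_x f^m = \nabla_v^\ell G^m$ tested against $\varphi\chi_R$, a classical velocity averaging result of DiPerna--Lions--Meyer type (or its $L^p$ variant) yields strong relative compactness of $A_R^m$ in $L^{p_0}(\T^N\times(0,T))$ for some $p_0>1$. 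Combining this $L^{p_0}$ strong-compactness with the uniform $L^{q_1}$-bound on the total average $\int f^m \varphi\,dv$ from Lemma \ref{lem5.1}, interpolation upgrades the convergence to strong convergence in $L^q(\T^N\times(0,T))$ for every $q \in (1,\tfrac{N+3}{N+2})$.

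The main obstacle is the interplay between the unbounded $v$-support of the admissible test functions $\varphi$ and the limited integrability provided by the $|v|^3$-moment alone: one must simultaneously extract compactness from velocity averaging on bounded $v$-sets and quantitatively kill the tails, matching both contributions in the same $L^q$ space. The threshold $q < \tfrac{N+3}{N+2}$ is precisely the ceiling dictated by the quadratic growth of $\varphi$ against the cubic moment, as already visible in Lemma \ref{lem5.1}; any attempt to push beyond it would require a higher-order velocity moment than the one at hand.
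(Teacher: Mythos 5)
The paper does not actually prove this lemma itself; it cites \cite[Lemma 2.7]{KMT13} and remarks only that replacing the finite second velocity moment there by the third moment is a simple modification of that proof. Your argument --- cutoff decomposition, classical averaging lemma on the compactly supported piece, the cubic moment killing the tail at rate $R^{-1}$ in $L^\infty_t L^1_x$, and interpolation against the uniform $L^{q_1}$ bound from Lemma \ref{lem5.1} --- is precisely that standard proof, so the approach matches and is correct; the one imprecision is the claim that $\varphi\chi_R\in\mc_c^\infty(\R^N)$, which fails for merely measurable $\varphi$ of quadratic growth and requires an additional mollification of $\varphi$ on $\{ |v|\le 2R\}$, harmless because of the uniform $L^\infty$ bound on $f^m$ and the finite measure of the truncated velocity support.
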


\begin{remark} Strictily speaking, \cite[Lemma 2.7]{KMT13} takes into account the case in which $f^m$ has a finite second-velocity moment. However, by a simple modification of their proof, it can be readily extended to the case where $f^m$ has a finite third-velocity moment.
\end{remark}

In the rest of this section, we show that the regularized VFP equation \eqref{eqn2.1} satisfies the assumptions that appeared in Lemma \ref{lem_velo}.  
Let
\[
G_{\e,\delta}:=T_{f_{\e,\delta}}^{\e,\delta}\nabla_vf_{\epsilon,\delta}+\big(v-u_{f_{\epsilon,\delta}}^\epsilon\big)f_{\epsilon,\delta}.
\]
Then, it follows from \eqref{eqn2.1} that $f_{\epsilon,\delta}$ satisfies 
\[
\partial_tf_{\epsilon,\delta}+v\cdot\nabla_xf_{\epsilon,\delta} = \nabla_v \cdot G_{\e,\delta}.
\]
To employ the velocity averaging lemma and Lemma \ref{lem_velo}, we show that $G_{\e,\delta}$ is bounded in $L^q_{loc}(\T^N\times\R^N \times (0,T))$ for some $1<q<\infty$. Precisely, we claim that $G_{\e,\delta}$ is bounded in $L^\frac{6}{5}(\T^N\times\R^N \times (0,T))$. For this, we first obtain from \eqref{lp_fed} with $p=\frac{4}{3}$ that 
\bq\label{eqnq04}
\int_{0}^{T}\iint_{\mathbb{T}^N\times\mathbb{R}^N}|f_{\epsilon,\delta}|^{-\frac{2}{3}}T_{f_{\e,\delta}}^{\e,\delta}|\nabla_vf_{\epsilon,\delta}|^2\,dvdxdt < \infty
\eq
uniformly in both $\e$ and $\delta$. We then estimate
	\begin{align*}
		\|G_{\e,\delta}\|^{\frac 65}_{L^\frac65}
		&=\int_{0}^{T}\iint_{{\mathbb{T}^N}\times\R^N}\left|T_{f_{\e,\delta}}^{\e,\delta}\nabla_vf_{\epsilon,\delta}+ (v-u_{f_{\epsilon,\delta}}^\e )f_{\e}\right|^\frac65 dvdxdt\cr
		&\leq C\int_{0}^{T}\iint_{{\mathbb{T}^N}\times\R^N} (T_{f_{\e,\delta}}^{\e,\delta})^\frac65\left|\nabla_vf_{\epsilon,\delta}\right|^\frac65+|v|^\frac65 f_{\epsilon,\delta}^\frac65 + |u_{f_{\epsilon,\delta}}^\e|^\frac65 f_{\epsilon,\delta}^\frac65\,dvdxdt\cr
		&=:\rom{1}+\rom{2}+\rom{3},
	\end{align*}
where
 	\begin{align*}
		\rom{1}
%		&=\int_{0}^{T}\iint_{{\mathbb{T}^N}\times\R^N}\left|T_{f_{\e,\delta}}^{\e,\delta}\right|^q\left|\nabla_vf_{\epsilon,\delta}\right|^q\,dvdxdt\cr
		&=\int_{0}^{T}\iint_{{\mathbb{T}^N}\times\R^N}(T_{f_{\e,\delta}}^{\e,\delta})^\frac{3}{5}f_{\epsilon,\delta}^{\frac{2}{5}}f_{\epsilon,\delta}^{-\frac{2}{5}}(T_{f_{\e,\delta}}^{\e,\delta})^\frac{3}{5}\left|\nabla_vf_{\epsilon,\delta}\right|^\frac{6}{5}\,dvdxdt\cr
		&\leq \left(\int_{0}^{T}\iint_{{\mathbb{T}^N}\times\R^N}(T_{f_{\e,\delta}}^{\e,\delta})^\frac{3}{2}f_{\epsilon,\delta}\,dvdxdt\right)^{\frac{2}{5}}
		\left(\int_{0}^{T}\iint_{{\mathbb{T}^N}\times\R^N}|f_{\epsilon,\delta}|^{-\frac{2}{3}}T_{f_{\e,\delta}}^\delta|\nabla_vf_{\epsilon,\delta}|^2\,dvdxdt\right)^{\frac{3}{5}}.
	\end{align*}
	Then \eqref{4.10} and \eqref{eqnq04} imply $I$ is uniformly bounded in both $\e$ and $\delta$.
%	\begin{align*}
%		\rom{1}&=
%		\int_{0}^{T}\iint_{{\mathbb{T}^N}\times\R^N}(T_{f_{\e,\delta}}^{\e,\delta})^\frac{6}{5}\left|\nabla_vf_{\epsilon,\delta}\right|^\frac{6}{5}\,dvdxdt
%		\leq C.
%	\end{align*}
For $II$, we use Proposition \ref{prop3.1} and H\"older's inequality to get
\begin{align*}
	\rom{2}&=\int_{0}^{T}\iint_{{\mathbb{T}^N}\times\R^N}|v|^\frac{6}{5}f_{\epsilon,\delta}^\frac{3}{5}f_{\epsilon,\delta}^\frac{3}{5}\,dvdxdt\cr
	&\leq\left(\int_{0}^{T}\iint_{{\mathbb{T}^N}\times\R^N}|v|^2f_{\epsilon,\delta}\,dvdxdt\right)^\frac{3}{5}\left(\int_{0}^{T}\iint_{{\mathbb{T}^N}\times\R^N}f_{\epsilon,\delta}^\frac{3}{2}\,dvdxdt\right)^\frac{2}{5}\cr
	&\leq \left(\int_{0}^{T}\iint_{{\mathbb{T}^N}\times\R^N}|v|^2f_{\epsilon,\delta}\,dvdxdt\right)^\frac{3}{5}\left\|f_{\epsilon,\delta}\right\|_{L^{\infty}}^\frac{1}{5}\left(\int_{0}^{T}\iint_{{\mathbb{T}^N}\times\R^N}f_{\epsilon,\delta}\,dvdxdt\right)^\frac{2}{5}\cr
	&\leq C.
\end{align*}
In a similar way, using \eqref{4.9} and H\"older's inequality, we can estimate $\rom{3}$ as
\begin{align*}\begin{split}
		\rom{3}
		&\leq \left(\int_{0}^{T}\iint_{{\mathbb{T}^N}\times\R^N}|u_{f_{\epsilon,\delta}}^\e|^3f_{\epsilon,\delta}\,dvdxdt\right)^\frac{2}{5}\left(\int_{0}^{T}\iint_{{\mathbb{T}^N}\times\R^N}f_{\epsilon,\delta}^\frac{4}{3}\,dvdxdt\right)^\frac{3}{5}\cr
		&\leq \left(\int_{0}^{T}\iint_{{\mathbb{T}^N}\times\R^N}|u_{f_{\epsilon,\delta}}^\e|^3f_{\epsilon,\delta}\,dvdxdt\right)^\frac{2}{5}\left\|f_{\epsilon,\delta}\right\|_{L^{\infty}}^\frac{1}{5}\left(\int_{0}^{T}\iint_{{\mathbb{T}^N}\times\R^N}f_{\epsilon,\delta}\,dvdxdt\right)^\frac{3}{5}\cr
		&\leq C.
\end{split}\end{align*}
Combining all of the above estimates proves that $G_{\e,\delta}$ is bounded in $L^\frac{6}{5}(\T^N\times\R^N \times (0,T))$, and thus by Lemmas  \ref{lem5.1} and \ref{lem_velo}, we have
\[ 
\left\{ \int_{\R^N} f_{\e,\delta} \varphi(v) \,dv \right\}_{\e,\delta} \quad \mbox{with } \varphi(v) = 1, v, |v|^2
\]
is relatively compact in $L^q(\T^N \times (0,T))$ for any $q \in  (1, \frac{N+3}{N+2}  )$.

	%%%%%%%%%%%%%%%%%%%%%%%%%%%%%%%%%%%%%%%
	%
	%
	%
	%
	%
	%
	%
	%
	%%%%%%%%%%%%%%%%%%%%%%%%%%%%%%%%%%%%%%%
\subsection{Passing to the limit $\e \to 0$ (fixed $\delta > 0$)}

We now fix $\delta > 0$ and consider the limit $\e \to 0$. We recall our regularized observable quantities:
\begin{align*}
	u_{f_{\epsilon,\delta}}^\epsilon(x,t):=\frac{\left(\rho_{f_{\epsilon,\delta}} u_{f_{\epsilon,\delta}}\right)*\theta_\epsilon}{\rho_{f_{\epsilon,\delta}}*\theta_\epsilon+\epsilon\big(1+|(\rho_{f_{\epsilon,\delta}} u_{f_{\epsilon,\delta}})*\theta_\epsilon|^2\big)}
	\quad \mbox{and} \quad 
	T_{f_{\e,\delta}}^{\e,\delta}(x,t):=
	\frac{\Phi_{f_{\epsilon,\delta}}^\delta+\delta^2}{N\rho_{f_{\epsilon,\delta}}*\theta_\epsilon+\delta\big( 1+\Phi_{f_{\epsilon,\delta}}^\delta\big) }
\end{align*}
with
\begin{align*}
	\Phi_{f_{\epsilon,\delta}}^{\e,\delta}
	:=\left( N\rho_{f_{\epsilon,\delta}}T_{f_{\epsilon,\delta}}+\rho_{f_{\epsilon,\delta}}|u_{f_{\epsilon,\delta}}|^2\right) * \theta_\epsilon - \frac{\left| \rho_{f_{\epsilon,\delta}}u_{f_{\epsilon,\delta}}*\theta_\epsilon\right|^2}{\rho_{f_{\epsilon,\delta}}*\theta_\epsilon+\delta\big( 1+ | \rho_{f_{\epsilon,\delta}}u_{f_{\epsilon,\delta}}*\theta_\epsilon |^2\big)  }.
\end{align*}
Due to the uniform bound estimates in Proposition \ref{prop3.1},  we find $f_{\delta}\in  L^\infty(0,T; L^1\cap L^\infty(\mathbb{T}^N\times\mathbb{R}^N))$ such that $f_{\epsilon,\delta}$ weakly-$\star$ converges to $f_{\delta}$ in $  L^\infty(0,T; L^1\cap L^\infty(\mathbb{T}^N\times\mathbb{R}^N))$.
This, together with Lemmas \ref{lem5.1} and \ref{lem_velo}, implies  
\begin{align}
	\label{pconv}
	\rho_{f_{\epsilon,\delta}}\rightarrow \rho_{f_{\delta}}, \quad
	\rho_{f_{\epsilon,\delta}}u_{f_{\epsilon,\delta}}\rightarrow \rho_{f_{\delta}}u_{f_{\delta}}, \quad \mbox{and} \quad
	N\rho_{f_{\epsilon,\delta}}T_{f_{\epsilon,\delta}}+\rho_{f_{\epsilon,\delta}} |u_{f_{\epsilon,\delta}} |^2\rightarrow N\rho_{f_{\delta}}T_{f_{\delta}}+\rho_{f_{\delta}}\left|u_{f_{\delta}}\right|^2
\end{align}
in $L^p(\T^N\times(0,T))$ for every $p<\frac{N+3}{N+2} $ up to subsequence. Here $u_{f_{\delta}}$ and $T_{f_{\delta}}$ are defined as
\begin{equation*}
	u_{f_{\delta}}(x,t)
	:=\begin{cases}
		\frac{\rho_{f_{\delta}} u_{f_{\delta}}(x,t)}{\rho_{f_{\delta}}(x,t)} & \text{if $\rho_{f_{\delta}}(x,t) \neq 0$}\cr
		0 & \text{if $\rho_{f_{\delta}}(x,t) = 0$}
	\end{cases}
 \quad \mbox{and} \quad 
	T_{f_{\delta}}(x,t)
	:=\begin{cases}
		\frac{\rho_{f_{\delta}} T_{f_{\delta}}(x,t)}{\rho_{f_\delta}(x,t)} & \text{if $\rho_{f_{\delta}}(x,t) \neq 0$}\cr
		0 & \text{if $\rho_{f_{\delta}}(x,t) = 0$}
	\end{cases},
\end{equation*}
respectively.  In this subsection, our main goal is to show that the limit function ${f_{\delta}}$ satisfies the weak formulation of \eqref{eqn1.1} in the sense of Definition \ref{def1.1}, i.e., for any $\psi(x,v,t):=\phi(x,t)\varphi(v)$ with $\phi(x,t)\in \mc^1(\mathbb{T}^N\times[0,T])$ and $\varphi(v)\in \mc_c^2(\mathbb{R}^N)$ \mbox{with} $\psi(x,v,T)=0$
\begin{align*} 
	&-\iint_{\mathbb{T}^N\times\mathbb{R}^N}f_{0}\psi_0\, dvdx
	-\int_{0}^{T}\iint_{\mathbb{T}^N\times\mathbb{R}^N}{f_{\delta}}\left( \partial_t\psi+v\cdot\nabla_x\psi+(u_{f_{\delta}}-v)\cdot\nabla_v\psi \right) \, dvdxdt\\
	&\quad =\int_{0}^{T}\iint_{\mathbb{T}^N\times\mathbb{R}^N} T_{f_{\delta}}^\delta f_{\delta} \Delta_v\psi\, dvdxdt,
\end{align*}
where  $T_{f_{\delta}}^\delta$ is given by
\[
T_{f_\delta}^\delta=
	\frac{\Phi_{{f_\delta}}^\delta+\delta^2}{N\rho_{{f_\delta}}+\delta( 1+\Phi_{{f_\delta}}^\delta ) }.
\]
Here we consider the test function in $ \mc^1(\mathbb{T}^N\times[0,T])\otimes \mc_c^2(\mathbb{R}^N)$ instead, due to the density from the Stone-Weierstrass theorem. 
Note that by only using the weak compactness of $\{f_{\e,\delta}\}_{\e}$ in $L^\infty(\mathbb{T}^N\times\mathbb{R}^N\times(0,T))$, we can pass to the limit $\epsilon \rightarrow 0$ in the regularlized one \eqref{weak formulation} except $f_{\epsilon,\delta} u_{f_{\epsilon,\delta}}^\epsilon$ and $T_{f_{\epsilon,\delta}}^\delta f_{\epsilon,\delta}$ terms. Thus, for the desired result, it suffices to show that 
\[
f_{\epsilon,\delta} u_{f_{\epsilon,\delta}}^\epsilon \rightharpoonup f_\delta u_{f_\delta} \quad \mbox{and} \quad T_{f_{\epsilon,\delta}}^\delta f_{\epsilon,\delta} \rightharpoonup T_{f_\delta} f_\delta \quad
\quad\mbox{in}\quad L^{\infty}(0,T;L^p(\mathbb{T}^N\times\mathbb{R}^N))
\]
for $p<\frac{N+3}{N+2}$. For this, we first present an auxiliary lemma showing convergence results for convolutions.
\begin{lemma}\label{lem4.2}
The following convergences hold.
	\begin{align*}
		\rho_{f_{\epsilon,\delta}}*\theta_\epsilon\rightarrow\rho_{f_{\delta}},\quad 		\rho_{f_{\epsilon,\delta}} u_{f_{\epsilon,\delta}}*\theta_\epsilon\rightarrow\rho_{f_{\delta}} u_{f_{\delta}}, \quad \mbox{and} \quad  (N\rho_{f_{\epsilon,\delta}} T_{f_{\epsilon,\delta}}+\rho_{f_{\epsilon,\delta}}|u_{f_{\epsilon,\delta}}|^2)*\theta_\epsilon\rightarrow 	N\rho_{f_{\delta}} T_{f_{\delta}}+\rho_{f_{\delta}}|u_{f_{\delta}}|^2
	\end{align*}
	in $L^p(\T^N\times(0,T))$ for $p\in \left[ 1,\frac{N+3}{N+2}\right) $.
\end{lemma}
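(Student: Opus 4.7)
The strategy is to combine the strong convergences already supplied by \eqref{pconv} with the classical approximation-of-identity property of the mollifier $\theta_\epsilon$. For any of the three quantities, denote the unmollified one generically by $g_\epsilon$ and its $L^p(\T^N \times (0,T))$-limit (with $p \in [1, \tfrac{N+3}{N+2})$) by $g$. The natural splitting is
\[
g_\epsilon * \theta_\epsilon - g = (g_\epsilon - g) * \theta_\epsilon + (g * \theta_\epsilon - g),
\]
so the plan is to show that each of the two pieces on the right tends to zero in $L^p(\T^N \times (0,T))$.

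For the first piece, I would apply Young's convolution inequality on $\T^N$ pointwise in $t$, using $\|\theta_\epsilon\|_{L^1(\T^N)}=1$, to obtain
\[
\|(g_\epsilon - g) * \theta_\epsilon\|_{L^p(\T^N \times (0,T))} \leq \|g_\epsilon - g\|_{L^p(\T^N \times (0,T))},
\]
and the right-hand side vanishes as $\epsilon \to 0$ by \eqref{pconv}. For the second piece, first note that the limit $g$ lies in $L^p(\T^N \times (0,T))$: this follows either by lower semicontinuity of the $L^p$-norm under the convergence in \eqref{pconv} or directly from Lemma \ref{lem5.1} applied to $f_\delta$. For a.e.\ $t \in (0,T)$ we then have $g(\cdot, t) \in L^p(\T^N)$, so the standard approximation of the identity gives $g(\cdot, t) * \theta_\epsilon \to g(\cdot, t)$ in $L^p(\T^N)$ as $\epsilon \to 0$. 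Combined with the uniform pointwise domination
\[
\|g(\cdot, t) * \theta_\epsilon - g(\cdot, t)\|_{L^p(\T^N)} \leq 2\|g(\cdot, t)\|_{L^p(\T^N)},
\]
whose right-hand side belongs to $L^p((0,T))$, the dominated convergence theorem in $t$ yields $g * \theta_\epsilon \to g$ in $L^p(\T^N \times (0,T))$.

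Finally, I would apply this abstract scheme to each of the three cases specified in the lemma, taking $(g_\epsilon, g) = (\rho_{f_{\epsilon,\delta}}, \rho_{f_\delta})$, $(\rho_{f_{\epsilon,\delta}} u_{f_{\epsilon,\delta}}, \rho_{f_\delta} u_{f_\delta})$, and $(N\rho_{f_{\epsilon,\delta}} T_{f_{\epsilon,\delta}}+\rho_{f_{\epsilon,\delta}}|u_{f_{\epsilon,\delta}}|^2,\, N\rho_{f_\delta} T_{f_\delta}+\rho_{f_\delta}|u_{f_\delta}|^2)$ respectively. In each case the required strong convergence $g_\epsilon \to g$ in $L^p$ and the membership $g \in L^p$ are exactly what \eqref{pconv} and Lemma \ref{lem5.1} provide. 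I do not expect a serious obstacle here; the only point demanding a little care is that $\theta_\epsilon$ is a mollifier in $x$ alone while convergence is required in the joint $L^p(\T^N \times (0,T))$-norm, which is precisely why the argument is organized as Young's inequality in $x$ (pointwise in $t$) plus dominated convergence in $t$.
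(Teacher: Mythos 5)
Your proposal is correct and follows essentially the same route as the paper: the same splitting $(g_\epsilon - g)*\theta_\epsilon + (g*\theta_\epsilon - g)$, Young's inequality with $\|\theta_\epsilon\|_{L^1}=1$ for the first piece, and the standard approximation-of-identity property for the second, with the convergence of $g_\epsilon$ to $g$ supplied by \eqref{pconv}. Your extra care in handling the time variable (Young's inequality in $x$ pointwise in $t$, then dominated convergence in $t$) is a slightly more detailed rendering of the step the paper simply cites as the classical fact that $\|h*\theta_\epsilon - h\|_{L^p(\T^N\times(0,T))}\to 0$.
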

\begin{proof}
	Recall that for any function $h\in L^p(\T^N\times(0,T))$ with $1\leq p<\infty$, we have
	\begin{align*}
		\left\|h*\theta_\epsilon-h\right\|_{L^p(\T^N\times(0,T))}\rightarrow 0 \quad \mbox{as} \quad \epsilon\rightarrow 0.
	\end{align*}
	Using this, for any $p\in \left[ 1,\frac{N+3}{N+2}\right)$ we get
	\begin{align*}
		\left\|\rho_{f_{\epsilon,\delta}}*\theta_\epsilon-\rho_{f_{\delta}}\right\|_{L^p(\T^N\times(0,T))}
		&\leq\left\|\left( \rho_{f_{\epsilon,\delta}}-\rho_{f_{\delta}}\right) *\theta_\epsilon\right\|_{L^p(\T^N\times(0,T))}
		+\left\|\rho_{f_{\delta}}*\theta_\epsilon-\rho_{f_{\delta}}\right\|_{L^p(\T^N\times(0,T))}\cr
		&\leq\left\| \rho_{f_{\epsilon,\delta}}-\rho_{f_{\delta}}\right\|_{L^p(\T^N\times(0,T))}
		+\left\|\rho_{f_{\delta}}*\theta_\epsilon-\rho_{f_{\delta}}\right\|_{L^p(\T^N\times(0,T))}\cr
		&\rightarrow 0
	\end{align*}
	thanks to \eqref{pconv}. Similarly, we can also obtain the other convergences. 
\end{proof}
%For this, we use a similar arguments as in \cite{YJ22}, \cite{CY20}. \noindent\newline
	%%%%%%%%%%%%%%%%%%%%%%%%%%%%%%%%%%%%%%%
	%
	%
	%
	%
	%
	%
	%
	%
	%%%%%%%%%%%%%%%%%%%%%%%%%%%%%%%%%%%%%%%
\subsubsection{Convergence  $f_{\epsilon,\delta} u_{f_{\epsilon,\delta}}^\epsilon \rightharpoonup f_\delta u_{f_\delta}$ }

First, let us denote 
\begin{align*}
	\rho_{f_{\epsilon,\delta}}^\varphi:=\int_{{\mathbb{R}^N}}f_{\epsilon,\delta} \nabla_v \varphi(v)\, dv,
\end{align*}
where $\varphi\in  \mc_c^1 (\mathbb{R}^N)$.
For any test function $\psi(x,v,t):=\phi(x,t)\varphi(v)$ with $\phi \in  \mc(\mathbb{T}^N\times[0,T])$, we have
\begin{align*}
	\int_0^T \iint_{{\mathbb{T}^N}\times\mathbb{R}^N}f_{\epsilon,\delta} u_{f_{\epsilon,\delta}}^\epsilon \cdot \nabla_v \psi\, dvdxdt
	%		&=\int_0^T\int_{{\mathbb{T}^N}}u_{f_\epsilon}^\epsilon\phi(x)\left( \int_{{\mathbb{R}^N}}f_\epsilon\varphi(v)\, dv\right)  dxdt\\
	&=\int_0^T\int_{{\mathbb{T}^N}}u_{f_{\epsilon,\delta}}^\epsilon \cdot \rho_{f_{\epsilon,\delta}}^\varphi\phi\, dxdt.
\end{align*}
Using the H\"older inequality, we get
\begin{align}\label{6.1}
	\begin{split}
		 \|u_{f_{\epsilon,\delta}}^\epsilon\rho_{f_{\epsilon,\delta}}^\varphi\|_{L^p}
		%		&=\left( \int_{{\mathbb{T}^N}}\left| u_{f_\epsilon}^\epsilon\int_{{\mathbb{R}^N}}f_\epsilon\varphi(v)\, dv\right|^p dx\right)^{\frac{1}{p}}\cr
		&\leq\|\nabla_v \varphi\|_{L^{\infty}}\left( \int_{{\mathbb{T}^N}}\left| u_{f_{\epsilon,\delta}}^\epsilon\rho_{f_{\epsilon,\delta}}\right|^p  dx\right)^{\frac{1}{p}} \leq \|\nabla_v \varphi\|_{L^{\infty}}\|\rho_{f_{\epsilon,\delta}}\|_{L^{\frac{p}{2-p}}}^{\frac{1}{2}}\|(\rho_{f_{\epsilon,\delta}})^{\frac{1}{2}}u_{f_{\epsilon,\delta}}^\epsilon\|_{L^2},
\end{split}	\end{align}
where we assumed $p\in  ( 1,{\frac{N+3}{N+2}} )$ so that  %$2-p\in \left( \frac{N+1}{N+2},1\right)$,  and thus we get $
\[
{\frac{p}{2-p}}\in\left( 1,{\frac{N+3}{N+1}}\right)\subset\left( 1,{\frac{N+3}{N}}\right). 
\]
Also, note that simple manipulation of \eqref{4.7} gives 
\begin{align*}
\begin{split}
		 |u_{f_{\epsilon,\delta}}^\epsilon |^2
		&\leq \left(\rho_{f_{\epsilon,\delta}}*\theta_\epsilon\right)^{-2} \left(\iint_{{\mathbb{T}^N}\times\mathbb{R}^N}\theta_\epsilon(x-y) f_{\epsilon,\delta}(y,\omega,t)\,dyd\omega\right)\iint_{{\mathbb{T}^N}\times\mathbb{R}^N}\theta_\epsilon(x-y)\left|\omega\right|^2 f_{\epsilon,\delta}(y,\omega,t) dyd\omega\cr
		&=(\rho_{f_{\epsilon,\delta}}*\theta_\epsilon)^{-1} \iint_{{\mathbb{T}^N}\times\mathbb{R}^N}\theta_\epsilon(x-y)\left|\omega\right|^2 f_{\epsilon,\delta}(y,\omega,t)\,dyd\omega,
\end{split}\end{align*}
and this provides
\begin{align*}\begin{split}
		\int_{{\mathbb{T}^N}} |u_{f_{\epsilon,\delta}}^\epsilon |^2\rho_{f_{\epsilon,\delta}}\, dx
		&\leq \iint_{{\mathbb{T}^N}\times\mathbb{R}^N}\left(\int_{{\mathbb{T}^N}}\theta_\epsilon(x-y)\frac{\rho_{f_{\epsilon,\delta}}}{\theta_\epsilon*\rho_{f_{\epsilon,\delta}}}dx\right)\left|\omega\right|^2f_{\epsilon,\delta} \,d\omega dy \leq C\iint_{{\mathbb{T}^N}\times\mathbb{R}^N}|v|^2f_{\epsilon,\delta}\, dvdx.
\end{split}\end{align*}
Thus, by Proposition \ref{prop3.1} and Lemma \ref{lem5.1}, the right-hand side of \eqref{6.1} is bounded. Hence, there exists a function $m_\delta \in L^{\infty}(0,T;L^p(\mathbb{T}^N))$ such that
\begin{align*}
	u_{f_{\epsilon,\delta}}^\epsilon\rho_{f_{\epsilon,\delta}}^\varphi \rightharpoonup 
	m_\delta
	\quad
	\mbox{in}
	\quad
	L^{\infty}(0,T;L^p(\mathbb{T}^N))
	\quad
	\mbox{for all}\quad
	p\in \left( 1,\frac{N+3}{N+2}\right)
\end{align*}
up to subsequence as $\e \to 0$. To identify the limit $m_\delta$, we consider a set $E_\delta$ defined as
\bq\label{e_delta}
	E_\delta:=\left\lbrace (x,t)\in \mathbb{T}^N\times [0,T] : \rho_{f_{\delta}}(x,t)\neq 0\right\rbrace.
\eq
Then, by \eqref{pconv}, \eqref{6.1} and above estimates, we observe
\begin{align*}
	\left\|u_{f_{\epsilon,\delta}}^\epsilon\rho_{f_{\epsilon,\delta}}^\varphi\right\|_{L^p(\T^N \times [0,T] \setminus E_\delta)}
	&\leq C \left\|\rho_{f_{\epsilon,\delta}}\right\|_{L^{\frac{p}{2-p}}(\T^N \times [0,T] \setminus E_\delta)}^{\frac{1}{2}}
	\rightarrow
	0
	%	\quad\mbox{for any $\delta>0$ },
\end{align*}
as $\epsilon\rightarrow 0,$ where we assumed $p\in  ( 1,{\frac{2N+6}{2N+5}} )$ so that $\frac{p}{2-p}<\frac{N+3}{N+2}$.  
This shows that it is enough to obtain 
\begin{align*}
	m_\delta=u_{f_{\delta}}\rho_{f_{\delta}}^\varphi
	\quad \mbox{
		whenever}
	\quad
	\rho_{f_{\delta}}>0, 		\quad
	\mbox{	with }\quad
	\rho_{f_{\delta}}^\varphi=\int_{{\mathbb{R}^N}}f_{\delta}\varphi\,dv
	\quad \mbox{and}\quad
	\rho_{f_{\delta}} u_{f_{\delta}}=\int_{{\mathbb{R}^N}}vf_{\delta}\,dv.
\end{align*}
To prove this, for any $\gamma > 0$, let us consider a set
\[%\begin{align}\label{e delta}
	E_{\delta}^\gamma:=\left\lbrace (x,t)\in \mathbb{T}^N\times [0,T] :  \rho_{f_{\delta}}(x,t)>\gamma\right\rbrace .
\]%\end{align}
By Egorov's theorem and the compactness of $\rho_{f_{\epsilon,\delta}}$ and $\rho_{f_{\epsilon,\delta}}*\theta_\epsilon$, for any $\eta>0$, there exists a set $C_\eta\subset E_{\delta}^\gamma$ with $|E_{\delta}^\gamma\backslash C_\eta |<\eta$ on which both $\rho_{f_{\epsilon,\delta}}$ and $\rho_{f_{\epsilon,\delta}}*\theta_\epsilon$ uniformly converge to $\rho_{f_{\delta}}$ as $\epsilon\rightarrow 0$. Then for sufficiently small $\epsilon>0$, we have
\begin{align*}
	\rho_{f_{\epsilon,\delta}}*\theta_\epsilon>\frac{\gamma}{2}
	\quad \mbox{in}
	\quad
	C_\eta,
\end{align*}
and we pass to the limit on $C_\eta$ to get
\begin{align*}
	u_{f_{\epsilon,\delta}}^\epsilon\rho_{f_{\epsilon,\delta}}^\varphi
	=\frac{(\rho_{f_{\epsilon,\delta}} u_{f_{\epsilon,\delta}})*\theta_\epsilon}{\epsilon\big(1+\left|\left(\rho_{f_{\epsilon,\delta}} u_{f_{\epsilon,\delta}}\right)*\theta_\epsilon\right|^2\big)+\rho_{f_{\epsilon,\delta}}*\theta_\epsilon} \rho_{f_{\epsilon,\delta}}^\varphi
	\rightarrow
	m_\delta =u_{f_{\delta}}\rho_{f_{\delta}}^\varphi
	\quad
	\mbox{in} 
	\quad
	C_\eta,
\end{align*}
and this asserts
\begin{align*}
	m_\delta =u_{f_{\delta}}\rho_{f_{\delta}}^\varphi
	\quad
	\mbox{on}
	\quad
	E_\delta
\end{align*}
since $\eta>0$ and $\gamma>0$ are arbitrary. This implies
\begin{align*}
	\int_{0}^{T}\iint_{{\mathbb{T}^N}\times\mathbb{R}^N}f_{\epsilon,\delta} u_{f_{\epsilon,\delta}}^\epsilon \cdot \nabla_v \psi\, dvdxdt
	\rightarrow
	\int_{0}^{T}\int_{{\mathbb{T}^N}}u_{f_{\delta}}\rho_{f_{\delta}}^\varphi\phi\, dxdt
	=\int_{0}^{T}\iint_{{\mathbb{T}^N}\times\mathbb{R}^N}f_{\delta}u_{f_{\delta}} \cdot \nabla_v  \psi\, dvdxdt
\end{align*}
for any test functions of the form $\psi(x,v,t)=\phi(x,t)\varphi(v)$.

	%%%%%%%%%%%%%%%%%%%%%%%%%%%%%%%%%%%%%%%
	%
	%
	%
	%
	%
	%
	%
	%
	%%%%%%%%%%%%%%%%%%%%%%%%%%%%%%%%%%%%%%%
\subsubsection{Convergence  $T_{f_{\epsilon,\delta}}^{\e,\delta} f_{\epsilon,\delta} \rightharpoonup T_{f_\delta}^\delta f_\delta$ } 
Similarly as before, let us denote
\begin{align*}
	\tilde{\rho}_{f_{\epsilon,\delta}}^\varphi=\int_{{\mathbb{R}^N}}f_{\epsilon,\delta}\Delta_v\varphi(v)\, dv.
\end{align*}
For any test function $\psi(x,v,t):=\phi(x,t)\varphi(v)$ with $\phi \in  \mc(\mathbb{T}^N\times[0,T])$ and $\varphi\in \mc_c^{2}(\mathbb{R}^N)$, we have
\begin{align*}
	\iint_{{\mathbb{T}^N}\times\mathbb{R}^N}T_{f_{\epsilon,\delta}}^{\e,\delta}\Delta_vf_{\epsilon,\delta} \psi\, dvdx
	&=\int_{{\mathbb{T}^N}}T_{f_{\epsilon,\delta}}^{\e,\delta}\tilde{\rho}_{f_{\epsilon,\delta}}^\varphi\phi\, dx.
\end{align*}
Then, using the H\"older inequality yields
\begin{align}\label{6.6}\begin{split}
		\left\|T_{f_{\epsilon,\delta}}^{\e,\delta}\tilde{\rho}_{f_{\epsilon,\delta}}^\varphi\right\|_{L^p}
		%&=\left( \int_{{\mathbb{T}^N}}\left| T_{f_{\epsilon,\delta}}^\epsilon\int_{{\mathbb{R}^N}}f_{\epsilon,\delta}\Delta_v\varphi(v)\, dv\right|^p dx\right)^{\frac{1}{p}}\cr
		&\leq\|\Delta_v\varphi\|_{L^{\infty}}\left( \int_{{\mathbb{T}^N}}\left| T_{f_{\epsilon,\delta}}^{\e,\delta}\rho_{f_{\epsilon,\delta}}\right|^p  dx\right)^{\frac{1}{p}}\cr
		&\leq  \|\Delta_v\varphi\|_{L^{\infty}}\|\rho_{f_{\epsilon,\delta}}\|_{L^{\frac{p}{3-2p}}}^{\frac{1}{3}}\|(\rho_{f_{\epsilon,\delta}})^{\frac{1}{2}}(T_{f_{\epsilon,\delta}}^{\e,\delta})^{\frac{3}{4}}\|_{L^2}^{\frac{4}{3}}.
\end{split}\end{align}
Note that
\[
{\frac{p}{3-2p}}\in\left( 1,{\frac{N+3}{N}}\right) \quad \mbox{for} \quad p\in \left( 1,{\frac{N+3}{N+2}}\right),
\]
and thus the right-hand side of \eqref{6.6} is bounded due to \eqref{4.10} and Lemma \ref{lem5.1}. This deduces that there exists a function $\tilde{m}_\delta \in L^{\infty}(0,T;L^p(\mathbb{T}^N))$ such that
\begin{align*}
	T_{f_{\epsilon,\delta}}^{\epsilon,\delta}\rho_{f_{\epsilon,\delta}}^\varphi \rightharpoonup 
	\tilde{m}_\delta \quad \mbox{in}	\quad	L^{\infty}(0,T;L^p(\mathbb{T}^N))
	\quad
	\mbox{for  all}\quad
	p\in \left( 1,{\frac{N+3}{N+2}}\right)
\end{align*}
up to subsequence as $\e \to 0$. On the other hand,
\bq\label{tde_con}
	\left\|T_{f_{\epsilon,\delta}}^{\epsilon,\delta}\rho_{f_{\epsilon,\delta}}^\varphi\right\|_{L^p(\T^N \times [0,T] \setminus E_\delta)}
	\leq C \left\|\rho_{f_{\epsilon,\delta}}\right\|_{L^{\frac{p}{3-2p}}(\T^N \times [0,T] \setminus E_\delta)}^{\frac{1}{3}}
	\rightarrow
	0
	\quad \mbox{ as}
	\quad
	\epsilon\rightarrow 0
\eq
due to \eqref{6.6}, where we assumed $p\in  ( 1,{\frac{3N+9}{3N+8}} )$ so that $\frac{p}{3-2p}<\frac{N+3}{N+2}$.   Thus it is sufficient to show
\begin{align*}
	\tilde{m}=T_{f_\delta}^\delta\rho_{f_\delta}^\varphi
	\quad \mbox{
		whenever}
	\quad
	\rho_{f_\delta}>0,
\end{align*}
where
\[
	\rho_{f_\delta}^\varphi=\int_{{\mathbb{R}^N}}{f_\delta}\Delta_v\varphi\, dv,
	\quad
	T_{f_\delta}^\delta=
	\frac{\Phi_{{f_\delta}}^\delta+\delta^2}{N\rho_{{f_\delta}}+\delta ( 1+\Phi_{{f_\delta}}^\delta ) },
	\]
	and
\[
	\Phi_{{f_\delta}}^\delta
	=N\rho_{f_\delta}T_{{f_\delta}}+\rho_{{f_\delta}}|u_{{f_\delta}}|^2 - \frac{\left| \rho_{{f_\delta}}u_{{f_\delta}}\right|^2}{\rho_{{f_\delta}}+\delta ( 1+\left| \rho_{{f_\delta}}u_{{f_\delta}}\right|^2 )  }.
	%		N\rho_f T_f=\int_{{\mathbb{R}^N}}|v-u_f|^2f\,dv.
\]
For this, we again use the set $E_\delta^\gamma$ and apply the Egorov theorem. Then, for any $\eta>0$, there exists a set $C_\eta\subset E_\delta^\gamma$ with $|E_\delta^\gamma\backslash C_\eta |<\eta$ on which both $\rho_{f_{\epsilon,\delta}}$ and $\rho_{f_{\epsilon,\delta}}*\theta_\epsilon$ uniformly converge to $\rho_{f_\delta}$ as $\epsilon\rightarrow 0$. Thus for sufficiently small $\epsilon>0$, we have
\begin{align*}
	\rho_{f_{\epsilon,\delta}}*\theta_\epsilon>{\frac{\gamma}{2}}
	\quad \mbox{in}
	\quad
	C_\eta,
\end{align*}
and we pass to the limit on $C_\eta$ to get
\begin{align}\label{novel2}\begin{split}
	\Phi_{f_{\epsilon,\delta}}^{\e,\delta}
	&=\left( N\rho_{f_{\epsilon,\delta}}T_{f_{\epsilon,\delta}}+\rho_{f_{\epsilon,\delta}}|u_{f_{\epsilon,\delta}}|^2\right) * \theta_\epsilon - \frac{\left| \rho_{f_{\epsilon,\delta}}u_{f_{\epsilon,\delta}}*\theta_\epsilon\right|^2}{\rho_{f_{\epsilon,\delta}}*\theta_\epsilon+\delta ( 1+ | \rho_{f_{\epsilon,\delta}}u_{f_{\epsilon,\delta}}*\theta_\epsilon |^2 )  }\cr
	&\rightarrow N\rho_{{f_\delta}}T_{{f_\delta}}+\rho_{{f_\delta}}|u_{{f_\delta}}|^2 - \frac{ | \rho_{{f_\delta}}u_{{f_\delta}} |^2}{\rho_{{f_\delta}}+\delta ( 1+\left| \rho_{{f_\delta}}u_{{f_\delta}}\right|^2 )  }\quad
	\mbox{in} \quad
	C_\eta,
\end{split}\end{align}
yielding
\begin{align*}
	T_{f_{\epsilon,\delta}}^{\epsilon,\delta}\rho_{f_{\epsilon,\delta}}^\varphi
	=\frac{\Phi_{f_{\epsilon,\delta}}^{\epsilon,\delta}+\delta^2}{N\rho_{f_{\epsilon,\delta}}*\theta_\epsilon+\delta\big( 1+\Phi_{f_{\epsilon,\delta}}^{\epsilon,\delta} \big)}\rho_{f_{\epsilon,\delta}}^\varphi
	\rightarrow
	\tilde{m}_\delta =T_{f_\delta}^\delta\rho_{f_\delta}^\varphi
	\quad
	\mbox{in} \quad
	C_\eta.
\end{align*}
This asserts
\begin{align*}
	\tilde{	m}_\delta=T_{f_\delta}^\delta\rho_{f_\delta}^\varphi
	\quad\mbox{ on}
	\quad
	E_\delta,
\end{align*}
since $\eta>0$ and $\gamma>0$ were arbitrary.
Hence we have
\begin{align*}
	\int_{0}^{T}\iint_{{\mathbb{T}^N}\times\mathbb{R}^N}T_{f_{\epsilon,\delta}}^\delta f_{\epsilon,\delta} \Delta_v\psi\,dvdxdt
	\rightarrow
	\int_{0}^{T}\int_{{\mathbb{T}^N}}T_{f_\delta}^\delta\rho_{f_\delta}^\varphi\phi\, dxdt=\int_{0}^{T}\iint_{{\mathbb{T}^N}\times\mathbb{R}^N}T_{f_\delta}^\delta  {f_\delta} \Delta_v\psi\, dvdxdt
\end{align*}
for any test functions of the form $\psi(x,v,t)=\phi(x,t)\varphi(v)$.

	%%%%%%%%%%%%%%%%%%%%%%%%%%%%%%%%%%%%%%%
	%
	%
	%
	%
	%
	%
	%
	%
	%%%%%%%%%%%%%%%%%%%%%%%%%%%%%%%%%%%%%%%
\subsection{Passing to the limit $\delta \to 0$ \& proof of the existence part of Theorem \ref{main result}}
 
 We now send $\delta \to 0$ and thereby conclude the proof of the existence part of Theorem \ref{main result}.  By using the uniform bound estimates and velocity averaging lemma again, there exists $f\in  L^\infty(0,T; L^1\cap L^\infty(\mathbb{T}^N\times\mathbb{R}^N))$ such that $f_{\delta}$ weakly-$\star$ converges to $f$ in $  L^\infty(0,T; L^1\cap L^\infty(\mathbb{T}^N\times\mathbb{R}^N))$. Thus, by Lemma \ref{lem5.1} and Lemma \ref{lem_velo}, we obtain 
\begin{align}
\label{pconv2}
\rho_{f_\delta}\rightarrow \rho_{f}, \quad
\rho_{f_\delta}u_{f_\delta}\rightarrow \rho_{f}u_{f}, \quad \mbox{and} \quad
N\rho_{f_\delta}T_{f_\delta}+\rho_{f_\delta}\left|u_{f_\delta}\right|^2\rightarrow N\rho_{f}T_{f}+\rho_{f}\left|u_{f}\right|^2
\end{align}
in $L^p(\T^N\times(0,T))$ for every $p<\frac{N+3}{N+2} $ up to subsequence as $\delta\rightarrow 0$. Here we recall 
	\begin{equation*}
		u_{f}(x,t)
		=\begin{cases}
			\frac{\rho_f u_f(x,t)}{\rho_f(x,t)} & \text{if $\rho_f(x,t) \neq 0$}\cr
			0 & \text{if $\rho_f(x,t) = 0$}
		\end{cases}
		\quad \mbox{and} \quad 
		T_{f}(x,t)
		=\begin{cases}
			\frac{\rho_f T_f(x,t)}{\rho_f(x,t)} & \text{if $\rho_f(x,t) \neq 0$}\cr
			0 & \text{if $\rho_f(x,t) = 0$}
		\end{cases}.
	\end{equation*}
Consider a set
\bq\label{de_ee}
	E:=\left\lbrace (x,t)\in \mathbb{T}^N\times (0,T)\; | \; \rho_{f}(x,t)\neq 0\right\rbrace.
\eq
Then, due to the convergence \eqref{pconv2}, we obtain
\[
 \iint_{({\mathbb{T}^N}\times(0,T)) \setminus E} \lt(\int_{\R^N }f_{\delta}u_{f_{\delta}} \cdot \nabla_v  \psi\, dv\rt)dxdt \to 0
\]
as $\delta\to 0$. On the other hand, we observe
\[
u_{f_\delta} = \frac{\rho_{f_\delta} u_{f_\delta}}{\rho_{f_\delta}} \to \frac{\rho_f u_f}{\rho_f} = u_f \quad \mbox{a.e. on $E$}
\]
and
\[
\int_{\R^N} f_\delta \nabla_v \psi\,dv \to \int_{\R^N} f \nabla_v \psi\,dv \quad \mbox{a.e. on $\T^N \times (0,T)$}
\]
as $\delta \to 0$. Moreover, we 	note that $\rho_{f_\delta}u_{f_\delta}\rightarrow \rho_{f}u_{f}$ in $ L^1(\T^N \times (0,T))$, and 
\begin{align*}
\left|\int_{{\mathbb{R}^N}}f_{\delta}u_{f_{\delta}} \cdot \nabla_v  \psi\, dv\right|
\leq \left|u_{f_{\delta}}\right||\nabla_v  \psi|\left|\int_{{\mathbb{R}^N}}f_{\delta}\,dv\right|
=|\nabla_v  \psi|\left|\rho_{f_\delta}u_{f_\delta}\right|
\leq C\left|\rho_{f_\delta}u_{f_\delta}\right|.
\end{align*} Thus, by employing the generalized Lebesgue-dominated convergence theorem, we deduce
\begin{align*}
	\int_{0}^{T}\iint_{{\mathbb{T}^N}\times\mathbb{R}^N}f_{\delta}u_{f_{\delta}} \cdot \nabla_v  \psi\, dvdxdt
	\rightarrow
	\int_{0}^{T}\iint_{{\mathbb{T}^N}\times\mathbb{R}^N}fu_{f} \cdot \nabla_v  \psi\, dvdxdt
\end{align*}
as $\delta\to 0$. We next show that
\bq\label{conv_tem}
	\int_{0}^{T}\iint_{{\mathbb{T}^N}\times\mathbb{R}^N}T_{f_\delta}^\delta  {f_\delta} \Delta_v\psi\, dvdxdt
	\rightarrow
	\int_{0}^{T}\iint_{{\mathbb{T}^N}\times\mathbb{R}^N}T_f  f \Delta_v\psi\, dvdxdt
\eq
as $\delta\to 0$ for any test functions of the form $\psi(x,v,t)=\phi(x,t)\varphi(v)$. For the proof, we use a similar argument as the above. Note that for $\delta < 1$
\[
\rho_{f_\delta}T_{f_\delta}^\delta = \frac{\rho_{f_\delta}(\Phi_{f_\delta}^\delta+\delta^2)}{N\rho_{f_\delta}+\delta\big( 1+\Phi_{f_\delta}^\delta\big) } \leq \frac 1N\Phi_{f_\delta}^\delta + \rho_{f_\delta}
\]
and
\begin{align*}%	\label{re2}
	\Phi_{{f_\delta}}^\delta
	=N\rho_{{f_\delta}}T_{{f_\delta}}+\rho_{{f_\delta}}|u_{{f_\delta}}|^2 - \frac{\left| \rho_{{f_\delta}}u_{{f_\delta}}\right|^2}{\rho_{{f_\delta}}+\delta ( 1+\left| \rho_{{f_\delta}}u_{{f_\delta}}\right|^2 )  }
	\leq N\rho_{{f_\delta}}T_{{f_\delta}}+\rho_{{f_\delta}}|u_{{f_\delta}}|^2.
%	=\int_{{\mathbb{R}^N}}|v|^2{f_\delta}\,dv
\end{align*}
Using the above bound estimates, we find
\begin{align*}
	\left|T_{f_\delta}^\delta\int_{{\mathbb{R}^N}}{f_\delta}\Delta_v\psi\,dv\right|
	\leq T_{f_\delta}^\delta\int_{{\mathbb{R}^N}}{f_\delta}\left|\Delta_v\psi\right|\,dv
	\leq C\left(\frac{\Phi_{{f_\delta}}^\delta}{N\rho_{{f_\delta}}}+1\right)\rho_{{f_\delta}}
	\leq C\left(N\rho_{{f_\delta}}T_{{f_\delta}}+\rho_{{f_\delta}}|u_{{f_\delta}}|^2\right)+C\rho_{{f_\delta}},
\end{align*}
and this, together with \eqref{pconv2}, gives
\[
 \left|\iint_{({\mathbb{T}^N}\times(0,T)) \setminus E} \int_{\R^N }T_{f_\delta}^\delta  {f_\delta} \Delta_v\psi\,dvdxdt \right|\leq C  \iint_{({\mathbb{T}^N} \times(0,T)) \setminus E} \rho_{{f_\delta}}T_{{f_\delta}}+\rho_{{f_\delta}}|u_{{f_\delta}}|^2 + \rho_{f_\delta}\,dxdt \to 0
\]
as $\delta \to 0$. On the other hand, on $E$, we deduce
\begin{align*}
	\Phi_{{f_\delta}}^\delta
	=N\rho_{{f_\delta}}T_{{f_\delta}}+\rho_{{f_\delta}}|u_{{f_\delta}}|^2 - \frac{\left| \rho_{{f_\delta}}u_{{f_\delta}}\right|^2}{\rho_{{f_\delta}}+\delta\left( 1+\left| \rho_{{f_\delta}}u_{{f_\delta}}\right|^2\right)  }
	\rightarrow
	N\rho_{f}T_{f}+\rho_{f}|u_{f}|^2 - \frac{\left| \rho_{f}u_{f}\right|^2}{\rho_{f}  }
	=N\rho_{f}T_{f}
	\quad
	\mbox{a.e.},
\end{align*}
which implies
\begin{align*}
	T_{f_\delta}^\delta=
	\frac{\Phi_{{f_\delta}}^\delta+\delta^2}{N\rho_{{f_\delta}}+\delta\big( 1+\Phi_{{f_\delta}}^\delta\big) }
	\rightarrow 
	\frac{N\rho_{f}T_{f}}{N\rho_{f}}
	=T_f
	\quad
	\mbox{a.e.}
\end{align*}
Moreover, since
	\begin{align*}
	\rho_{{f_\delta}}
	\to
	\rho_f\quad\mbox{and}\quad 	
	N\rho_{{f_\delta}}T_{{f_\delta}}+\rho_{{f_\delta}}|u_{{f_\delta}}|^2
	\to
	N\rho_{f}T_{f}+\rho_{f}\left|u_{f}\right|^2	\quad\mbox{a.e.}\quad\mbox{and}\quad \mbox{in }L^1\big(\T^N\times(0,T)\big),
	\end{align*}
we can again use the generalized Lebesgue-dominated convergence theorem to conclude the desired convergence result \eqref{conv_tem}. Therefore, the limit function $f$ is the weak solution of the equation \eqref{eqn1.1} in the sense of Definition \ref{def1.1}.

%%%%%%%%%%%%%%%%%%%%%%%%%%%%%%%%%%%%%%%%%%%%%%%%%%%%%%%%
	%
	%
	%
	%
	%
	%
	%
	%
	%%%%%%%%%%%%%%%%%%%%%%%%%%%%%%%%%%%%%%%%%%%%%%%%%%%%%%%%
	\subsection{Proof of Theorem \ref{main result}: conservation laws \& $H$-theorem}

 	%%%%%%%%%%%%%%%%%%%%%%%%%%%%%%%%%%%%%%%%%%%%%%%%%%%%%%%%
	%
	%
	%
	%
	%
	%
	%
	%
	%%%%%%%%%%%%%%%%%%%%%%%%%%%%%%%%%%%%%%%%%%%%%%%%%%%%%%%%	
\subsubsection{Conservation laws}\label{ssec:cons}
	Now we show that our weak solutions satisfy the following conservation laws:
	\begin{equation*}
	\iint_{\mathbb{T}^N\times\mathbb{R}^N} (1,v,|v|^2)f\,dvdx=\iint_{\mathbb{T}^N\times\mathbb{R}^N} (1,v,|v|^2)f_0\,dvdx.
	\end{equation*}
	Here, we only provide the details of the proof of energy conservation. The conservation of mass and momentum can be proved by using almost the same argument. 
	
	Let $R>1$ be given and consider a function $\varphi_R \in \mc^2(\R^d)$ satisfying 
	\[
	\chi_{B(0,R)} \leq \varphi_R \leq \chi_{B(0,2R)}, \quad |\nabla \varphi_R| \leq \frac {c_\varphi}R, \quad \mbox{and} \quad |\nabla^2 \varphi_R| \leq  \frac {c_\varphi}{R^2}
	\]
	for some $c_\varphi >0$ independent of $R$. 	We then take $|v|^2 \varphi_R(v)$ as a test function in the weak formulation for the regularized equation that appeared in \eqref{eqn2.1} to deduce 
\begin{align}\label{cons}
\begin{aligned}
& \iint_{\T^N \times \R^N} f_{\e,\delta}(t)|v|^2 \varphi_R(v)\,dvdx - \iint_{\T^N \times \R^N} f_{0,\e} |v|^2 \varphi_R(v)\,dvdx\cr
&\quad = \int_0^t \iint_{\T^N \times \R^N} f_{\e,\delta}(u^\epsilon_{f_{\e,\delta}} - v) \cdot \lt(2v \varphi_R(v) + |v|^2 \nabla \varphi_R(v) \rt)dvdxds\cr
&\qquad + \int_0^t \iint_{\T^N \times \R^N}  f_{\e,\delta} T^{\e,\delta}_{f_{\e,\delta}} \lt( 2N \varphi_R(v) + 4v \cdot\nabla \varphi_R(v) + |v|^2 \Delta \varphi_R(v)\rt) dvdxds.
\end{aligned}
\end{align}
Here, we first readily obtain 
\begin{align*}
&\int_0^t \iint_{\T^N \times \R^N}\lt( 2v  \cdot f_{\e,\delta}(u^{\e,\delta}_{f_\epsilon} - v)\varphi_R(v)   +  2N f_{\e,\delta} T^{\e,\delta}_{f_{\e,\delta}}  \varphi_R(v)\rt)dvdxds \cr
&\quad  \to \int_0^t \iint_{\T^N \times \R^N}\lt( 2v  \cdot f(u_f - v)\varphi_R(v)   +  2N f T_f  \varphi_R(v)\rt)dvdxds
\end{align*}
as $\epsilon, \delta \to 0$ due to weak-$\star$ convergences 
\[
f_{\e,\delta} u_{f_{\e,\delta}}^\epsilon \rightharpoonup fu_f \quad \mbox{and} \quad T_{f_{\e,\delta}}^{\e,\delta} f_{\e,\delta} \rightharpoonup T_f f \quad \mbox{in $L^\infty(0,T; L^1(\T^N \times \R^N))$}.
\]
This, together with noticing $|\varphi_R(v) - 1| \leq \frac{c_\varphi|v|}{R}$ by mean value theorem, further implies
\begin{align*}
&\lt|\int_0^t \iint_{\T^N \times \R^N}\lt( 2v  \cdot f(u_f - v)\varphi_R(v)   +  2N f T_f  \varphi_R(v)\rt)dvdxds\rt|\cr
&\quad \leq \frac{Cc_\varphi}R\int_0^t \iint_{\T^N \times \R^N} f |v|\lt( |u_f||v| + |v|^2 + T_f)\rt) dvdxds \leq \frac{Cc_\varphi}R\int_0^t \iint_{\T^N \times \R^N} f |v|^3 \,dvdxds \leq \frac{Cc_\varphi}R \to 0
\end{align*}
as $R \to \infty$, where we used 
\[
\int_{\R^N} v \cdot (u_f - v)f + Nf T_f\,dv= 0
\]
and the arguments in the proof of Proposition \ref{prop3.1}. For the other terms on the right-hand side of \eqref{cons}, similarly, we obtain 
\[
\lt|\int_0^t \iint_{\T^N \times \R^N} f_{\e,\delta}(u^\epsilon_{f_{\e,\delta}} - v) \cdot  |v|^2 \nabla \varphi_R(v) \,dvdxds\rt| \leq \frac{Cc_\varphi}R\int_0^t \iint_{\T^N \times \R^N} f_{\e,\delta}|v|^3 \,dvdxds \leq \frac{Cc_\varphi}R \to 0
\]
and
\begin{align*}
&\lt|\int_0^t \iint_{\T^N \times \R^N}  f_{\e,\delta} T^\epsilon_{f_{\e,\delta}} \lt(4v \cdot\nabla \varphi_R(v) + |v|^2 \Delta \varphi_R(v)\rt) dvdxds\rt| \cr
&\quad \leq \frac{Cc_\varphi}{R} \int_0^t \iint_{\T^N \times \R^N}  f_{\e,\delta} T^\epsilon_{f_{\e,\delta}} |v| \,dvdxds  \leq \frac{Cc_\varphi}R\int_0^t \iint_{\T^N \times \R^N} f_{\e,\delta}|v|^3 \,dvdxds \leq \frac{Cc_\varphi}R \to 0
\end{align*}
as $R \to \infty$. This shows that the right-hand side of \eqref{cons} converges to zero as $\epsilon,\delta \to 0$ and $R \to \infty$. Thus, it now remains to show that the left-hand side converges to 
\[
\iint_{\T^N \times \R^N} f(t) |v|^2\,dvdx - \iint_{\T^N \times \R^N} f_0 |v|^2\,dvdx
\]
 as $\epsilon,\delta \to 0$ and $R \to \infty$. Due to our choice of regularized initial data $f_{0,\e}$, we can easily find that 
\[
\iint_{\T^N \times \R^N} f_{0,\e} |v|^2 \varphi_R(v)\,dvdx \to \iint_{\T^N \times \R^N} f_0 |v|^2\,dvdx.
\]
We also use the weak-$\star$ convergence $f_{\epsilon,\delta} \rightharpoonup f$ in $L^\infty(\T^N \times \R^N \times (0,T))$ to obtain  	
\[
 \iint_{\T^N \times \R^N} f_{\e,\delta} |v|^2 \varphi_R(v)\,dvdx \to  \iint_{\T^N \times \R^N} f |v|^2 \varphi_R(v)\,dvdx
\]
as $\e, \delta \to 0$. Then by employing almost the same argument as the above, we have
\[
\lt|\iint_{\T^N \times \R^N} f |v|^2 \lt(\varphi_R(v)-  1\rt)\,dvdx\rt| \leq \frac{c_\varphi}{R}\iint_{\T^N \times \R^N} f |v|^3 \,dvdx \leq  \frac{C c_\varphi}{R} \to 0
\]
as $R \to \infty$, where $C>0$ is independent of $R$.

 	%%%%%%%%%%%%%%%%%%%%%%%%%%%%%%%%%%%%%%%%%%%%%%%%%%%%%%%%
	%
	%
	%
	%
	%
	%
	%
	%
	%%%%%%%%%%%%%%%%%%%%%%%%%%%%%%%%%%%%%%%%%%%%%%%%%%%%%%%%	
\subsubsection{Boltzmann's $H$-theorem}
We first notice that it is not clear how to derive the entropy inequality via the weak formulation in Definition \ref{def1.1} since our weak solution is not regular enough. For that reason, we recall the following regularized and linearized equation:
	\begin{align}
		\label{eqn_reg_r}
		\partial_tf^{n+1}_{\e,\delta}+v\cdot\nabla_xf^{n+1}_{\e,\delta}  =\nabla_v\cdot\lt(T_{f^n_{\e,\delta}}^{\epsilon, \delta}\nabla_vf^{n+1}_{\e,\delta}+\lt(v-u_{f^n_{\e,\delta}}^{\epsilon}\rt)f^{n+1}_{\e,\delta}\rt).
	\end{align}
Note that for fixed $\e, \delta>0$, $T_{f^n_{\e,\delta}}^{\e, \delta}$ is bounded from below by some positive constant due to Lemma \ref{lem_low}. 
Due to the strong regularities of $T_{f^n_{\e,\delta}}^{\e, \delta}$ and $u_{f^n_{\e,\delta}}^\epsilon$, and the initial data $f_{0,\e}$, we can  construct a sufficiently regular solution $f^{n+1}_{\e,\delta}$ to the equation \eqref{eqn_reg_r}. Then, by noticing that the equation \eqref{eqn_reg_r} can be written as
\[
\partial_tf^{n+1}_{\e,\delta}+v\cdot\nabla_xf^{n+1}_{\e,\delta}=\nabla_v\cdot\lt(T^{\e,\delta}_{f^n_{\e,\delta}}f^{n+1}_{\e,\delta}\nabla_v\ln\biggl(\frac{f^{n+1}_{\e,\delta}}{M^{\e,\delta}_{f^n_{\e,\delta}}}\biggl) \rt),
\]
where
 \[
 M^{\e,\delta}_{f}
	=\frac{\rho_f}{(2\pi T^{\e,\delta}_f)^\frac{N}{2}}\exp\lt(-\frac{|v-u^\epsilon_f|^2}{2T^{\e,\delta}_f}\rt),
\]
we estimate 
\begin{align*}	
\frac{d}{dt}\iint_{\T^N\times \R^N}f^{n+1}_{\e,\delta}\ln f^{n+1}_{\e,\delta} \,dvdx
	&=\iint_{\T^N\times \R^N}\nabla_v\cdot\left\{T^{\e,\delta}_{f^n_{\e,\delta}}f^{n+1}_{\e,\delta}\nabla_v\ln(f^{n+1}_{\e,\delta}/M^{\e,\delta}_{f^n_{\e,\delta}}) \right\}\ln f^{n+1}_{\e,\delta} \,dvdx\cr
	&=-\iint_{\T^N\times \R^N}T^{\e,\delta}_{f^n_{\e,\delta}}f^{n+1}_{\e,\delta}\left|\nabla_v\ln(f^{n+1}_{\e,\delta}/M^{\e,\delta}_{f^n_{\e,\delta}}) \right|^2 dvdx\cr
	&\quad-\iint_{\T^N\times \R^N}\nabla_v\cdot\left\{T^{\e,\delta}_{f^n_{\e,\delta}}\nabla_vf^{n+1}_{\e,\delta}+(v-u^\epsilon_{f^{n+1}_{\e,\delta}})f^{n+1}_{\e,\delta} \right\}\frac{|v-u^\epsilon_{f^n_{\e,\delta}}|^2}{2T^{\e,\delta}_{f^n_{\e,\delta}}} \,dvdx,
\end{align*} 
where the second term on the last identity is
\begin{align*}
	&-\iint_{\T^N\times \R^N}\nabla_v\cdot\left\{T^{\e,\delta}_{f^n_{\e,\delta}}\nabla_vf^{n+1}_{\e,\delta}+(v-u^\epsilon_{f^n_{\e,\delta}})f^{n+1}_{\e,\delta} \right\}\frac{|v-u^\epsilon_{f^n_{\e,\delta}}|^2}{2T^{\e,\delta}_{f^n_{\e,\delta}}} \,dvdx\cr
	&\quad =\iint_{{\mathbb{T}^N}\times\R^N}\left\{T^{\e,\delta}_{f^n_{\e,\delta}}\nabla_vf^{n+1}_{\e,\delta}+(v-u^\epsilon_{f^n_{\e,\delta}})f^{n+1}_{\e,\delta} \right\}\cdot\frac{v-u^\epsilon_{f^n_{\e,\delta}}}{T^{\e,\delta}_{f^n_{\e,\delta}}}\,dvdx\cr
	&\quad =-N\int_{{\mathbb{T}^N}}\rho_{f^{n+1}_{\e,\delta}}\,dx
	+\int_{{\mathbb{T}^N}}\frac{1}{T^{\e,\delta}_{f^n_{\e,\delta}}}\int_{{\mathbb{R}^N}}|v-u^\epsilon_{f^n_{\e,\delta}}|^2f^{n+1}_{\e,\delta}\,dvdx.
\end{align*}
Thus, we obtain
\begin{align}\label{h-thm-main0}
\begin{aligned}
&\iint_{\T^N\times \R^N}f^{n+1}_{\e,\delta}(t)\ln f^{n+1}_{\e,\delta}(t) \,dvdx  - \iint_{\T^N\times \R^N}f_{0,\e}\ln f_{0,\e} \,dvdx  \cr
&\quad \leq \int_{0}^{t}\int_{{\mathbb{T}^N}} \lt(-N\rho_{f^{n+1}_{\e,\delta}} + \frac{1}{T^{\e,\delta}_{f^n_{\e,\delta}}}\int_{{\mathbb{R}^N}}|v-u^\epsilon_{f^n_{\e,\delta}}|^2f^{n+1}_{\e,\delta}\,dv\rt) dxds.
\end{aligned}
\end{align}
Now we want to send $n \to \infty$ in \eqref{h-thm-main0}. Note that for fixed $\e,\delta > 0$, we have strong convergences (see Section \ref{ssec_cauchy}):
\[f^n_{\e,\delta} \to f_{\e,\delta} \quad \mbox{in } L^{\infty}(0,T;L_q^2(\mathbb{T}^N\times\mathbb{R}^N)), \qquad  \rho_{f^n_{\e,\delta}} \to \rho_{f_{\e,\delta}} \quad \mbox{in } L^{\infty}(0,T;L^2(\mathbb{T}^N)),
\] 
\[u_{f^n_{\e,\delta}}^{\e}\rightarrow u_{f_{\e,\delta}}^\e, \quad \mbox{and} \quad T^{\e,\delta}_{f^n_{\e,\delta}} \to T^{\e,\delta}_{f_{\e,\delta}} \quad \mbox{in } L^\infty(\T^N \times (0,T))
\]
and by Lemma \ref{lem_low}, $T^{\e,\delta}_{f^n_{\e,\delta}}$ is uniformly-in-$n$ bounded from below.
This yields that the right-hand side of \eqref{h-thm-main0} converges to
\[
\int_{0}^{t}\int_{{\mathbb{T}^N}} \lt(-N\rho_{f_{\e,\delta}} + \frac{1}{T^{\e,\delta}_{f_{\e,\delta}}}\int_{{\mathbb{R}^N}}|v-u^\epsilon_{f_{\e,\delta}}|^2f_{\e,\delta}\,dv\rt) dxds
\]
as $n \to \infty$. For the convergence of the left-hand side of \eqref{h-thm-main0}, we use the fact that $f \mapsto f \ln f$ is convex and $f^n_{\e,\delta} \to f_{\e,\delta}$ pointwise a.e. to apply the lower semi-continuity:
\[
 \intrr f_{\e,\delta} \ln f_{\e,\delta} \,dvdx \leq \liminf_{n \to \infty} \intrr f^{n+1}_{\e,\delta}\ln f^{n+1}_{\e,\delta} \,dvdx.
\]
Hence, we find
\begin{align}\label{h-thm-main}
\begin{aligned}
&\iint_{\T^N\times \R^N}f_{\e,\delta}(t)\ln f_{\e,\delta}(t) \,dvdx  - \iint_{\T^N\times \R^N}f_{0,\e}\ln f_{0,\e} \,dvdx  \cr
&\quad \leq \int_{0}^{t}\int_{{\mathbb{T}^N}} \lt(-N\rho_{f_{\e,\delta}} + \frac{1}{T^{\e,\delta}_{f_{\e,\delta}}}\int_{{\mathbb{R}^N}}|v-u^\epsilon_{f_{\e,\delta}}|^2f_{\e,\delta}\,dv\rt) dxds.
\end{aligned}
\end{align}
We then claim that the right-hand side of the above inequality goes to zero as $\e,\delta \to 0$.  For this, we first send $\e \to 0$ for fixed $\delta >0$. By direct calculation, we get
\begin{align*}%\label{dcal}
		\int_{{\mathbb{R}^N}}|v-u_{f_{\e,\delta}}^{\e}|^2f_{\e,\delta}\,dv
		=&\int_{{\mathbb{R}^N}}(|v|^2-2v\cdot u_{f_{\e,\delta}}^\e+|u_{f_{\e,\delta}}^\e|^2)f_{\e,\delta}\,dv\cr
		=&\int_{{\mathbb{R}^N}}|v|^2f_{\e,\delta}\,dv
		-2\int_{{\mathbb{R}^N}}v\cdot u_{f_{\e,\delta}}^\e f_{\e,\delta}\,dv
		+\int_{{\mathbb{R}^N}}|u_{f_{\e,\delta}}^\e|^2f_{\e,\delta}\,dv\cr
		=&N\rho_{f_{\e,\delta}}T_{f_{\e,\delta}}+\rho_{f_{\e,\delta}}|u_{f_{\e,\delta}}|^2
		-2\rho_{f_{\e,\delta}} u_{f_{\e,\delta}}\cdot u_{f_{\e,\delta}}^\e
		+|u_{f_{\e,\delta}}^\e|^2\rho_{f_{\e,\delta}}.
\end{align*}
Since we have considered a periodic spatial domain, \eqref{pconv} yields 
\begin{align}\label{dcal1}
\begin{aligned}
	&N\rho_{f_{\e,\delta}}T_{f_{\e,\delta}}+\rho_{f_{\e,\delta}}|u_{f_{\e,\delta}}|^2
	-2\rho_{f_{\e,\delta}} u_{f_{\e,\delta}}\cdot u_{f_{\e,\delta}}^\e
	+|u_{f_{\e,\delta}}^\e|^2\rho_{f_{\e,\delta}}\cr
	&\quad\rightarrow
	N\rho_{f_\delta}T_{f_\delta}+\rho_{f_\delta}|u_{f_\delta}|^2
	-2\rho_{f_\delta} u_{f_\delta}\cdot u_{f_\delta}
	+|u_{f_\delta}|^2\rho_{f_\delta}\cr
	&\quad=N\rho_{f_\delta}T_{f_\delta}
\end{aligned}
\end{align}
in $L^1(E_\delta)$ as $\e\rightarrow 0$, where $E_\delta$ is appeared in \eqref{e_delta}. We next notice that Lemma \ref{lem4.2} gives
\begin{align}
	\label{teda.e.}
	T_{f_{\e,\delta}}^{\e,\delta}
	=\frac{\Phi_{f_{\e,\delta}}^{\e,\delta}+\delta^2}{N\rho_{f_{\e,\delta}}*\theta_\epsilon+\delta\big( 1+\Phi_{f_{\e,\delta}}^{\e,\delta} \big)}
	\rightarrow
	\frac{\Phi_{f_\delta}^\delta+\delta^2}{N\rho_{f_\delta}+\delta\big( 1+\Phi_{f_\delta}^\delta \big)}
	=T_{f_\delta}^\delta
	\quad
	\mbox{a.e. on $E_\delta$},
\end{align}
as $\e \to 0$, where
\begin{align*}
	\Phi_{f_\delta}^\delta
	=N\rho_{f_\delta}T_{f_\delta}+\rho_{f_\delta}|u_{f_\delta}|^2 - \frac{\left| \rho_{f_\delta}u_{f_\delta}\right|^2}{\rho_{f_\delta}+\delta( 1+| \rho_{f_\delta}u_{f_\delta}|^2)  }.
\end{align*}
Since $\Phi_{f_\delta}^\delta \geq N\rho_{f_\delta}T_{f_\delta}$, we deduce 
\[
T_{f_\delta}^\delta \geq \frac{N\rho_{f_\delta}T_{f_\delta}+\delta^2}{N\rho_{f_\delta}+\delta( 1+ N\rho_{f_\delta}T_{f_\delta} )}.
\]
Moreover, by Lemma \ref{lemma2.1}, we get
\[
\rho_{f_\delta}\leq C\|f_\delta\|_{L^{\infty}}({T_{f_\delta}})^{\frac{N}{2}} \leq C({T_{f_\delta}})^{\frac{N}{2}}
\]
and subsequently, 
\[
\rho_{f_\delta}\leq C(\rho_{f_\delta}{T_{f_\delta}})^{\frac{N}{N+2}}\leq C(\rho_{f_\delta}{T_{f_\delta}} + 1)
\]
for some $C>0$ independent of $\delta$. This yields
\bq\label{lo_bdd_td}
T_{f_\delta}^\delta \geq \frac{N\rho_{f_\delta}T_{f_\delta}+\delta^2}{C( 1+ N\rho_{f_\delta}T_{f_\delta} )} \geq \frac\delta{C} 	\quad
	\mbox{a.e.}
\eq
due to $\delta < 1$. Then, we combine the above with \eqref{dcal1} and \eqref{teda.e.} to obtain
\[
\frac{1}{T^{\e,\delta}_{f_{\e,\delta}}}\int_{{\mathbb{R}^N}}|v-u^\epsilon_{f_{\e,\delta}}|^2f_{\e,\delta}\,dv \to \frac{N\rho_{f_\delta}T_{f_\delta}}{T_{f_\delta}^\delta} 
\]
in $L^1(E_\delta)$ as $\e \to 0$. Thus, we have
\begin{align*}
&\iint_{E_\delta} \biggl(-N \rho_{f_{\e,\delta}} + \frac{1}{T^{\e,\delta}_{f_{\e,\delta}}}\int_{{\mathbb{R}^N}}|v-u^\epsilon_{f_{\e,\delta}}|^2f_{\e,\delta}\,dv\biggl)dxds \to \iint_{E_\delta} \biggl(-N \rho_{f_\delta} + \frac{1}{T^\delta_{f_\delta}}\int_{{\mathbb{R}^N}}|v-u_{f_\delta}|^2f_\delta\,dv\biggl)dxds
\end{align*}
as $\e \to 0$.  On the other hand, we observe that  for $p\in [1,\frac{3N+9}{3N+8})$
\begin{align*}
	&\iint_{(\T^N \times (0,T))\setminus E_\delta}\rho_{f_{\e,\delta}}  |u^\e_{f_{\e,\delta}} |^2\,dxdt\cr
	&\quad \leq C\left(\iint_{(\T^N \times (0,T))\setminus E_\delta}\rho_{f_{\e,\delta}}^p |u^\e_{f_{\e,\delta}}|^{2p}\, dxdt\right)^{\frac{1}{p}}\cr
	&\quad \leq C \left(\iint_{(\T^N \times (0,T))\setminus E_\delta}\rho_{f_{\e,\delta}}^{\frac{p}{3-2p}} \,dxdt\right)^{\frac{3-2p}{3p}}
	\left(\iint_{(\T^N \times (0,T))\setminus E_\delta}\rho_{{f_{\e,\delta}}}|u^\e_{f_{\e,\delta}}|^{3}\,dxdt\right)^{\frac{2}{3}},
\end{align*}
where $\frac{p}{3-2p}<\frac{N+3}{N+2}$. Thus, it follows from Proposition \ref{prop3.1}, \eqref{4.9}, and \eqref{pconv} that
\begin{align*}
	\iint_{(\T^N \times (0,T))\setminus E_\delta}\rho_{f_{\e,\delta}} |u^\e_{f_{\e,\delta}}|^2\,dxdt
	\leq C\left\|\rho_{{f_{\e,\delta}}}\right\|_{L^{\frac{p}{3-2p}}((\T^N \times (0,T))\setminus E_\delta)}^{\frac{1}{3}} \to 0 %	\left\|\rho_{{f_{\e,\delta}}}^{\frac{1}{3}}|u^\e_{f_{\e,\delta}}|\right\|_{L^3}^{\frac{1}{2}}	\to	0
\end{align*}
as $\e\to 0$. Similarly, we obtain
\[
\iint_{(\T^N \times (0,T))\setminus E_\delta}\rho_{f_{\e,\delta}} |u_{f_{\e,\delta}}|^2\,dxdt \to 0
\]
and
\[
\iint_{(\T^N \times (0,T))\setminus E_\delta} \rho_{f_{\e,\delta}}T^{\epsilon,\delta}_{f_{\e,\delta}}\,dxdt \to 0
\]
as $\e \to 0$, see also \eqref{tde_con}. Thus, we obtain
\begin{align*}
&\iint_{(\T^N \times (0,T))\setminus E_\delta} N\rho_{f_{\e,\delta}}T^{\epsilon,\delta}_{f_{\e,\delta}}+\rho_{f_{\e,\delta}}|u_{f_{\e,\delta}}-u_{f_{\e,\delta}}^\e|^2 \,dxdt\cr
	&\quad \leq  2\iint_{(\T^N \times (0,T))\setminus E_\delta} N\rho_{f_{\e,\delta}}T^{\epsilon,\delta}_{f_{\e,\delta}}+\rho_{f_{\e,\delta}}|u_{f_{\e,\delta}}|^2
		+|u_{f_{\e,\delta}}^\e|^2\rho_{f_{\e,\delta}} \,dxdt\cr
		&\quad \to 0
\end{align*}
as $\e \to 0$. This together with \eqref{lo_bdd_td} yields
\[
\iint_{(\T^N \times (0,T))\setminus E_\delta} \biggl(-N \rho_{f_{\e,\delta}} + \frac{1}{T^{\e,\delta}_{f_{\e,\delta}}}\int_{{\mathbb{R}^N}}|v-u^\epsilon_{f_{\e,\delta}}|^2f_{\e,\delta}\,dv\biggl)dxds \to 0
\]
and subsequently,
\[
 \int_{0}^{t}\int_{{\mathbb{T}^N}} \biggl(-N\rho_{f_{\e,\delta}} + \frac{1}{T^{\e,\delta}_{f_{\e,\delta}}}\int_{{\mathbb{R}^N}}|v-u^\epsilon_{f_{\e,\delta}}|^2f_{\e,\delta}\,dv\biggl) dxds \to  \int_{0}^{t}\int_{{\mathbb{T}^N}} \biggl(-N\rho_{f_\delta} + \frac{1}{T^\delta_{f_\delta}}\int_{{\mathbb{R}^N}}|v-u_{f_\delta}|^2f_\delta\,dv\biggl) dxds
\]
as $\e \to 0$. We finally send $\delta \to 0$. We again use
\begin{align*}
	\Phi_{f_\delta}^\delta
	=N\rho_{f_\delta}T_{f_\delta}+\rho_{f_\delta}|u_{f_\delta}|^2 - \frac{\left| \rho_{f_\delta}u_{f_\delta}\right|^2}{\rho_{f_\delta}+\delta ( 1+| \rho_{f_\delta}u_{f_\delta}|^2 )  }
	\geq N\rho_{f_\delta}T_{f_\delta}
\end{align*}
to get 
\begin{align*}
	\frac{N\rho_{f_\delta}T_{f_\delta}}{T^\delta_{{f_\delta}}}
	\leq\frac{N\rho_{{f_\delta}}+\delta\big( 1+N \rho_{{f_\delta}} T_{{f_\delta}} \big)}{N \rho_{{f_\delta}} T_{{f_\delta}}+\delta^2}N\rho_{f_\delta}T_{f_\delta}
	\leq N\rho_{{f_\delta}}+\delta\big( 1+N \rho_{{f_\delta}} T_{{f_\delta}} \big).
\end{align*}
Observe that 
\begin{align*}
	&N\rho_{{f_\delta}}T_{f_\delta}
	%=\int_{{\mathbb{R}^N}}|v|^2f_\delta\,dv-\rho_{f_\delta}\left|u_{f_\delta}\right|^2
	=\int_{{\mathbb{R}^N}}|v|^2f_\delta\,dv-\frac{\left|\rho_{f_\delta}u_{f_\delta}\right|^2}{\rho_{f_\delta}}
\to
	\int_{{\mathbb{R}^N}}|v|^2f\,dv-\frac{\left|\rho_{f}u_{f}\right|^2}{\rho_{f}}
	%=\int_{{\mathbb{R}^N}}|v|^2f\,dv-\rho_{f}\left|u_{f}\right|^2
	=N\rho_{{f}}T_{f}
	\quad
	\mbox{a.e.}
\end{align*}
and
\begin{align*}
	T_{f_\delta}^\delta=
	\frac{\Phi_{{f_\delta}}^\delta+\delta^2}{N\rho_{{f_\delta}}+\delta\big( 1+\Phi_{{f_\delta}}^\delta\big) }
	\rightarrow 
	\frac{N\rho_{f}T_{f}}{N\rho_{f}}
	=T_f
	\quad
	\mbox{a.e.}
\end{align*}
on a set $E$ defined as in \eqref{de_ee}. This, together with using the facts that $\displaystyle\rho_{f}\in L^1\big(\T^N\times(0,T)\big)$ and
\begin{align*}
	N\rho_{{f_\delta}}+\delta\big( 1+N \rho_{{f_\delta}} T_{{f_\delta}} \big)
	\to
	N\rho_f
	\quad
	\mbox{a.e.}
\end{align*}
as $\delta\rightarrow 0$, enables us to use the generalized Lebesgue-dominated convergence theorem to get
\begin{align*}
 \iint_{E} \lt(-N\rho_{f_\delta} + \frac{1}{T^\delta_{f_\delta}}\int_{{\mathbb{R}^N}}|v-u_{f_\delta}|^2f_\delta\,dv\rt) dxds
	&\rightarrow
	 \iint_{E} \lt(-N \rho_{f} + \frac{N\rho_fT_f}{T_{f}}\rt)dxds = 0
\end{align*}
as $\delta\rightarrow 0$. On the other hand, for the estimate on $(\T^N \times (0,T))\setminus E$, we find
\begin{align*}
	0
	\leq\iint_{(\T^N \times (0,T)) \setminus E}\frac{N\rho_{f_\delta}T_{f_\delta}}{T^\delta_{{f_\delta}}}\,dxdt
	&=\iint_{(\T^N \times (0,T)) \setminus E}\frac{N\rho_{f_\delta}+\delta(1+\Phi_{f_\delta}^\delta)}{\Phi_{f_\delta}^\delta+\delta^2}N\rho_{f_\delta}T_{f_\delta}\,dxdt\cr
	&\leq \iint_{(\T^N \times (0,T)) \setminus E}\frac{N\rho_{f_\delta}+\delta(1+\Phi_{f_\delta}^\delta)}{N\rho_{f_\delta}T_{f_\delta}}N\rho_{f_\delta}T_{f_\delta}\,dxdt\cr
	&=\iint_{(\T^N \times (0,T)) \setminus E}N\rho_{f_\delta}+\delta(1+\Phi_{f_\delta}^\delta)\,dxdt.
\end{align*}
Since
\begin{align*}
	\Phi_{f_\delta}^\delta =N\rho_{{f_\delta}}T_{{f_\delta}}+\rho_{{f_\delta}}|u_{{f_\delta}}|^2 - \frac{\left| \rho_{{f_\delta}}u_{{f_\delta}}\right|^2}{\rho_{{f_\delta}}+\delta( 1+| \rho_{{f_\delta}}u_{{f_\delta}}|^2)  }
	\leq N\rho_{{f_\delta}}T_{{f_\delta}}+\rho_{{f_\delta}}|u_{{f_\delta}}|^2,
\end{align*}
we get
\begin{align*}
	\iint_{(\T^N \times (0,T)) \setminus E}N\rho_{f_\delta}+\delta(1+\Phi_{f_\delta}^\delta)\,dxdt
	\rightarrow
	\iint_{(\T^N \times (0,T)) \setminus E}N\rho_f\,dxdt
	=0.
\end{align*}
Thus, we have
\begin{align*}
	\iint_{(\T^N \times (0,T)) \setminus E}\frac{N\rho_{f_\delta}T_{f_\delta}}{T^\delta_{{f_\delta}}}\,dxdt
	\rightarrow0
\end{align*}
as $\delta\to 0$, and this yields that the right-hand side of \eqref{h-thm-main} coverges to zero as $\e,\delta \to 0$, i.e., 
\bq\label{entropy}
\lim_{\delta \to 0} \lim_{\e \to 0}\iint_{\T^N\times \R^N}f_{\e,\delta}\ln f_{\e,\delta} \,dvdx \leq   \lim_{\e \to 0} \iint_{\T^N\times \R^N}f_{0,\e}\ln f_{0,\e} \,dvdx .
\eq
Moreover, we notice that $f \mapsto f \ln f$ is convex and
\[
f_{\e,\delta} \rightharpoonup f \quad \mbox{in $L^p(\T^N \times \R^N \times (0,T))$ for any $p < \infty$}
\]
thus by the weak lower semi-continuity, we deduce
\[
 \intrr f \ln f \,dvdx \leq \liminf_{\e,\delta \to 0} \intrr f_{\e,\delta}\ln f_{\e,\delta} \,dvdx.
\]
This, together with \eqref{entropy}, gives
\[
 \intrr f \ln f \,dvdx \leq   \lim_{\e \to 0} \iint_{\T^N\times \R^N}f_{0,\e}\ln f_{0,\e} \,dvdx.
\]
We finally claim that 
\bq\label{fin_cla}
  \lim_{\e \to 0} \iint_{\T^N\times \R^N}f_{0,\e}\ln f_{0,\e} \,dvdx = \iint_{\T^N\times \R^N}f_0 \ln f_0 \,dvdx.
\eq
For this, we notice that for any $s,\sigma\geq 0$,
\begin{align}
	\label{nlog}
	-s(\ln s)\chi_{\left\{0\leq s\leq 1\right\}}
	&=-s(\ln s)\chi_{\left\{e^{-\sigma}\leq s\leq 1\right\}}
	-s(\ln s)\chi_{\left\{e^{-\sigma}\geq s\right\}} \leq s\sigma+C\sqrt{s}\chi_{\left\{e^{-\sigma}\geq s\right\}} \leq s\sigma+Ce^{-\frac{\sigma}{2}}
\end{align}
for some constant $C>0$ independent of $s$ and $\sigma$. Thus, by choosing $s=f_{0,\e}$ and $\sigma=|v|^2$ in \eqref{nlog}, we get
\begin{align*}
f_{0,\e}|\ln f_{0,\e}| & \leq  f_{0,\e}\ln f_{0,\e}\chi_{\left\{f_{0,\e}\geq 1\right\}}  -  f_{0,\e}\ln f_{0,\e}\chi_{\left\{0\leq f_{0,\e}\leq 1\right\}}  \cr
& \leq  f_{0,\e}^2  +   f_{0,\e}|v|^2+Ce^{-\frac{|v|^2}{2}} =: g_\e.
\end{align*}
Note that 
\[
f_{0,\e}\ln f_{0,\e} \to f_0 \ln f_0 \quad \mbox{and}\quad g_\e \to f_0^2 + f_0|v|^2 +Ce^{-\frac{|v|^2}{2}} \quad \mbox{pointwise a.e.}
\]
as $\e  \to 0$, and
\[
 \lim_{\e  \to 0}  \intrr g_\e\,dvdx = \intrr f_0^2 + f_0|v|^2 +Ce^{-\frac{|v|^2}{2}}\,dvdx.
\]
Hence, by the generalized Lebesgue-dominated convergence theorem, we obtain \eqref{fin_cla} and conclude the desired entropy inequality:
\[
 \intrr f \ln f \,dvdx \leq  \intrr f_0 \ln f_0 \,dvdx.
\]

\appendix

%%%%%%%%%%%%%%%%%%%%%%%%%%%%%%%%%%%%%%%%%%%%%%%%
\section{Proof of Lemma \ref{lq2}}\label{app_a}
%\counterwithout*{equation}{section}
In this appendix, we provide the rest of the proof of Lemma \ref{lq2}, i.e. 
\begin{align}
\begin{split}
\label{derv}
&\frac{d}{dt}\left(\|\nabla_{x}f^{n+1}(t)\|_{L_q^2}^2
+\|\nabla_{v}f^{n+1}(t)\|_{L_q^2}^2\right)
+c_{\epsilon,\delta}
\|\nabla_{x}\nabla_{v}f^{n+1}\|_{L_q^2}^2 \cr
&\quad \leq C_{\e,\delta,q,N}
\left(\|\nabla_{x}f^{n+1}(t)\|_{L_q^2}^2
+\|\nabla_{v}f^{n+1}(t)\|_{L_q^2}^2\right).
\end{split}
\end{align}
To this end, we follow the same line as in the proof of Lemma \ref{lq2}. Observe that
\begin{align*}
	&{d\over dt}\iint_{\mathbb{T}^N\times\mathbb{R}^N}(1+|v|^q)|\nabla_{x}f^{n+1}|^2~dvdx\cr
	&\quad=2\iint_{\mathbb{T}^N\times\mathbb{R}^N}(1+|v|^q)\nabla_{x}f^{n+1}\cdot\Big(\nabla_{x}\Big[ T_{f^n}^{\epsilon,\delta}\Delta_{v}f^{n+1}+Nf^{n+1}-\left( u_{f^n}^{\epsilon}-v\right)\cdot \nabla_{v}f_{\epsilon}^{n+1}\Big]\Big)\cr
%	&+p\iint_{\mathbb{T}^N\times\mathbb{R}^N}(1+|v|^q)|\nabla_{x}f^{n+1}|^{p-2}\nabla_{x}f^{n+1}\cdot\Big(\nabla_{x}(\nabla_xK^\epsilon*\rho_\epsilon^n)\nabla_{v}f^{n+1}+(\nabla_xK^\epsilon*\rho_\epsilon^n)\nabla_{x}\nabla_{v}f^{n+1}\Big)~dvdx
	&\quad=2\iint_{\mathbb{T}^N\times\mathbb{R}^N}(1+|v|^q)\nabla_{x}f^{n+1}\cdot\Big[T_{f^n}^{\epsilon,\delta}\nabla_{x}\Delta_{v}f^{n+1}+\nabla_{x}T_{f^n}^{\epsilon,\delta}\Delta_{v}f^{n+1}\Big]~dvdx\cr
	&\qquad-2\iint_{\mathbb{T}^N\times\mathbb{R}^N}(1+|v|^q)\nabla_{x}f^{n+1}\cdot\Big[\nabla_{x} u_{f^n}^{\epsilon} \nabla_{v}f^{n+1}\Big]~dvdx\cr
	&\qquad+2\iint_{\mathbb{T}^N\times\mathbb{R}^N}(1+|v|^q)\nabla_{x}f^{n+1}\cdot\Big[N\nabla_{x}f^{n+1}-(u_{f^n}^{\epsilon}-v)\cdot\nabla_{x}\nabla_{v}f^{n+1} \Big]~dvdx\cr
%	&+2\iint_{\mathbb{T}^N\times\mathbb{R}^N}(1+|v|^q)|\nabla_{x}f^{n+1}|^{p-2}\nabla_{x}f^{n+1}\cdot\Big[\nabla_{x}(\nabla_xK^\epsilon*\rho_\epsilon^n)\nabla_{v}f^{n+1}+(\nabla_xK^\epsilon*\rho_\epsilon^n)\nabla_{x}\nabla_{v}f^{n+1} \Big]~dvdx
	&\quad=: \rom{1}+\rom{2}+\rom{3}+\rom{4}+\rom{5}
\end{align*}
and
\begin{align*}
	&{d\over dt}\iint_{\mathbb{T}^N\times\mathbb{R}^N}(1+|v|^q)|\nabla_{v}f^{n+1}|^2~dvdx\cr
	&\quad=2\iint_{\mathbb{T}^N\times\mathbb{R}^N}(1+|v|^q)\nabla_{v}f^{n+1}\cdot\Big(\nabla_{v}\Big[ T_{f^n}^{\epsilon,\delta}\Delta_{v}f^{n+1}+Nf^{n+1}\Big]-\nabla_{x}f^{n+1}\Big)~dvdx\cr
	&\qquad-2\iint_{\mathbb{T}^N\times\mathbb{R}^N}(1+|v|^q)\nabla_{v}f^{n+1}\cdot\nabla_{v}\Big[(u_{f^n}^{\epsilon}-v)\cdot\nabla_{v}f^{n+1}\Big]~dvdx\cr
	%\\&-p\iint_{\mathbb{T}^N\times\mathbb{R}^N}(1+|v|^q)|\nabla_vf^{n+1}|^{p-2}\nabla_vf^{n+1}\cdot d\nabla_{x}f^{n+1}~dvdx\\
	%&-p\iint_{\mathbb{T}^N\times\mathbb{R}^N}(1+|v|^q)|\nabla_vf^{n+1}|^{p-2}\nabla_vf^{n+1}\cdot\nabla_{x}f^{n+1}~dvdx
	&\quad=: \rom{6} + \rom{7} + \rom{8} + \rom{9}.
\end{align*}
	$\bullet$ Estimate of $\rom{1}$:
	It follows from  integration by parts and Lemma \ref{lem_low} that
	\begin{align}\label{I}\begin{split}
		 \rom{1} 
	&=-\iint_{\mathbb{T}^N\times\mathbb{R}^N}qv|v|^{q-2}\cdot\nabla_{v}|\nabla_{x}f^{n+1}|^2T_{f^n}^{\epsilon,\delta}~dvdx-2\iint_{\mathbb{T}^N\times\mathbb{R}^N}(1+|v|^q)T_{f^n}^{\epsilon,\delta}|\nabla_{x}\nabla_{v}f^{n+1}|^2~dvdx\cr
		&=\iint_{\mathbb{T}^N\times\mathbb{R}^N}q(N+q-2)|v|^{q-2}|\nabla_{x}f^{n+1}|^2T_{f^n}^{\epsilon,\delta}~dvdx-2\iint_{\mathbb{T}^N\times\mathbb{R}^N}(1+|v|^q)T_{f^n}^{\epsilon,\delta}|\nabla_{x}\nabla_{v}f^{n+1}|^2~dvdx\cr
		&\le C_{q,N}\|T_{f^n}^{\epsilon,\delta}\|_{L^{\infty}}\iint_{\mathbb{T}^N\times\mathbb{R}^N}(1+|v|^q)|\nabla_{x}f^{n+1}|^2~dvdx-c_{\epsilon,\delta}\iint_{{\mathbb{R}^N}\times\mathbb{R}^N}(1+|v|^q)|\nabla_{x}\nabla_{v}f^{n+1}|^2~dvdx\cr
		&\leq C_{\epsilon,\delta,q,N}\|\nabla_{x}f^{n+1}\|_{L_{q}^2}^2
		-c_{\epsilon,\delta}\|\nabla_{x}\nabla_{v}f^{n+1}\|_{L_{q}^2}^2.
\end{split}	\end{align}
	$\bullet$ Estimate of $\rom{2}$:
	We use integration by parts, H\"older's inequality and Young's inequality to obtain
	\begin{flalign*}
		\rom{2}
		&=-2\iint_{\mathbb{T}^N\times\mathbb{R}^N}qv|v|^{q-2}\left( \nabla_{x}f^{n+1}\cdot\nabla_{x}T_{f^n}^{\epsilon,\delta}\right)\cdot\nabla_{v}f^{n+1}~dvdx\cr
		&\quad -2\iint_{\mathbb{T}^N\times\mathbb{R}^N}(1+|v|^q)\left( \nabla_{v}\nabla_{x}f^{n+1}\nabla_{x}T_{f^n}^{\epsilon,\delta}\right)\cdot\nabla_{v}f_\epsilon^{n+1}~dvdx\cr
		&\leq C_q\|\nabla_{x}T_{f^n}^{\epsilon,\delta}\|_{L^{\infty}}\iint_{\mathbb{T}^N\times\mathbb{R}^N}(1+|v|^q)|\nabla_{x}f^{n+1}||\nabla_{v}f^{n+1}||\nabla_{x} ~dvdx\cr
		&\quad +2\|\nabla_{x}T_{f^n}^{\epsilon,\delta}\|_{L^{\infty}}\iint_{\mathbb{T}^N\times\mathbb{R}^N}(1+|v|^q)| \nabla_{v}\nabla_{x}f^{n+1}| |\nabla_{v}f_\epsilon^n|~dvdx\cr
		&\leq C_{\epsilon,\delta,q}\left(  \|\nabla_{x}f^{n+1}\|^2_{L_{q}^2}+\|\nabla_{v}f^{n+1}\|^2_{L_q^2}
		+\gamma  \|\nabla_{x}\nabla_{v}f^{n+1}\|_{L_q^2}^2
		+C_{\gamma}\|\nabla_{v}f^{n+1}\|_{L_q^2}^2\right),
	\end{flalign*}
	where $\gamma$ will be chosen to be small enough so that the second order-derivative term can be absorbed into the last inequality of \eqref{I}.
	\noindent\newline
	$\bullet$ Estimate of $\rom{3}$: Using H\"older's inequality and Young's inequality, we get
	\begin{flalign*}
		\rom{3}
	&\leq 2\|\nabla_{x} u_{f^n}^{\epsilon}\|_{L^{\infty}}\iint_{\mathbb{T}^N\times\mathbb{R}^N}(1+|v|^q)|\nabla_{x}f^{n+1}||\nabla_{v}f^{n+1}|~dvdx \leq C_\e\|\nabla_{x}f^{n+1}\|_{L_q^2}^2+C_\e\|\nabla_{v}f^{n+1}\|_{L_q^2}^2.
	\end{flalign*}
	$\bullet$ Estimate of $\rom{4}$:
	One can easily see that
	\begin{flalign*}
		\rom{4}
		= 2N\|\nabla_{x}f^{n+1}\|_{L_q^2}^2.
	\end{flalign*}
	$\bullet$ Estimate of $ \rom{5} $:
Applying integration by parts, we get
	\begin{flalign*}
		 \rom{5} 
		&=-\iint_{\mathbb{T}^N\times\mathbb{R}^N}(1+|v|^q)(u_{f^n}^{\epsilon}-v)\cdot\nabla_{v}|\nabla_{x}f^{n+1}|^2~dvdx\cr
		&=-\iint_{\mathbb{T}^N\times\mathbb{R}^N}q|v|^{q}|\nabla_{x}f^{n+1}|^2~dvdx
		+\iint_{\mathbb{T}^N\times\mathbb{R}^N}q(u_{f^n}^{\epsilon}\cdot v)|v|^{q-2}|\nabla_{x}f^{n+1}|^2~dvdx\cr
		&\quad  -N\iint_{\mathbb{T}^N\times\mathbb{R}^N}(1+|v|^q)|\nabla_{x}f^{n+1}|^2~dvdx\cr
		&\leq q\iint_{\mathbb{T}^N\times\mathbb{R}^N}(1+|v|^q)|\nabla_{x}f^{n+1}|^2~dvdx
		+q\|u_{f^n}^{\epsilon}\|_{L^{\infty}}\iint_{\mathbb{T}^N\times\mathbb{R}^N}(1+|v|^q)|\nabla_{x}f^{n+1}|^2~dvdx\cr
		&\quad  +N\iint_{\mathbb{T}^N\times\mathbb{R}^N}(1+|v|^q)|\nabla_{x}f^{n+1}|^2~dvdx\cr
		&\leq C_{\epsilon,q,N}\|\nabla_{x}f^{n+1}\|_{L_q^2}^2.
	\end{flalign*}
	\noindent
	$\bullet$ Estimate of $\rom{6}$:  Note that
	 \begin{flalign*}
 \rom{6} 
&=-2\iint_{\mathbb{T}^N\times\mathbb{R}^N}qv|v|^{q-2}\nabla_{v}f^{n+1}: \nabla^2_{v}f^{n+1} T_{f^n}^{\epsilon,\delta}~dvdx-2\iint_{\mathbb{T}^N\times\mathbb{R}^N}(1+|v|^q)|\nabla_{v}^2f^{n+1}|^2T_{f^n}^{\epsilon,\delta}~dvdx
\end{flalign*}
where $  A:B:=\sum_{i=1}^m\sum_{j=1}^na_{ij}b_{ij}$ for $A,B\in \R^{m\times n}$.  
Since
\begin{align*}
v\nabla_{v}f: \nabla^2_{v}f&=\sum_{i=1}^N\sum_{j=1}^N v_i \pa_{v_j}f \pa_{v_i}\pa_{v_j}f=\frac 12  v  \cdot \nabla_v\{|\nabla_vf|^2\},
\end{align*}
we obtain
	 \begin{flalign*}
\rom{6}&=-q\iint_{\mathbb{T}^N\times\mathbb{R}^N}|v|^{q-2}v\cdot\nabla_v\{|\nabla_vf^{n+1}|^2\} T_{f^n}^{\epsilon,\delta}~dvdx-2\iint_{\mathbb{T}^N\times\mathbb{R}^N}(1+|v|^q)|\nabla_{v}^2f^{n+1}|^2T_{f^n}^{\epsilon,\delta}~dvdx \cr 
&\le q(N+q-2)\iint_{\mathbb{T}^N\times\mathbb{R}^N}|v|^{q-2}|\nabla_{v}f^{n+1}|^{2}T_{f^n}^{\epsilon,\delta}~dvdx\cr
&\leq  q(N+q-2)\|T_{f^n}^{\epsilon,\delta}\|_{L^{\infty}}\iint_{\mathbb{T}^N\times\mathbb{R}^N}(1+|v|^q)|\nabla_{v}f^{n+1}|^{2}~dvdx \cr
&\leq C_{\delta,q,N}\|\nabla_vf^{n+1}\|_{L_q^2}^2.
\end{flalign*}
	\noindent
$\bullet$ Estimate of $\rom{7}$ and $\rom{8}$:
It is straightforward that
	\begin{flalign*}
		\rom{7} + \rom{8} \le 2N\|\nabla_vf^{n+1}\|_{L_q^2}^2+\|\nabla_{v}f^{n+1}\|_{L_q^2}^2+\|\nabla_{x}f^{n+1}\|_{L_q^2}^2
	\end{flalign*}
	$\bullet$ Estimate of $ \rom{9} $:
	Similarly as in $ \rom{5} $, we obtain
	\begin{flalign*}
		 \rom{9} 
&=-2\iint_{\mathbb{T}^N\times\mathbb{R}^N}(1+|v|^q)\nabla_{v}f^{n+1}\cdot\Big[-\nabla_{v}f^{n+1}+\nabla_{v}^2f^{n+1}(u_{f^n}^{\epsilon}-v)\Big]~dvdx\cr
		&=2\iint_{\mathbb{T}^N\times\mathbb{R}^N}(1+|v|^q)|\nabla_{v}f^{n+1}|^2~dvdx
		+\iint_{{\mathbb{T}^N}\times\mathbb{R}^N}(q|v|^{q-2}v\cdot(u_{f^n}^{\epsilon}-v))|\nabla_{v}f^{n+1}|^{2}~dvdx\cr
		&\quad -N\iint_{\mathbb{T}^N\times\mathbb{R}^N}(1+|v|^q)|\nabla_{v}f^{n+1}|^{2}~dvdx\cr
		&\leq 2\iint_{\mathbb{T}^N\times\mathbb{R}^N}(1+|v|^q)|\nabla_{v}f^{n+1}|^2~dvdx +C_q\left(1+\|u_{f^n}^{\epsilon}\|_{L^{\infty}}\right)\iint_{\mathbb{T}^N\times\mathbb{R}^N}(1+|v|^q)|\nabla_{v}f^{n+1}|^{2}~dvdx
		\cr
%		\leq&2\iint_{\mathbb{T}^N\times\mathbb{R}^N}(1+|v|^q)|\nabla_{v}f^{n+1}|^2~dvdx +q\iint_{\mathbb{T}^N\times\mathbb{R}^N}(1+|v|^q)|\nabla_{v}f^{n+1}|^{2}~dvdx\cr
%		&+C_{\epsilon,q}\iint_{\mathbb{T}^N\times\mathbb{R}^N}(1+|v|^q)|\nabla_{v}f^{n+1}|^{2}~dvdx
%		+N\iint_{\mathbb{T}^N\times\mathbb{R}^N}(1+|v|^q)|\nabla_{v}f^{n+1}|^{2}~dvdx\cr
		&\leq C_{\epsilon,q,N}\|\nabla_vf^{n+1}\|_{L_q^2}^2. 
	\end{flalign*}
Finally, combining all the estimates gives the desired result \eqref{derv}.

%%%%%%%%%%%%%%%%%%%%%%%%%%%%%%%%%%%%%%%%%
%
%
%
%
%
%
%%%%%%%%%%%%%%%%%%%%%%%%%%%%%%%%%%%%%%%%%	

	\section*{Acknowledgments}
		Y.-P. Choi and  B.-H. Hwang were supported by National Research Foundation of Korea(NRF) grant funded by the Korean government(MSIP) (No. 2022R1A2C1002820).  Also, B.-H. Hwang was partially supported by Basic Science Research Program through the National Research Foundation of Korea(NRF) funded by the Ministry of Education(No. 2019R1A6A1A10073079).
	
	%%%%%%%%%%%%%%%%%%%%%%%%%%%%%%%%

	%	\section*{Acknowledgements}

	%%%%%%%%%%%%%%%%%%%%%%%%%%%%%%%%%%%%%%%%%%%%%%%%%%%%%%%%%%%%%%%%%%%%%%%%%%%%%%%%%
	%
	%
	%                        thebibliography
	%
	%
	%%%%%%%%%%%%%%%%%%%%%%%%%%%%%%%%%%%%%%%%%%%%%%%%%%%%%%%%%%%%%%%%%%%%%%%%%%%%%%%%%

\end{document}